%
%
%
%
\documentclass{m2an}
%
\newcommand{\mycode}{\url{https://github.com/juliocareaga/wave-heat}}

\usepackage{amsfonts}
\usepackage{mathrsfs}
\usepackage[msc-links]{amsrefs}
\usepackage{scalerel,amssymb}
\usepackage{mathtools}

\usepackage{graphicx}
\usepackage{epstopdf}
\usepackage{tcolorbox}
\usepackage{tikz}
\usepackage{pgfplots}

\usepackage{algorithmic}
\usepackage{calligra}
\usepackage{autonum}
\usepackage{hhline}
\usepackage{array}
\usepackage{diagbox}
\usepackage{mdframed}
\usepackage{multicol}

\usepackage{subcaption}
\usepackage{moreverb}
\usepackage{bbm}
\usepackage{todonotes}
\usepackage{lineno}
\usepackage{todonotes}
\usepackage{siunitx}
\usepackage{subcaption}
\usepackage{alphalph}
\usepackage{booktabs}
\usepackage{makecell}
\usepackage{transparent}

\allowdisplaybreaks
\usetikzlibrary{arrows.meta}
\definecolor{mygreen}{HTML}{43a047}
\definecolor{darkgreen}{rgb}{0,0.5,0}

\def\HonetLtwo{H^1_t(\Ltwo)}

\def\solh{u_h}

\def\Hpoldegone{H^{\poldeg+1}(\Omega)}

\def\LinftHtwo{L^\infty_t(\Htwo)}

\def\Lthree{L^3(\Omega)}

\def\LtwoLthree{L^2(\Lthree)}
\def\LinftLsix{L^\infty_t(\Lsix)}
\def\LinftHone{L^\infty_t(\Hone)}
\def\Czerooneloc{C^{0,1}_{\textup{loc}}}
\def\Coneoneloc{C^{1,1}_{\textup{loc}}}

\newcommand{\dimension}{d}

\def\gh{g_h}

\def\eps{\varepsilon}

\newcommand{\Om}{\Omega}

\newcommand{\Dh}{\Delta_h}

\newcommand{\poldeg}{\eta}
\def\CLinf{C(\Linf)}

\newcommand{\Thetaa}{\Theta_{\textup{a}}}

\newcommand{\calQ}{{\mathcal{Q}}}

\newcommand{\norm}[1]{\| #1 \|}
\newcommand{\ip}[2]{ ( #1 , #2  )_{L^2} }

\newcommand{\finalth}{t^*_h}
\newcommand{\ph}{u_h}
\newcommand{\pt}{\partial_t}
\newcommand{\ptt}{u_{tt}}
\newcommand{\uh}{u_h}
\newcommand{\ut}{u_t}
\newcommand{\utt}{u_{tt}}

\def\Vh{V_h}

\newcommand{\LtwotLp}[1]{L^2_t(L^{#1}(\Omega))}
\newcommand{\LinftLp}[1]{L^\infty_t(L^{#1}(\Omega))}

\newcommand{\delt}{\partial_t}

\newcommand{\pht}{\partial_t u_{h}}

\newcommand{\uht}{\partial_t u_{h}}
\newcommand{\uhtt}{\partial_t^2 u_{h}}
\newcommand{\uth}{\partial_t u_{h}}

\newcommand{\phih}{\phi_h}
\def\tbeta{\tilde{\beta}}

\def\Linfty{L^\infty}

\def\calN{\mathcal{N}}
\def\kW{k_{\textup{W}}}
\def\kK{k_{\textup{K}}}

\newcommand{\Rhp}{\projRitz u}
\newcommand{\Rhpt}{\partial_t \projRitz u}
\newcommand{\Rhptt}{\partial^2_t \projRitz u}
\newcommand{\Rhu}{\projRitz u}
\newcommand{\Rhut}{\partial_t \projRitz u}
\newcommand{\Rhutt}{\partial^2_t \projRitz u}

\newcommand{\errhu}{e_h^u}
\newcommand{\errhut}{\partial_t e_h^u}
\newcommand{\errhutt}{\partial^2_t e_h^u}
\newcommand{\errhtheta}{e_h^\theta}
\newcommand{\errhthetat}{\partial_t e_h^\theta}

\def\Wpolponeinf{W^{\poldeg+1, \infty}(\Omega)}
\def\LtwotWdelta{L^2_t(\Wonedelta)}
\def\LinftWonedelta{L^\infty_t(\Wonedelta)}

\def\LinftWdelta{L^\infty_t(\Wonedelta)}

\def\wtdeltatheta{{\delta}^\theta}

\def\ehp{e_h^u}

\def\ehu{e_h^u}
\def\ehtu{\partial_t e_h^u}
\def\ehttu{\partial^2_t e_h^u}
\def\ehut{\partial_t e_h^u}
\def\ehutt{\partial^2_t e_h^u}
\def\ehtheta{e_h^\theta}
\def\ehttheta{\partial_t e_h^\theta}

\def\calFh{\mathcal{F}_h}

\def\calFhu{\calFh^u}
\def\calFhtheta{\calFh^\theta}
\def\wtdeltap{{\delta}^u}

\def\LinfLtwo{L^\infty(\Ltwo)}
\def\LinftLtwo{L^\infty_t(\Ltwo)}
\def\thetat{\theta_{t}}
\def\thetatt{\theta_{tt}}
\def\thetah{\theta_h}
\def\thetaht{\partial_t \thetah}

\def\calFhthetat{\calFh^{\thetat}}
\def\Rhtheta{\projRitz \theta}
\def\Rhthetat{\partial_t \projRitz \theta}
\def\Rhthetatt{\partial^2_t \projRitz \theta}

\def\LtwoLinf{L^2(\Linf)}
\def\LtwotLinf{L^2_t(\Linf)}
\def\thetazeroh{\theta_{0h}}

\def\fp{f}
\def\fph{f_h}
\def\fu{f}
\def\fuh{f_h}

\def\uzeroh{u_{0h}}
\def\uoneh{u_{1h}}
\def\Wonedelta{W^{1, \dimension+\delta}(\Omega)}
\def\ulq{\underline{q}}

\def\calI{\mathcal{I}}
\def\Xu{\mathcal{X}_u}
\def\Xtheta{\mathcal{X}_\theta}
\def\wh{w_h}
\def\LtwotWonedelta{L^2_t(\Wonedelta)}
\def\calR{\mathcal{R}}

\def\Deltah{\Delta_h}
\newcommand{\Triag}{\mathcal{T}_h}


\newcommand{\ds}{\, \textup{d} s }

\newcommand{\intt}{\int_0^t}


\newcommand{\R}{\mathbb{R}}
\newcommand{\N}{\mathbb{N}}
\newcommand{\Ltwo}{L^2(\Omega)}
\newcommand{\Lsix}{L^6(\Omega)}
\newcommand{\Woneinf}{W^{1,\infty}(\Omega)}
\newcommand{\Linf}{L^\infty(\Omega)}
\newcommand{\Lp}[1]{L^{#1}(\Omega)}
\newcommand{\Hone}{H^1(\Omega)}
\newcommand{\Htwo}{H^2(\Omega)}
\newcommand{\Honezero}{H_0^1(\Omega)}

\newcommand{\LtwoLtwo}{L^2(L^2(\Omega))}
\newcommand{\LtwotLtwo}{L^2_t(L^2(\Omega))}

\newcommand{\LinfLinf}{L^\infty(L^\infty(\Omega))}
\newcommand{\LinftLinf}{L^\infty_t(L^\infty(\Omega))}

%

\newcommand{\projRitz}{\textup{R}_h}
\newcommand{\Ih}{\textup{I}_h}
\newcommand{\Id}{\textup{I}}

\newtheorem{theorem}{Theorem}[section]
\newtheorem{corollary}[theorem]{Corollary}
\newtheorem{lemma}[theorem]{Lemma}
\newtheorem{remark}[theorem]{Remark}
\newtheorem{proposition}[theorem]{Proposition}

\makeatletter
\newcommand{\leqnomode}{\tagsleft@true}
\newcommand{\reqnomode}{\tagsleft@false}
\makeatother

\definecolor{grey}{rgb}{0.5,0.5,0.5}

\def\Gronwall{Gr\"onwall}

\usepackage{marginnote}

\usepackage[T1]{fontenc}    
\usepackage[utf8]{inputenc} 
\usepackage[english]{babel}

\makeatletter
\def\@cite#1#2{{[#1]\if@tempswa\typeout
		{WSPC warning: optional citation argument ignored: `#2'} \fi}}

\def\@biblabel#1{[#1]}
\makeatletter

\numberwithin{equation}{section}

\begin{document}

\title{Finite element discretization\\[1mm] of nonlinear models of ultrasound heating}
\author{Julio Careaga}
\address{Bernoulli Institute, University of Groningen, Nijenborgh 9, 9747 AG Groningen, The Netherlands,\\      j.c.careaga.solis@rug.nl}
\author{Benjamin D\"orich}
\address{Institute for Applied and Numerical Mathematics, Karlsruhe Institute of Technology,\\  Englerstra{\ss}e 2, 76149 Karlsruhe, Germany,\\ benjamin.doerich@kit.edu}
\author{Vanja Nikoli\'c}
\address{Department of Mathematics, Radboud University, Heyendaalseweg 135, 6525 AJ Nijmegen, The Netherlands, \\ vanja.nikolic@ru.nl}
%
%
\begin{abstract}
	Heating generated by high-intensity focused ultrasound waves is central to many emerging medical applications, including non-invasive cancer therapy and targeted drug delivery. In this study, we aim to gain a fundamental understanding of numerical simulations in this context by analyzing conforming finite element approximations of the underlying nonlinear models that describe ultrasound-heat interactions. These models are based on a coupling of a nonlinear Westervelt--Kuznetsov acoustic wave equation to the heat equation with a pressure-dependent source term. A particular challenging feature of the system is that the acoustic medium parameters may depend on the temperature. The core of our new arguments in the \emph{a priori} error analysis lies in devising energy estimates for the coupled semi-discrete system that can accommodate the nonlinearities present in the model. To derive them, we exploit the parabolic nature of the system thanks to the strong damping present in the acoustic component. Theoretically obtained optimal convergence rates in the energy norm are confirmed by the numerical experiments. In addition, we conduct a further numerical study of the problem, where we simulate the propagation of acoustic waves in liver tissue for an initially excited profile and under high-frequency sources.
\end{abstract}
%
%
\subjclass{ 35L05, 35L72, 34A34}
\keywords{Westervelt's equation; Kuznetsov's equation; wave-heat coupling; finite element approximation; a priori analysis.}
\maketitle

\section{Introduction} \label{Sec: Introduction}

High-intensity focused ultrasound (HIFU) waves are known to act as a source of heat within the body. This heating phenomenon is at the core of many developing medical applications, including non-invasive ablation of cancer and targeted drug delivery; see, e.g.,~\cite{ter2016hifu, ellens2023high} for details. Rigorous mathematical research into the underlying (inherently nonlinear) models of wave-heat interactions has been initiated relatively recently with the contributions of, e.g.,~\cite{wilke2023p, nikolic2022local, nikolic2022westervelt}, which have investigated local and global well-posedness of the exact models. To the best of our knowledge, rigorous numerical understanding in this context is currently missing in the literature. In this work, we investigate conforming finite element approximations of the underlying models and develop a theoretical framework for their \emph{a priori} error analysis.\\
\indent Ultrasound-heat interactions present in HIFU-induced heating can be captured using a coupled system based on a damped nonlinear acoustic equation for $u$, representing either the acoustic pressure or acoustic velocity potential and the heat equation given in terms of the fluctuations $\theta$ of the ambient temperature as follows:
\begin{equation} \label{coupled_problem}
	\left\{ \begin{aligned}
		&\utt-{q(\theta)}\Delta u - \beta(\theta) \Delta \ut +  \calN(u, \ut, \utt, \nabla u, \nabla \ut, \theta)= \fu,  &&\text{in} \ \Omega \times (0,T), \\[1mm]
		& 	\theta_t -\kappa\Delta \theta+ \nu\theta = \calQ(u, \ut, \theta),  &&\text{in} \ \Omega \times (0,T),
	\end{aligned} \right.
\end{equation}
with damping coefficients $\kappa$, $\nu >0$.  Heating occurs due to the acoustic energy that is absorbed by the tissue, which is here modeled by having a pressure-dependent source term $\calQ$ in the heat equation. The acoustic medium parameters are known to depend on the temperature, resulting in the so-called \emph{thermal lensing} effect, where the focal region of the ultrasound waves may shift with changes in the temperature. In particular, this temperature dependency is seen in the {speed of sound squared $q$} and sound diffusivity $\beta$; the latter is computed using the relation
\begin{equation} \label{damping coeff}
	\begin{aligned}
		\beta \coloneqq 2\frac{\tilde{\alpha} {q^{3/2}}}{\omega^2},
	\end{aligned}
\end{equation}
where $\tilde{\alpha}\coloneqq \tilde{\alpha}(\theta)$ denotes the acoustic amplitude absorption coefficient and $\omega$ the angular frequency; see~\cite{connor2002bio}. \\
\indent  The acoustic wave equation in  \eqref{coupled_problem} generalizes the classical Westervelt and Kuznetsov equations in nonlinear acoustics. In the case of the Westervelt equation, $u$ in \eqref{coupled_problem} represents the acoustic pressure $p$, whereas in the Kuznetsov equation $u$ represents the acoustic velocity potential $\psi$. The two quantities can be related using $p=\rho \psi_t$, where $\rho$ is the medium density.  Concerning the nonlinearities, these two classical equations are recovered with the following choices:
\begin{equation}
	\begin{aligned}
		\calN = \begin{cases}
			\,	\kW(\theta)\left(u^2\right)_{tt}= 2\kW(\theta)\left(u \utt+\ut^2 \right)  &\textup{Westervelt's equation}, \\[1mm]
			\,	\kK(\theta)\left(\ut^2\right)_{t} +(|\nabla u|^2)_t =2\kK(\theta)\ut \utt +2\nabla u \cdot \nabla \ut  \quad  &\textup{Kuznetsov's equation},
		\end{cases}
	\end{aligned}
\end{equation}
where the temperature-dependent nonlinearity coefficients are given by
\begin{equation} \label{nonlinearity coeffs}
	\kW \coloneqq \frac{1}{\rho {q}}\left(1+\frac{B}{2A}\right), \ \quad \kK \coloneqq \frac{1}{{q}}\frac{B}{2A}.
\end{equation}
In \eqref{nonlinearity coeffs}, $\frac{B}{A}$ represents the acoustic nonlinearity parameter of the medium, {where $A$ and $B$ arise as coefficients in the Taylor expansion of the pressure-density relation around the ambient values}. Note that the acoustic nonlinearity parameter is also known to depend on the temperature; see, for example,~\cite[Fig.\ 7]{van2011feasibility}.

\subsection{Mathematical generalization of the model} \label{Sec: Assumptions functions}To encompass both nonlinearity cases we assume in the analysis that the functional $\calN$ is given by
\begin{equation} \label{def calN}
	\calN(u, \ut, \utt, \nabla u, \nabla \ut, \theta)= \kW(\theta)\left(u^2\right)_{tt} + \kK(\theta)\left(\ut^2\right)_{t} +\ell(|\nabla u|^2)_t, \ \ell \in \R.
\end{equation}
Concerning the nonlinearity coefficients in \eqref{def calN}, we assume that
\begin{equation}
	\kW,\, \kK \in \Czerooneloc(\R).
\end{equation}
\subsubsection*{Assumptions on the medium parameters} 
Regarding the temperature-dependent speed of sound squared $q$ and sound diffusivity $\beta$, 
we assume that  $q \in P_m(\R)$ and {$\beta \in P_n(\R)$} are polynomials over $\R$ of maximal degree $m \in \N$ and $n \in \N$, respectively, and
such that
\[
q_0:=q(0)>0, \quad  \beta_0  := \beta(0)> 0.
\]
These positivity assumptions correspond to the usual assumptions of positivity of the speed of sound and sound diffusivity at constant temperatures.  We emphasize that the condition $\beta_0>0$ is particularly important as the presence of strong damping in the acoustic component (that is, having $-\beta_0 \Delta \ut$) will allow us to employ parabolic estimates in the numerical analysis.\\
\indent In practice,  the speed of sound and the acoustic attenuation coefficient  are indeed typically determined via a least-squares fit from data assuming polynomial dependence on the temperature; see, e.g.,~\cite{bilaniuk1993speed, connor2002bio, hallaj2001simulations}.

\subsubsection*{Assumptions on the absorbed energy}  In the literature, different forms of the functional $\calQ$ in \eqref{coupled_problem} are employed; see, e.g.,~\cite{shevchenko2012multi, hallaj2001simulations, connor2002bio}, and the references contained therein. We assume here that $\calQ$ has the following form:
\begin{equation}\label{Q_Form}
	\calQ (u, \ut, \theta) \coloneqq  \alpha(\theta) (\zeta_1 u^2+\zeta_2 \ut^2) \quad \text{with }  \alpha \in \Coneoneloc(\R), \quad \zeta_1, \zeta_2 \in \R.
\end{equation}
This allows us to cover, for example, the plane wave approximation for the volume rate of heat deposition (see~\cite[eq.\ (10.2.11)]{pierce2019acoustics})  given by $\calQ = \frac{ \tilde{\alpha}}{\rho {q^{1/2}}} p^2$ in both Westervelt and Kuznetsov regimes, where the acoustic pressure is $p =u$ and $p= \rho \ut$, respectively. Another expression for the absorbed energy found in the literature (see, e.g.,~\cite{nikolic2022local}) is $\calQ = \frac{2\beta}{\rho {q^2}}p_t^2$,
which is covered here in the Westervelt regime, where $p=u$. \\
\indent {The assumption  that  $\alpha$ belongs to $\Coneoneloc(\R)$ is needed in the error analysis of the heat subproblem in Section~\ref{semi-discret problem heat}; see Lemmas~\ref{lemma: est defect theta} and~\ref{lemma: est calFhtheta}. }
\begin{remark}
	Note that according to the available well-posedness results for the Westervelt--heat systems, the non-degeneracy condition
	\begin{equation}
		q(\theta) \geq \ulq >0
	\end{equation}
	is expected to hold for sufficiently smooth and small pressure-temperature data. Thus, in the case	$\alpha \sim \beta/q^2$, the assumed regularity of $\alpha$
	follows by the assumed properties of $q$ and $\beta$.
\end{remark}

\subsection*{Notation} We use $x \lesssim y$ below to denote $x \leq C y$, where $C>0$ does not depend on the spatial discretization parameter $h$. By $(\cdot, \cdot)_{L^2}$ we denote the scalar product on $\Ltwo$. We often omit the temporal domain $(0,T)$ when denoting the norms in Bochner spaces; for example, $\|\cdot\|_{L^p(L^q(\Omega))}$ denotes the norm on $L^p(0,T; L^q(\Omega))$. We use the subscript $t$ to emphasize that the temporal domain is $(0,t)$ for some $t \in (0,T)$; for example, $\|\cdot\|_{L^p_t(L^q(\Omega))}$ denotes the norm on $L^p(0,t; L^q(\Omega))$ for $t \in (0,T)$.

	\subsection{Assumptions on the exact solution} \label{Sec: Assumptions exact solution} Let $\Omega \subset \R^\dimension$, $\dimension \in \{1,2,3\}$, be an open and bounded set. Considering  data, we assume homogeneous Dirichlet boundary conditions for the pressure and temperature and sufficiently regular initial pressure and temperature data. That is, we consider the approximation of the following initial-boundary value problem:
\begin{equation}  \label{coupled_ibvp}
	\left\{ \begin{aligned}
		& \utt-q(\theta)\Delta u - \beta(\theta) \Delta \ut +\calN(u, \ut, \utt, \nabla u, \nabla \ut, \theta)= \fu \ \, &&\text{in} \ \Omega \times (0,T), \\[1mm]
		& 	\theta_t -\kappa\Delta \theta+ \nu\theta = \calQ(u, \ut, \theta) \qquad &&\text{in} \ \Omega \times (0,T), \\[1mm]
		& 	u_{\vert \partial \Om}=	\theta_{\vert \partial \Om}=0  \qquad &&{\text{in} \ (0,T)}, \\[1mm]
		& 
		{u_{\vert t=0} = u_0, \quad
		{\ut}_{\vert t=0}= u_1, \quad \theta_{\vert t=0}=\theta_0}  \qquad &&\text{in} \ \Omega,
	\end{aligned} \right.
\end{equation}
with $\calN$ as in \eqref{def calN}. Given $\eta \geq 1$ (which will denote the polynomial degree of the finite element basis functions on an element), we assume that there exists a unique solution of the problem such that
\begin{equation}
	\begin{aligned}
		(u, \theta) \in \Xu \times \Xtheta,
	\end{aligned}
\end{equation}
with
\begin{equation}
	\|u\|_{\Xu} + \|\theta\|_{\Xtheta} \leq C
\end{equation}
for some $C>0$, where the two spaces are defined as follows:
\begin{equation}
	\begin{aligned}
		\Xu \coloneqq \Bigl\{u:&\  u \in L^\infty\left(0,T; \Hpoldegone \cap \Woneinf \cap \Honezero\right),\\&\  \ut \in L^\infty\left(0,T; \Hpoldegone\cap \Woneinf \cap \Honezero\right) \\
		&\ \utt \in L^2(0,T; \Hpoldegone) \Bigr\}
	\end{aligned}
\end{equation}
and
\begin{equation}
	\begin{aligned}
		\Xtheta \coloneqq \Bigl\{\theta:&\ \theta \in                                                                                                                                                                                                                                                                                                                                                                                                                                                                                                                                                                                                                                                                                                                                                                                                                                                                                                                                                                                                                                                                                                                                                                                                                                                                                                                                                L^\infty\left(0,T; \Hpoldegone \cap W^{1, \infty}(\Omega) \cap \Honezero\right)
		\cap L^2\left(0,T;  W^{\poldeg+1, \dimension+\delta }(\Omega)\cap W^{\poldeg +1 , \infty}(\Omega)\right), \\
		&\ \thetat \in L^\infty(0,T; \Hpoldegone \cap W^{1, \infty}(\Omega){)} \Bigr\}
	\end{aligned}
\end{equation}
with $\dimension+\delta  \in [2,6]$. For the upcoming analysis, it is worth noting that
\[
\Xu, \, \Xtheta \hookrightarrow L^\infty(0,T; \Linf).
\]
Furthermore, our main theoretical result (see Theorem~\ref{thm main}) assumes that there exists a sufficiently small $r>0$, such that
\begin{equation}  \label{smallness condition u theta}
	\begin{aligned}
		\begin{multlined}[t]
			\norm{{ \beta(\theta)-\beta_0}}_{\LinfLinf}
			+ \|\kW(\theta) u\|_{\CLinf}
			+ \|\kK (\theta) \ut\|_{\CLinf}
			+ \|\alpha(\theta) \ut\|_{\CLinf} \leq r.
		\end{multlined}
	\end{aligned}
\end{equation}
 {Note that the smallness of $\beta(\theta)-\beta_0 \equiv \beta(\theta)-\beta(0)$ means that the fluctuation of the ambient temperature is required to be small.}  The small-data well-posedness analysis of~\eqref{coupled_ibvp} in the Westervelt case (and somewhat simplified function $\calQ$) based on energy arguments can be found in~\cite{nikolic2022local, nikolic2022westervelt}, under the assumption that the function $q$ does not degenerate and that $\beta=const.>0$. In~\cite{wilke2023p}, the concept of maximal $L^p$-$L^q$ regularity has been utilized to show local and global well-posedness of the non-isothermal Westervelt equation. The small-data local and global well-posedness of the Kuznetsov equation with constant medium parameters can be found in~\cite{mizohata1993global}. Although wave-heat system \eqref{coupled_ibvp} in its full generality assumed here does not appear to have been studied rigorously in the literature in terms of well-posedness, we expect that the general framework of~\cite{wilke2023p} can be utilized for this purpose. The smallness assumption in \eqref{smallness condition u theta} can then be enforced (via continuous dependence on data) by the smallness of initial data $(u_0, u_1, \theta_0)$.

\subsection{Main result} We next present the main theoretical result of this work.  We employ Lagrange finite elements here on a quasi-uniform triangulation  $\Triag$ and introduce the finite element space
incorporating the homogeneous boundary conditions
\begin{equation}
	\begin{aligned} \label{def:Vh}
		\Vh \coloneqq \Big\{ \phih \in C(\Om) \mid 
		  \phih |_{\partial \Omega} = 0
		 \, \, \text{and}
		\, \,	
			\phih |_K \in \mathcal{P}_\poldeg(K)   \text{ for all } K \in \Triag  \Big\}
	\end{aligned}
\end{equation}
of piecewise polynomials of degree $\poldeg \geq 1$, which is used both for the pressure and temperature. We introduce the Ritz
projection $\projRitz \colon \Honezero \to \Vh$
defined for $\varphi \in \Honezero$
via
\begin{align}
	\ip{\nabla \varphi}{ \nabla \phih} &= \ip{\nabla \projRitz \varphi}{ \nabla \phih}
\end{align}
for all $\phih \in \Vh$.  Further, we rely on the nodal interpolation operator $\Ih \colon C(\Omega) \to \Vh$, and define the discrete Laplacian operator $\Deltah \colon \Vh \to \Vh$
for $\psi_h,\phih \in \Vh$
via the relation
\begin{equation}
	\ip{\Deltah \psi_h}{\phih} \coloneqq - \ip{\nabla \psi_h}{ \nabla \phih}.
\end{equation}
Further, we introduce the bilinear functional $a(\cdot, \cdot): \Vh \times \Vh \rightarrow \R$ as follows:
\begin{equation}
	a(\psi_h, \phi_h) \coloneqq (\nabla \psi_h,  \nabla \phi_h)_{L^2}.
\end{equation}
With these preparations, we consider the semi-discrete acoustic problem:
\begin{subequations} \label{semi-discret problem wave}
	\begin{equation+} \label{discrete wave}
		\begin{aligned}
			\begin{multlined}[t](\uhtt, \phih)_{L^2} +a(\uh, q(\thetah) \phih)
				+
				a(\pht, \beta (\thetah) \phih) \\\hspace*{1.5cm}+(\calN(\uh, \uht, \uhtt, \nabla \uh, \nabla \uht, \thetah), \phih)_{L^2}
				= (\fph, \phih)_{L^2}
			\end{multlined}
		\end{aligned}
	\end{equation+}
	for all $\phih \in \Vh$, $t \in [0,T]$, with
	\begin{equation+} \label{discrete wave IC}
		(\uh, \uht)_{\vert t=0} = (\uzeroh,\uoneh).
	\end{equation+}
\end{subequations}
Note that, with {$\tbeta(\thetah) \coloneqq \beta(\thetah)-\beta_0$ and $\beta_0 \coloneqq \beta(0)$, we have}
\begin{equation} \label{rewriting a}
	\begin{aligned}
		a(\uh, q(\thetah) \phih)
		=&\,
		a(\uh, \projRitz [q(\thetah) \phih])
		=
		-(\Delta_h \uh, \projRitz [q(\thetah) \phih])_{L^2}, \\
		a(\uht, \tbeta(\thetah) \phih)	=&\,
		a(\uht, \projRitz [\tbeta(\thetah) \phih])
		=
		-(\Delta_h \uht, \projRitz [\tbeta(\thetah) \phih])_{L^2}.
	\end{aligned}
\end{equation}
The semi-discrete heat equation is given by
\begin{subequations} \label{semi-discret problem heat}
	\begin{equation+} \label{discrete heat}
		(\thetaht, \phih)_{L^2}
		+
		\kappa a(\thetah, \phih) +\nu(\thetah, \phih)_{L^2} =\, (\calQ(\uh, \uth, \thetah), \phih)_{L^2}
	\end{equation+}
	for all $\phih \in \Vh$, $t \in [0,T]$, with
	\begin{equation+} \label{discrete heatIC}
		{\thetah}_{\vert t=0}= \thetazeroh.
	\end{equation+}
\end{subequations}
Our main theoretical result establishes \emph{a priori} error bounds for $(\uh, \thetah)$ in the energy norm.
\begin{theorem}[\emph{A priori} error estimate]\label{thm main}
	Let the assumptions made on the temperature-dependent functions in Section~\ref{Sec: Assumptions functions} and on the exact solution $(u, \theta)$ of \eqref{coupled_ibvp} in Section~\ref{Sec: Assumptions exact solution} hold with $\poldeg \geq 1$. Assume that $f$, $f_h \in L^2(0,T; \Ltwo)$ are such that
	\begin{equation} \label{assumption_accuracy_fh}
		\begin{aligned}
			\|\fu-\fuh\|_{\LtwoLtwo}
			\leq C h^\poldeg
		\end{aligned}
	\end{equation}
	and that the approximate initial data are chosen as the Ritz projections of the exact ones; that is,
	\begin{equation} \label{eq:init_choice_space}
		(\uh(0), \partial_t \uh(0))= (\projRitz u_0, \projRitz u_1), \quad 	\thetah(0)= \projRitz \theta_0.
	\end{equation}
	Then, there exist $h_0>0$ and $r>0$,  independent of $h$, such that for all $h\leq h_0$ and
	\begin{equation} \label{smallness condition u theta r}
		\begin{aligned}
			&\begin{multlined}[t]
				\norm{{\beta(\theta)-\beta_0}}_{\LinfLinf}
				+ \|\kW(\theta) u\|_{\CLinf}
				+
				\|\kK (\theta) \ut\|_{\CLinf}
				+
				\|\alpha(\theta) \ut\|_{\CLinf}
				\leq r,
			\end{multlined}
		\end{aligned}
	\end{equation}
{with $\beta_0 \coloneqq \beta(0)$},	problem \eqref{semi-discret problem wave}, \eqref{semi-discret problem heat} has a unique solution $(\uh, \thetah) \in H^2(0,T; \Vh) \times H^1(0,T; \Vh)$, which satisfies the following error bound:
	\begin{equation} \label{FE_final_est}
		\begin{aligned}
			&\begin{multlined}[t]
				\|\delt^2(u-\uh)\|_{\LtwotLtwo}
				+
				\|\nabla \delt (u-\uh)\|_{\LinftLtwo}
				+
				\|\nabla (u-\uh)\|_{\LinftLsix}
				\\ \hspace*{1.4cm}
				+
				\| \delt (\theta-\thetah)\|_{\LinftLtwo} +	\| \nabla (\theta-\thetah)\|_{\LinftLsix}
				\leq	C(\norm{u}_{\Xu}, \norm{\theta}_{\Xtheta})  h^{\poldeg}.
			\end{multlined}
		\end{aligned}
	\end{equation}
\end{theorem}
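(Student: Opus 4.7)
The overall strategy is a Ritz-projection decomposition combined with energy estimates that exploit the parabolic character of the wave component granted by the strong damping $-\beta_0 \Delta \partial_t u$. I would set $e_h^u := \projRitz u - u_h$ and $e_h^\theta := \projRitz \theta - \thetah$, so that $u-u_h = (u-\projRitz u) + e_h^u$ and analogously for $\theta$. The projection defects and their time derivatives satisfy $O(h^{\poldeg})$ bounds in every norm appearing on the left-hand side of \eqref{FE_final_est}, by standard Ritz estimates together with the regularity encoded in $\Xu$ and $\Xtheta$ (in particular $W^{1,\infty}$-regularity of $u,\theta$ and $H^{\poldeg+1}$-regularity in space combined with $\Hpoldegone \hookrightarrow \Lsix$ for $\dimension\le 3$). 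It is therefore enough to bound the purely discrete errors $e_h^u$ and $e_h^\theta$ in the matching discrete norms.

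I would then derive the discrete error equations by subtracting \eqref{discrete wave} and \eqref{discrete heat} from the exact equations tested against $\Vh$, and rewrite them using the identity \eqref{rewriting a} to expose the discrete-Laplacian structure. The resulting wave error equation has principal part $\partial_t^2 e_h^u - \beta_0 \Delta_h \partial_t e_h^u$ and a right-hand side $\calFhu$ collecting four types of contribution: projection defects of the exact equation (bounded by $h^{\poldeg}$), temperature-coupling remainders arising from $q(\theta)-q(\thetah)$ and $\beta(\theta)-\beta(\thetah)$ via the Lipschitz property of $q,\beta$, the data error $f-\fuh$ from \eqref{assumption_accuracy_fh}, and the nonlinear remainders in $\calN$, which are made absorbable at top order thanks to the smallness condition \eqref{smallness condition u theta r}. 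Testing this equation with a suitable combination of $\partial_t e_h^u$, $-\Delta_h \partial_t e_h^u$ and $\partial_t^2 e_h^u$ and integrating in time produces $L^\infty_t(L^2)$ control of $\nabla \partial_t e_h^u$ together with $L^2_t(L^2)$ control of $\partial_t^2 e_h^u$ and of $\Delta_h \partial_t e_h^u$; the $L^\infty_t(L^6)$ bound on $\nabla e_h^u$ then follows via a discrete Sobolev embedding using quasi-uniformity of $\Triag$. The heat error equation, being parabolic, is tested in the classical way with $\partial_t e_h^\theta$, yielding the $L^\infty_t(L^2)$ bound on $\partial_t e_h^\theta$ and the $L^\infty_t(L^6)$ bound on $\nabla e_h^\theta$; the $\calQ$-coupling to $(\uh,\uht)$ is handled using $\alpha \in \Coneoneloc(\R)$ and the $L^\infty$-boundedness of the exact solution.

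Assembling the two sets of estimates yields a coupled Gr\"onwall inequality of the schematic form
\begin{equation*}
	\mathcal{E}(t) \,\le\, C h^{2\poldeg} + C \int_0^t \mathcal{E}(s)\,\textup{d}s + C r\, \mathcal{E}(t) + (\text{superlinear terms in }\mathcal{E}),
\end{equation*}
where $\mathcal{E}$ encodes the full discrete energy on the left-hand side of \eqref{FE_final_est}. For $r$ sufficiently small the linear term $C r\, \mathcal{E}(t)$ is absorbed on the left, and a standard Gr\"onwall argument closes the integral term. The superlinear remainder, stemming from the Westervelt, Kuznetsov and $\calQ$-nonlinearities, is treated by a continuity / bootstrap argument: local existence of $(u_h,\thetah)$ follows from finite-dimensional ODE theory on $\Vh$, and on the maximal subinterval of $[0,T]$ where $\mathcal{E}$ stays below a fixed threshold the a priori estimate reproduces a strictly smaller bound whenever $h\le h_0$, so that the interval fills $[0,T]$, giving simultaneously the error bound, existence and uniqueness.

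The main obstacle I anticipate is the top-order thermal-to-acoustic coupling. Writing $\beta(\theta)-\beta(\thetah) = \beta'(\xi)(\theta-\thetah)$ introduces a term of the form $\beta'(\xi)\,e_h^\theta\,\Delta \partial_t u$ that competes directly with the dissipation $\beta_0 \Delta \partial_t e_h^u$, while the Westervelt and Kuznetsov contributions produce $\kW(\theta) u\, \partial_t^2 e_h^u$ and $\kK(\theta) \partial_t u\, \partial_t^2 e_h^u$, whose sup-norms are exactly what \eqref{smallness condition u theta r} is designed to control. The refined $L^\infty_t(L^6)$ gradient norm on the left of \eqref{FE_final_est} is crucial in this step: it supplies the extra integrability needed to bound the cross-products between $\nabla e_h^u$ and the temperature-dependent coefficients by H\"older without losing a power of $h$, and it is precisely what the parabolic character of the wave component, via the strong damping $-\beta_0\Delta \partial_t u$, allows one to obtain.
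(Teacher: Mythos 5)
Your overall architecture --- Ritz splitting, recasting the wave error equation as a parabolic problem driven by $\beta_0$, testing with $\partial_t^2 e_h^u$ and $-\Delta_h\partial_t e_h^u$, absorbing the top-order nonlinear terms via the smallness condition, and closing with a Gr\"onwall-plus-continuation argument on a maximal interval --- matches the paper's proof for the acoustic subproblem and for the final assembly (the paper formalizes the bootstrap through an $h$-dependent maximal time $\finalth$ in \eqref{def:finaltimeh} and a Picard--Lindel\"of step on the set $U_h$, but that is the same mechanism you describe).

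There is, however, a genuine gap in your treatment of the heat subproblem. Testing the heat error equation "in the classical way with $\partial_t e_h^\theta$" yields only $\|\errhthetat\|_{\LtwotLtwo}$ and $\|\nabla\errhtheta\|_{\LinftLtwo}$, i.e.\ $L^\infty_t(H^1)$ control of $\errhtheta$. This is insufficient on two counts. First, it does not give the $\LinftLsix$ bound on $\nabla(\theta-\thetah)$ claimed in \eqref{FE_final_est}: the discrete Sobolev embedding \eqref{discrete_Sobolev_embedding} upgrades to $W^{1,6}$ and $\Linf$ only from $\|\Delta_h\errhtheta\|_{\Ltwo}$, not from $\|\nabla\errhtheta\|_{\Ltwo}$. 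Second, and more seriously, the coupled estimate does not close without it: the wave right-hand side contains $\|q(\theta)-q(\thetah)\|_{\LtwotWonedelta}$, $\|\tbeta(\theta)-\tbeta(\thetah)\|_{\LtwotWonedelta}$ and $\|\kW(\theta)-\kW(\thetah)\|_{\LtwotLinf}$, which in $\dimension=3$ require $\|\errhtheta\|$ in $L^2_t(\Wonedelta)$ and $L^2_t(\Linf)$, hence $\|\Delta_h\errhtheta\|_{\LinftLtwo}$. The paper obtains this by testing instead with $-\Delta_h\partial_t e_h^\theta$ and integrating by parts in time (Lemma~\ref{lemma: better estimate parabolic}), which forces one to control the heat right-hand side in $\HonetLtwo$ rather than $\LtwotLtwo$. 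The time derivative $\partial_t\calFhtheta$ then produces the term $\|\alpha(\thetah)\,\uht\|_{\LinfLinf}\|\errhutt\|_{\LtwotLtwo}$ (Lemma~\ref{lemma: est calFhtheta}), and this is precisely why the smallness of $\|\alpha(\theta)\ut\|_{\CLinf}$ appears in \eqref{smallness condition u theta r} --- a mechanism your proposal does not account for, since you attribute the role of the smallness condition only to the $\kW$ and $\kK$ terms multiplying $\partial_t^2 e_h^u$. Without the stronger parabolic estimate for the temperature error and the accompanying $H^1$-in-time bound on its source, the argument as you have outlined it cannot be completed in three space dimensions.
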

~\\
\paragraph{\bf Discussion of the main result} Theorem~\ref{thm main} establishes sufficient conditions for the optimal order of convergence of $(\uh, \thetah)$ in the energy norm. Let us discuss some of the assumptions. The need for using the Ritz projection of the initial values comes from bounds involving $\Deltah$ applied to the initial error; see Sections~\ref{Sec: estimate wave} and~\ref{Sec: estimates semidiscrete heat} for details. A different choice, say, for example, the nodal interpolation, would lead to 
{a theoretical}
 order reduction in the error analysis.
{
Such artificial restrictions on approximate data are present in the literature in the numerical analysis of other nonlinear models; see, e.g., the seminal work by Makridakis in~\cite[Theorem~2.1]{Mak93}.}\\
\indent The assumption that the exact temperature should satisfy \sloppy $\theta \in L^2(0,T;  W^{\poldeg+1, \dimension+\delta }(\Omega))$ comes from the need to estimate the error in the temperature dependent coefficients $\beta$ and $q$. In particular, we need to employ the following estimate
for  $w \in \{q, \tbeta\}$ (see \eqref{est h q beta} below):
\begin{equation}
	\begin{aligned}
		\| w(\theta) - w (\thetah)\|_{\LtwotWonedelta} \lesssim&\,  \|\theta -\thetah\|_{\LtwotWonedelta} \\
		\lesssim&\,   \|\thetah -\Rhtheta \|_{\LtwotWonedelta}+\|\theta -\Rhtheta\|_{\LtwotWonedelta},
	\end{aligned}
\end{equation}
and then further use the bound $\|\theta -\Rhtheta\|_{L^2(W^{1, \dimension+\delta }(\Omega))} \lesssim h^{\poldeg} \|\theta\|_{L^2(W^{\eta, \dimension+\delta }(\Omega))}$ (see Section~\ref{Sec: Ritz proj} for the approximation properties of the Ritz projection).\\
\indent Similarly, the assumption that $\theta \in L^2(0,T; W^{\eta+1,\infty}(\Omega))$
comes from needing to estimate the error in the temperature-dependent coefficients $\kW$ and $\kK$. In particular, we will employ the following bound  (see \eqref{est diff k terms} below):
\begin{equation}
	\begin{aligned}
		&\|\kW(\theta)-\kW(\thetah))\|_{\LtwotLinf} + \|\kK(\theta)-\kK(\thetah))\|_{\LtwotLinf}\\
		\lesssim&\, \|\theta-\thetah\|_{\LtwotLinf}\\
		\lesssim&\, \|\thetah-\Rhtheta\|_{\LtwotLinf}+\|\theta-\Rhtheta\|_{\LtwotLinf}
	\end{aligned}
\end{equation}
and then further need to rely on the fact that $\|\theta-\Rhtheta\|_{\LtwoLinf} \lesssim h^{\poldeg} \|\theta\|_{L^2(W^{\eta+1, \infty}(\Omega))}$.
{Even though this regularity assumption could be improved, we still need it for a technical estimate within the proof of Lemma~\ref{lemma: est tbeta q terms}.}\\
\indent Finally, let us note that there is a large literature available on the discretization of nonlinear wave equations originating from the seminal work \cite{BalD89}. However, they do not consider the coupled wave-heat case, and we thus refrain from a further discussion.

\subsection{Organization of the rest of the paper} The rest of the paper is organized as follows. In Section~\ref{Sec: Approach Aux results}, we provide background results on parabolic estimates which are used in the well-posedness and error analysis of the semi-discrete problem, as well as certain useful properties of the Ritz projection and known embedding and inverse estimates. In Section~\ref{Sec: Existence}, we prove that the semi-discrete problem has an accurate solution, however, on a possibly $h$-dependent time interval. Toward prolonging the existence of this solution to $[0,T]$, we then focus on deriving uniform energy estimates for the wave and heat subproblems in Sections~\ref{Sec: estimate wave} and~\ref{Sec: estimates semidiscrete heat}, respectively. These are combined in Section~\ref{Sec: proof of the main result} to {prove the} main theoretical result of this work stated in Theorem~\ref{thm main}. Finally, in Section~\ref{Sec: Numerical experiments} we validate the theoretical convergence rate through numerical experiments and provide additional numerical examples, where we show the performance of the model and developed numerical schemes.

\section{The approach and auxiliary results} \label{Sec: Approach Aux results}

Our numerical analysis follows by first proving the existence of a solution $(\uh, \thetah)$ on a possibly discretization-dependent time interval $[0, \finalth]$ and then extending the existence to $[0,T]$ by means of a suitable uniform bound on this solution.  This approach is in the general spirit of, e.g.,~\cite{hochbruck2022error, Doe24, dorich2024robust}, which have investigated single-physics wave models. The focal and most delicate point of the numerical analysis here is the derivation of a suitable energy bound for the nonlinear wave-heat system.  To this end, since the acoustic component is strongly damped, the idea is to see the wave-heat system 
 in the following parabolic form:
 {
 \begin{equation} \label{coupled_problem parabolic exact}
 	\left\{ \begin{aligned}
 		&\begin{aligned}
 			\utt- \beta_0 \Delta \ut & = q(\theta)
 			\Deltah \uh+\tbeta(\theta)\Delta_h \uht
 			- \calN(u, \ut, \uhtt, \nabla u, \nabla \ut, \theta)+f,
 		\end{aligned}\\[1mm]
 		& 	\thetat -\kappa\Delta \theta+ \nu\theta = \calQ(u, \ut, \theta),
 	\end{aligned} \right.
 \end{equation}
where $\tbeta(\theta) \coloneqq \beta(\theta)-\beta_0$.  We then discretize it as follows: }
%
	\begin{equation} \label{coupled_problem parabolic}
		\left\{ 
		\begin{aligned}
			&\begin{aligned}
				\begin{multlined}[t]
					(\uhtt, \phih)_{L^2}
					 +
					\beta_0  a(\pht,  \phih)
					 = 
					 -
					  a(\uh, q(\thetah) \phih)
					-
					a(\pht,\tbeta(\thetah) \phih) 
					\\\hspace*{44mm}
					-(\calN(\uh, \uht, \uhtt, \nabla \uh, \nabla \uht, \thetah), \phih)_{L^2}
					+ (\fph, \phih)_{L^2} ,
				\end{multlined}
			\end{aligned}
			\\[1mm]
			&(\thetaht, \psi_h)_{L^2}
			+
			\kappa a(\thetah, \psi_h) +\nu(\thetah, \psi_h)_{L^2} =\, (\calQ(\uh, \uth, \thetah), \psi_h)_{L^2}
		\end{aligned} 
\right.
	\end{equation}
for all $\phih, \psi_h \in  \Vh$.

This setup will allow us to exploit, to a certain extent, estimates for semi-discrete parabolic problems. For this reason, we present next two estimates for linear parabolic problems that will be used in Sections~\ref{Sec: estimate wave} and~\ref{Sec: estimates semidiscrete heat}.

\subsection{A maximal regularity estimate for linear parabolic problems}

Given $\gh \in L^2(0,T; \Vh)$, $b_0>0$ and $\nu \geq 0$, consider the problem
\begin{equation} \label{semidiscrete parabolic}
	\ip{\delt \wh }{\varphi_h} + b_0 a(\wh, \varphi_h)+ \nu \ip{\wh}{\varphi_h} = \ip{g_h }{\varphi_h}, \qquad \forall \varphi_h \in \Vh.
\end{equation}
The finite element analysis of this problem with homogeneous initial data using a maximal $L^p$ regularity approach can be found, for example, in~\cite[Theorem 1.1]{LiS17}. For completeness, we present here the derivation of the $L^2$-based energy bound, where compared to~\cite{LiS17} we allow for non-zero initial data.

\begin{lemma} \label{lemma: L2L2 estimate}
	Let $\gh \in L^2(0,T; \Vh)$.	The solution of \eqref{semidiscrete parabolic} satisfies
	\begin{equation} \label{eq:energy_estimate}
		\begin{aligned}
			& \begin{multlined}[t]
				\frac12 \int_0^t \norm{\delt \wh(s)}_{\Ltwo}^2 \ds
				+
				\Bigl(\frac{b_0}{2} +1\Bigr) \norm{\nabla \wh}_{\Ltwo}^2 \big|_0^t
				+\frac{\nu}{2} \norm{ \wh}_{\Ltwo}^2 \big|_0^t \\
				+
				\frac{b_0}{2} \int_0^t  \norm{\Delta_h \wh(s)}_{\Ltwo}^2 \ds	+
				\nu \int_0^t  \norm{\nabla \wh(s)}_{\Ltwo}^2 \ds
			\end{multlined}
			\\
			&\leq
			\frac12
			\Bigl(1+  \frac{1}{b_0} \Bigr)
			\int_0^t \norm{\gh (s)}_{\Ltwo}^2 \ds.
		\end{aligned}
	\end{equation}
\end{lemma}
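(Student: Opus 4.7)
The plan is to obtain the bound by testing \eqref{semidiscrete parabolic} with two discrete test functions, $\varphi_h = \delt \wh$ and $\varphi_h = -\Delta_h \wh$, both of which are admissible since $\delt \wh \in \Vh$ for each $t$ and $\Delta_h\colon \Vh \to \Vh$. Existence and uniqueness of $\wh \in H^1(0,T;\Vh)$ follow from standard ODE theory in the finite-dimensional space $\Vh$, so only the energy estimate itself requires real work.

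First, the choice $\varphi_h = \delt \wh$ produces the identity
\begin{equation*}
\norm{\delt \wh}_{\Ltwo}^2 + \frac{b_0}{2}\, \frac{d}{dt}\norm{\nabla \wh}_{\Ltwo}^2 + \frac{\nu}{2}\,\frac{d}{dt}\norm{\wh}_{\Ltwo}^2 = (\gh, \delt \wh)_{L^2},
\end{equation*}
whose right-hand side is absorbed via the symmetric Young inequality $(\gh,\delt \wh)_{L^2} \leq \tfrac{1}{2}\norm{\gh}_{\Ltwo}^2 + \tfrac{1}{2}\norm{\delt \wh}_{\Ltwo}^2$. Second, the choice $\varphi_h = -\Delta_h \wh$, combined with the defining relation $a(\psi_h,\phi_h) = -(\Delta_h \psi_h, \phi_h)_{L^2}$, yields the identifications $a(\wh,-\Delta_h \wh) = \norm{\Delta_h \wh}_{\Ltwo}^2$, $(\delt \wh, -\Delta_h \wh)_{L^2} = \tfrac{1}{2}\frac{d}{dt}\norm{\nabla \wh}_{\Ltwo}^2$, and $(\wh,-\Delta_h \wh)_{L^2} = \norm{\nabla \wh}_{\Ltwo}^2$, so
\begin{equation*}
\frac{1}{2}\,\frac{d}{dt}\norm{\nabla \wh}_{\Ltwo}^2 + b_0\, \norm{\Delta_h \wh}_{\Ltwo}^2 + \nu\, \norm{\nabla \wh}_{\Ltwo}^2 = -(\gh, \Delta_h \wh)_{L^2},
\end{equation*}
whose right-hand side is bounded by the weighted Young inequality $|(\gh,\Delta_h \wh)_{L^2}| \leq \tfrac{1}{2b_0}\norm{\gh}_{\Ltwo}^2 + \tfrac{b_0}{2}\norm{\Delta_h \wh}_{\Ltwo}^2$, with the second term absorbed by the $b_0\norm{\Delta_h \wh}_{\Ltwo}^2$ on the left-hand side.

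Adding the two resulting pointwise-in-time differential inequalities yields a combined inequality in which every coefficient on the left is non-negative and the right-hand side totals $\tfrac{1}{2}(1 + 1/b_0)\norm{\gh}_{\Ltwo}^2$. Integrating from $0$ to $t$ then delivers the claimed bound \eqref{eq:energy_estimate}. The argument is a clean energy estimate with no serious analytical difficulty to overcome; the one technical subtlety worth flagging is the use of the discrete Laplacian $\Delta_h \wh$ (rather than a continuously-lifted substitute) as a test function, which is precisely what keeps every computation inside $\Vh$ and makes the identification $a(\wh,-\Delta_h \wh) = \norm{\Delta_h \wh}_{\Ltwo}^2$ hold exactly, producing the parabolic $\norm{\Delta_h \wh}_{\Ltwo}^2$-term that is the whole point of the estimate.
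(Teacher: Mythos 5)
Your proposal is correct and follows essentially the same route as the paper: test \eqref{semidiscrete parabolic} with $\varphi_h = \delt \wh$ and with $\varphi_h = -\Delta_h \wh$, apply Young's inequality (with weight $b_0$ in the second case), and add and integrate. Your bookkeeping of the signs and of the factor $\tfrac12$ in $(\delt\wh,-\Delta_h\wh)_{L^2}=\tfrac12\tfrac{d}{dt}\norm{\nabla\wh}_{\Ltwo}^2$ is in fact slightly more careful than the paper's displayed intermediate identity.
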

\begin{proof}
	We test \eqref{semidiscrete parabolic} with $\varphi_h = \delt \wh$ and integrate over $(0,t)$ for $t \in (0,T)$ to obtain
	\begin{equation}
		\int_0^t \norm{\delt \wh(s)}_{\Ltwo}^2 \ds
		+
		\frac{ b_0}{2} \norm{\nabla \wh}_{\Ltwo}^2 \big|_0^t +
		\frac{ \nu}{2} \norm{ \wh}_{\Ltwo}^2 \big|_0^t
		=
		\int_0^t\ip{g_h (s) }{\delt \wh (s) } \ds.
	\end{equation}
	By choosing instead  $\varphi_h = - \Delta_h \wh$, we obtain
	\begin{equation}
		\begin{aligned}
			& \begin{multlined}
				\norm{\nabla \wh}_{\Ltwo}^2 \big|_0^t
				+
				b_0 \int_0^t  \norm{\Delta_h \wh(s)}_{\Ltwo}^2 \ds	+
				\nu \int_0^t  \norm{\nabla \wh(s)}_{\Ltwo}^2 \ds
				=
				\int_0^t\ip{\gh (s) }{\Delta_h \wh (s)} \ds.
			\end{multlined}
		\end{aligned}
	\end{equation}
	Adding the estimates and employing Young's inequality yields \eqref{eq:energy_estimate}.
\end{proof}
The bound in Lemma~\ref{lemma: better estimate parabolic} will be employed in the error analysis of the semi-discrete wave subproblem in Section~\ref{Sec: estimate wave} with $w_h =\delt (\projRitz u -\uh)$.

\subsection{Additional estimate for a more regular right-hand side} When working with the semi-discrete heat equation, we will have a relatively regular in time right-hand side due to the properties of the semi-discrete pressure field. For this reason, we also derive here an additional bound for parabolic problems that assumes more regularity in time of the right-hand side.
\begin{lemma} \label{lemma: better estimate parabolic}
	Let $\gh \in H^{1}(0,T; V_h)$.	Then the solution of \eqref{semidiscrete parabolic} satisfies
	\begin{equation} \label{est heat}
		\begin{aligned}
			&	\|\delt \wh(t)\|^2_{\Ltwo}
			+\int_0^t \norm{\nabla \delt \wh(s)}_{\Ltwo}^2 \ds
			+
			\frac{ b_0}{2} \norm{\Delta_h \wh(t)}_{\Ltwo}^2  +
			\frac{ \nu}{2} \norm{\nabla \wh(t)}_{\Ltwo}^2 \\
			\lesssim&\, e^{CT} (\|g_h\|^2_{H^{1}(\Ltwo)}+\|\Delta_h \wh(0)\|^2_{\Ltwo}+\|\nabla \wh(0)\|^2_{\Ltwo}).
		\end{aligned}
	\end{equation}
\end{lemma}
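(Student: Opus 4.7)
The plan is to work with a time-differentiated version of \eqref{semidiscrete parabolic}. Since $V_h$ is finite-dimensional, \eqref{semidiscrete parabolic} is simply a linear ODE system; with $\gh \in H^1(0,T;V_h)$ this yields $\wh \in H^2(0,T; V_h)$, so differentiation in time is legitimate. Differentiating \eqref{semidiscrete parabolic} in $t$, testing with $\varphi_h = \delt \wh$, and integrating over $(0,t)$ gives
\begin{equation*}
\tfrac{1}{2}\|\delt\wh(t)\|_{\Ltwo}^2 + b_0 \int_0^t \|\nabla \delt \wh(s)\|_{\Ltwo}^2 \ds + \nu \int_0^t \|\delt \wh(s)\|_{\Ltwo}^2 \ds = \tfrac{1}{2}\|\delt \wh(0)\|_{\Ltwo}^2 + \int_0^t (\delt \gh, \delt \wh)_{L^2} \ds.
\end{equation*}
Young's inequality on the last term followed by Grönwall's lemma then produces
\begin{equation*}
\|\delt \wh(t)\|_{\Ltwo}^2 + \int_0^t \|\nabla \delt \wh(s)\|_{\Ltwo}^2 \ds \lesssim e^{CT}\bigl(\|\delt \wh(0)\|_{\Ltwo}^2 + \|\gh\|_{H^1(\Ltwo)}^2\bigr).
\end{equation*}

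The decisive step is the algebraic recovery of both $\|\delt \wh(0)\|_{\Ltwo}$ and the pointwise quantity $\|\Deltah \wh(t)\|_{\Ltwo}$. Because every term in \eqref{semidiscrete parabolic} lies in $V_h$ and the test functions exhaust $V_h$, we have the pointwise identity
\begin{equation*}
\delt \wh(t) = b_0 \Deltah \wh(t) - \nu \wh(t) + \gh(t) \qquad \text{in } V_h.
\end{equation*}
Evaluating at $t=0$, using Poincaré's inequality on $\wh(0)$, and invoking the embedding $H^1(0,T; \Ltwo) \hookrightarrow C([0,T]; \Ltwo)$ to bound $\gh(0)$ in $\Ltwo$ yields $\|\delt \wh(0)\|_{\Ltwo}^2 \lesssim \|\Deltah \wh(0)\|_{\Ltwo}^2 + \|\nabla \wh(0)\|_{\Ltwo}^2 + \|\gh\|_{H^1(\Ltwo)}^2$. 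Rearranging the same identity at a general $t$ gives $b_0^2 \|\Deltah \wh(t)\|_{\Ltwo}^2 \lesssim \|\delt \wh(t)\|_{\Ltwo}^2 + \|\wh(t)\|_{\Ltwo}^2 + \|\gh(t)\|_{\Ltwo}^2$, so the desired pointwise $\Deltah \wh$-bound follows once the right-hand side is controlled.

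To close the remaining pointwise $\|\nabla \wh(t)\|_{\Ltwo}^2$ and $\|\wh(t)\|_{\Ltwo}^2$ contributions, I would appeal to Lemma~\ref{lemma: L2L2 estimate}, which delivers both (the latter via Poincaré) in terms of $\|\nabla \wh(0)\|_{\Ltwo}^2$ and $\|\gh\|_{L^2(\Ltwo)}^2$. Combining all the bounds obtained so far, using the embedding $H^1 \hookrightarrow C$ in time once more to handle $\|\gh(t)\|_{\Ltwo}$, and absorbing the $\tfrac{b_0}{2}\|\Deltah \wh(t)\|_{\Ltwo}^2$ and $\tfrac{\nu}{2}\|\nabla \wh(t)\|_{\Ltwo}^2$ contributions on the left-hand side using the positivity of $b_0$ delivers \eqref{est heat}. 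The only real obstacle is careful bookkeeping of the $b_0$-dependence through Young's inequality; the pointwise algebraic $V_h$-identity is automatic here since the semi-discrete problem is a linear ODE system in time.
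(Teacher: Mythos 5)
Your argument is correct, and it reaches \eqref{est heat} by a genuinely different route than the paper. The paper never differentiates the equation in time: it tests \eqref{semidiscrete parabolic} with $\varphi_h = -\Delta_h \delt \wh$, integrates by parts in time on the resulting right-hand side $-\int_0^t \ip{\gh}{\Delta_h \delt \wh} \ds$ so that the time derivative lands on $\gh$, and closes with Young's inequality plus \Gronwall\ applied to $\int_0^t \norm{\Delta_h \wh}_{\Ltwo}^2 \ds$; this yields $\int_0^t \norm{\nabla \delt \wh}_{\Ltwo}^2 \ds$, $\norm{\Delta_h \wh(t)}_{\Ltwo}^2$ and $\norm{\nabla \wh(t)}_{\Ltwo}^2$ directly, and only afterwards recovers $\norm{\delt \wh(t)}_{\Ltwo}$ from the identity $\delt \wh = b_0 \Delta_h \wh - \nu \wh + \gh$ --- the same algebraic identity you use, but run in the opposite direction. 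You instead differentiate the (finite-dimensional, hence $H^2(0,T;V_h)$-regular) ODE system, test with $\delt \wh$ to obtain $\norm{\delt \wh(t)}_{\Ltwo}^2$ and $\int_0^t \norm{\nabla \delt \wh}_{\Ltwo}^2 \ds$ first, then read off $\norm{\Delta_h \wh(t)}_{\Ltwo}$ and $\norm{\delt \wh(0)}_{\Ltwo}$ algebraically and borrow the pointwise gradient bound from Lemma~\ref{lemma: L2L2 estimate}. Both proofs use exactly the regularity $\gh \in H^1(0,T;V_h)$ and produce the same $b_0$-, $\nu$- and $T$-dependence. What the paper's route buys is that it avoids differentiating the equation, so the argument transfers more readily beyond the ODE setting; what your route buys is a cleaner separation of the $\delt \wh$-energy identity from the elliptic recovery of $\Delta_h \wh$, at the price of having to invoke Lemma~\ref{lemma: L2L2 estimate} and the embedding $H^1(0,T;\Ltwo) \hookrightarrow C([0,T];\Ltwo)$ to control the pointwise terms $\norm{\wh(t)}_{\Ltwo}$, $\norm{\nabla \wh(t)}_{\Ltwo}$ and $\norm{\gh(t)}_{\Ltwo}$, plus Poincar\'e's inequality on $V_h \subset \Honezero$ for $\norm{\wh(0)}_{\Ltwo}$. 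All of these extra ingredients are available, so the proposal is complete.
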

\begin{proof}
	Note that $\gh \in H^{1}(0,T; V_h) \hookrightarrow C([0,T]; V_h)$. Testing with $\varphi_h = - \Delta_h \delt \wh$ and integrating in time leads to
	\begin{equation}
		\begin{multlined}
			\int_0^t \norm{\nabla \delt \wh(s)}_{\Ltwo}^2 \ds
			+
			\frac{ b_0}{2} \norm{\Delta_h \wh}_{\Ltwo}^2 \big|_0^t +
			\frac{ \nu}{2} \norm{\nabla \wh}_{\Ltwo}^2 \big|_0^t
			=
			-\int_0^t\ip{\gh (s) }{\Delta_h \delt \wh (s) } \ds.
		\end{multlined}
	\end{equation}
	To treat the right-hand side, we integrate by parts in time:
	\begin{equation}
		\begin{aligned}
			-\int_0^t\ip{g_h (s) }{\Delta_h \delt \wh (s) } \ds  & = - \ip{g_h (s) }{\Delta_h  \wh (s) } \big|_0^t
			+ \int_0^t\ip{\delt \gh (s) }{\Delta_h \wh (s) } \ds.
		\end{aligned}
	\end{equation}
	Using Young's inequality, we have for any $\eps>0$
	\begin{equation}
		\begin{aligned}
			&	\int_0^t \norm{\nabla \delt \wh(s)}_{\Ltwo}^2 \ds
			+
			\frac{ b_0}{2} \norm{\Delta_h \wh}_{\Ltwo}^2 \big|_0^t +
			\frac{ \nu}{2} \norm{\nabla \wh}_{\Ltwo}^2 \big|_0^t \\
			\lesssim&\,\begin{multlined}[t]  \|g_h(t)\|^2_{\Ltwo}+\eps \|\Delta \wh(t)\|^2_{\Ltwo}
				+\|g_h(0)\|^2_{\Ltwo}+ \|\Delta_h \wh(0)\|^2_{\Ltwo}
				\\
				+\int_0^t \|\delt \gh(s)\|^2_{\Ltwo}\ds+\intt \|\Delta_h \wh(s)\|^2_{\Ltwo}\ds.
			\end{multlined}
		\end{aligned}
	\end{equation}
	Then  by choosing $\eps$ sufficiently small and employing \Gronwall's inequality, we obtain
	\begin{equation}
		\begin{aligned}
			&\int_0^t \norm{\nabla \delt \wh (s)}_{\Ltwo}^2 \ds
			+
			\frac{ b_0}{2} \norm{\Delta_h \wh(t)}_{\Ltwo}^2  +
			\frac{ \nu}{2} \norm{\nabla \wh(t)}_{\Ltwo}^2 \\
			\lesssim&\, e^{CT} (\|g_h\|^2_{H^{1}(\Ltwo)}+\|\Delta_h \wh(0)\|^2_{\Ltwo}+\|\nabla \wh(0)\|^2_{\Ltwo}).
		\end{aligned}
	\end{equation}
	Additionally, since $\delt \wh = b_0 \Delta_h \wh - \nu \wh + \gh$, we can bootstrap the above estimate to obtain
	\begin{equation}
		\begin{aligned}
			\|\delt \wh(t)\|^2_{\Ltwo} = &\,		\|b_0 \Delta_h \wh(t) -\nu \wh(t) + \gh(t) \|_{\Ltwo}^2 \\
			\lesssim &\, e^{CT} (\|g_h\|^2_{H^{1}(\Ltwo)}+\|\Delta_h \wh(0)\|^2_{\Ltwo}+\|\nabla \wh(0)\|^2_{\Ltwo}).
		\end{aligned}
	\end{equation}
	By adding the two bounds, we arrive at \eqref{est heat}.
\end{proof}
The bound in Lemma~\ref{lemma: better estimate parabolic} will be employed in the error analysis of the semi-discrete heat subproblem in Section~\ref{Sec: estimates semidiscrete heat} with $w_h = \projRitz \theta -\thetah$.

\subsection{Embeddings and inverse estimates} When deriving estimates in Sections~\ref{Sec: estimate wave} and~\ref{Sec: estimates semidiscrete heat}, we will utilize (discrete) embedding results and inverse estimates for finite element functions in the upcoming error analysis. In particular, the following embedding holds:
\begin{equation}\label{embedding WNdelta}
	H^2(\Omega) \hookrightarrow  \Wonedelta \hookrightarrow L^\infty(\Omega), \quad \dimension \in \{1,2,3\},\ \delta>0.
\end{equation}
For $\phih \in \Vh$, we have
the discrete Sobolev embedding
\begin{equation} \label{discrete_Sobolev_embedding}
	\norm{\phih}_{ \Linf} + \norm{\phih}_{W^{1,6}(\Om)} \leq C \norm{\Delta_h \phih}_{\Ltwo},
\end{equation}
where $C$ is independent of $h$; see, for example,~\cite{Doe24, FujSS01,SuzF86}.
%
Furthermore, the following inverse estimates are used in the analysis (see, for example, \cite{brenner2008mathematical}) :
\begin{subequations} \label{eq:inv_estimate}
	\begin{align+}
		\norm{ \nabla \varphi_h}_{\Ltwo} &\leq C h^{-1} \norm{ \varphi_h}_{\Ltwo},  \label{eq:inv_estimate1}
		\\
		\norm{\Delta_h \varphi_h}_{\Ltwo} &\leq C h^{-1} \norm{ \nabla \varphi_h}_{\Ltwo}, \label{eq:inv_estimate2}
		\\
		\norm{\varphi_h}_{\Linf} &\leq C h^{-\dimension/p} \norm{ \varphi_h}_{L^p(\Omega)}, \label{eq:inv_estimate3}
	\end{align+}
\end{subequations}
for $\varphi_h \in \Vh$ and $p\in[1,\infty]$,
with constants independent of $h$.
\\
\indent
We also recall the following bounds for the interpolant, 
which can also be found in \cite{brenner2008mathematical}:
\begin{equation}  \label{eq:Ih_approx}
	\norm{\varphi - \Ih \varphi}_{L^p(\Om)}
	+
	h
	\norm{\varphi - \Ih \varphi}_{W^{1,p}(\Om)} \leq C h^{\ell+1}  \norm{\varphi}_{W^{\ell+1,p}(\Om)},
	\qquad \varphi \in W^{\ell+1,p}(\Om),
\end{equation}
for $2 \leq p \leq \infty$ and $1 \leq \ell \leq \eta$.
\subsection{Properties of the Ritz projection} \label{Sec: Ritz proj} In Sections~\ref{Sec: estimate wave} and~\ref{Sec: estimates semidiscrete heat}, we also heavily rely on certain properties of the Ritz projection. We collect these results here for convenience. For the purpose of estimating the approximation errors in the upcoming analysis, we need the following approximation result
for $0\leq \ell \leq \poldeg$:
\begin{align} \label{approx properties projRitz}
	%
	h \norm{\varphi - \projRitz \varphi}_{W^{1,p}(\Om)} &\leq C h^{\ell+1}  \norm{\varphi}_{W^{\ell+1,p}(\Omega)},
	\quad \varphi \in W^{\ell+1,p}(\Omega),
\end{align}
for all $2\leq p \leq \infty$; see, for example, \cite[Thm.\ 8.5.3]{brenner2008mathematical}.  In particular,
we often employ the stability bound
%
\[
\|\projRitz \varphi\|_{\Linf}
\leq C \|\varphi\|_{W^{1,\infty}(\Omega)}.
\]
Note that one could also replace the right-hand side with the $W^{1,p}$-norm for $p >\dimension$, but for better readability, we employ the above bound.
We further need the estimate
\begin{equation} \label{eq:Ritz_Linf_hhalf}
\norm{\projRitz \varphi - \varphi}_{\Linf} \leq C h^{1/2} \norm{\varphi}_{\Htwo},
\end{equation}
which is obtained by inserting the nodal interpolation operator, using the inverse estimates \eqref{eq:inv_estimate}, and the $L^\infty$-estimate in \cite[Theorem~4.4.20]{brenner2008mathematical}.

\begin{lemma} \label{Lemma: est Ritz product_v2}
Let $\delta>0$, $\dimension \in \{1,2,3\}$, and  $ \mu \in C^{\poldeg+1}(\R)$. Then, there is a constant $C>0$,
such that for all $\phih \in \Vh$ 	it holds
\begin{equation} \label{est Ritz product}
	\norm{\projRitz [ \mu(\psi_h) \, \phih ]}_{\Ltwo} \leq
	\norm{\mu(\psi_h)}_{\Linf}
	\norm{ \phih }_{\Ltwo}
	+
	h^{ \tfrac{\delta}{\dimension+\delta }}
	C(\norm{\psi_h}_{\Wonedelta})
	\norm{ \phih }_{\Ltwo},
\end{equation}
where $C$ is independent of $h$, but depends on $\norm{\psi_h}_{\Wonedelta}$.
\end{lemma}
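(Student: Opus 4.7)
The plan is to use the triangle inequality to extract the leading term with sharp constant one, and then to handle the Ritz defect by an interpolation-plus-duality argument. Writing $\omega \coloneqq \mu(\psi_h)\phi_h$, I start from
$\|\projRitz \omega\|_{\Ltwo} \le \|\omega\|_{\Ltwo} + \|(\projRitz - \Id)\omega\|_{\Ltwo}$,
and bound the first piece by H\"older's inequality to obtain
$\|\omega\|_{\Ltwo} \le \|\mu(\psi_h)\|_{\Linf}\|\phi_h\|_{\Ltwo}$,
which reproduces the leading contribution in \eqref{est Ritz product} with coefficient exactly one. It remains to produce an $h^{\delta/(\dimension+\delta)}$-small factor for the defect.

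Next I insert the nodal interpolant and use $\projRitz \Ih\omega = \Ih\omega$ (since $\Ih\omega \in \Vh$) to decompose
$(\projRitz - \Id)\omega = \projRitz(\omega - \Ih\omega) + (\Ih\omega - \omega)$.
For the pure interpolation piece I would exploit the explicit elementwise formula
\[
(\Ih\omega - \omega)\big|_K = \sum_{i \in \mathrm{nodes}(K)} \bigl[\mu(\psi_h(x_i)) - \mu(\psi_h(\cdot))\bigr]\,\phi_h(x_i)\,\lambda_i,
\]
where $\{\lambda_i\}$ are the Lagrange basis functions on $K$. Crucially, the $\phi_h$-dependence enters only through nodal values and not through its gradient. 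Combining the Morrey embedding $\Wonedelta \hookrightarrow C^{0,\delta/(\dimension+\delta)}(\Om)$ with the chain rule and the assumption $\mu \in C^{\poldeg+1}(\R)$ yields the pointwise bound $|\mu(\psi_h(x_i)) - \mu(\psi_h(x))| \lesssim h^{\delta/(\dimension+\delta)}\,C(\|\psi_h\|_{\Wonedelta})$ uniformly in $x \in K$. The elementwise inverse estimate $\|\phi_h\|_{\Linf} \lesssim h_K^{-\dimension/2}\|\phi_h\|_{\Ltwo}$ on $K$, combined with $|K|^{1/2}\simeq h_K^{\dimension/2}$, then closes the bound on $\|\Ih\omega - \omega\|_{\Ltwo}$ after summing over elements.

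The main obstacle is the remaining piece $\|\projRitz(\omega - \Ih\omega)\|_{\Ltwo}$: the naive Poincar\'e plus $H^1$-stability bound $\|\projRitz v\|_{\Ltwo} \lesssim \|\nabla v\|_{\Ltwo}$ is too crude, because the product-rule expansion of $\nabla\omega$ contains the problematic term $\mu(\psi_h)\nabla\phi_h$, yielding an $h^{-d/(\dimension+\delta)}$ loss. I would overcome this by an Aubin--Nitsche duality: given $v \in \Ltwo$ with $\|v\|_{\Ltwo}=1$, let $\varphi \in \Htwo \cap \Honezero$ solve $-\Delta\varphi = v$; Galerkin orthogonality of the Ritz projection together with the defining identity of $\projRitz$ yields
$(\projRitz(\omega - \Ih\omega),v)_{L^2} = (\nabla(\omega - \Ih\omega),\nabla \projRitz\varphi)_{L^2}$.
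Applying H\"older with the conjugate pair $L^{\dimension+\delta}$ and $L^{(\dimension+\delta)/(\dimension+\delta-1)}$, the factor $\|\nabla \projRitz\varphi\|_{L^{\dimension+\delta}}$ is controlled via the $W^{1,\dimension+\delta}$-stability of $\projRitz$ and the Sobolev embedding $\Htwo \hookrightarrow W^{1,\dimension+\delta}(\Om)$ (valid for $\dimension \in \{1,2,3\}$ and the relevant range of $\delta$), while $\|\nabla(\omega - \Ih\omega)\|_{L^{(\dimension+\delta)/(\dimension+\delta-1)}}$ is bounded by a product-rule decomposition together with the interpolation estimate \eqref{eq:Ih_approx} and inverse estimates on $\phi_h$ tuned to balance against the $W^{1,\dimension+\delta}$-regularity of $\mu(\psi_h)$. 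The powers of $h$ from the interpolation error and the inverse estimates cancel the Sobolev loss and combine to exactly $h^{\delta/(\dimension+\delta)}$; taking the supremum over $v$ yields the bound on $\|\projRitz(\omega - \Ih\omega)\|_{\Ltwo}$, and adding both contributions completes the proof of \eqref{est Ritz product}.
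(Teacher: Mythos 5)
Your leading term (with constant exactly one) and your treatment of $\|\Ih\omega-\omega\|_{\Ltwo}$, $\omega\coloneqq\mu(\psi_h)\phih$, are fine: the nodal representation is valid because $\sum_i\phih(x_i)\lambda_i=\phih$ for $\phih\in\Vh$, and combining the embedding $\Wonedelta\hookrightarrow C^{0,\delta/(\dimension+\delta)}(\Omega)$ with the local inverse estimate $\|\phih\|_{L^\infty(K)}\lesssim |K|^{-1/2}\|\phih\|_{L^2(K)}$ does give $h^{\delta/(\dimension+\delta)}C(\|\psi_h\|_{\Wonedelta})\|\phih\|_{\Ltwo}$ for that piece; this is a genuinely more elementary route than the paper's, which never splits off $\Ih\omega$ and instead estimates $(\Id-\Ih)\omega$ in $H^1$ at order $\poldeg+1$.

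The gap is in the remaining piece $\|\projRitz(\omega-\Ih\omega)\|_{\Ltwo}$. Your identity $(\projRitz(\omega-\Ih\omega),v)_{L^2}=(\nabla(\omega-\Ih\omega),\nabla\projRitz\varphi)_{L^2}$ is correct, but the moment you apply H\"older with $s=\dimension+\delta$ and bound $\|\nabla\projRitz\varphi\|_{\Lp{s}}\lesssim\|\varphi\|_{\Htwo}\lesssim 1$, all Galerkin-orthogonality cancellation is gone and the dual problem contributes no power of $h$. Tracking the exponents in your own recipe: the interpolation estimate gives $h^{\poldeg}$, the inverse estimates on $\phih$ give $h^{-(\poldeg+1-j)}$ with $j\geq 1$ in the Leibniz sum, and the Sobolev loss $\|\nabla\psi_h\|_{\Linf}\lesssim h^{-\dimension/(\dimension+\delta)}\|\psi_h\|_{\Wonedelta}$ enters once for the dominant term $j=1$; the net power is $h^{-\dimension/(\dimension+\delta)}=h^{\delta/(\dimension+\delta)-1}$, a full power of $h$ short of your claim and in fact divergent as $h\to 0$. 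The missing power is exactly the one the paper gains by applying Aubin--Nitsche to the Ritz \emph{defect}, $\|(\Id-\projRitz)\omega\|_{\Ltwo}\lesssim h\,\|(\Id-\projRitz)\omega\|_{\Hone}\lesssim h\,\|(\Id-\Ih)\omega\|_{\Hone}$, where the first inequality exploits the orthogonality of $\nabla(\omega-\projRitz\omega)$ to $\nabla\Vh$. Within your decomposition you could repair this by writing $\|\projRitz(\omega-\Ih\omega)\|_{\Ltwo}\leq\|\omega-\Ih\omega\|_{\Ltwo}+\|(\Id-\projRitz)(\omega-\Ih\omega)\|_{\Ltwo}$ and applying genuine Aubin--Nitsche to the second term, which yields the extra factor $h$ against $\|\nabla(\omega-\Ih\omega)\|_{\Ltwo}$; but to bound that gradient you still need, as the paper does, the order-$(\poldeg+1)$ interpolation estimate together with the fact that $D^{\poldeg+1}\phih$ vanishes elementwise (otherwise the term $\mu(\psi_h)D^{\poldeg+1}\phih$ destroys the count for $\poldeg\geq 2$) and the full $C^{\poldeg+1}$-regularity of $\mu$ --- your phrase ``tuned to balance'' hides precisely this step.
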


\begin{proof}
The proof can be found in Appendix~\ref{sec:appendix}.
\end{proof}

\section{Existence and uniqueness on a discretization-dependent time interval} \label{Sec: Existence}
In this section, we show that the problem has a solution on a possibly $h$-dependent time interval $[0, \finalth]$. In subsequent sections, we will carry out the estimates on this time interval with the goal of obtaining a uniform bound on $(\uh, \thetah)$ in a suitable norm that will allow us to prolong the existence to $[0,T]$. As usual, we split the errors as follows:
\begin{equation}
\begin{aligned}
	u-\uh =&\, (u-\projRitz u) +(\projRitz u -\uh) \,, \\
	\theta-\thetah=&\, (\theta-\projRitz \theta) +(\projRitz \theta -\thetah) \,,
\end{aligned}
\end{equation}
and denote the discrete errors by $\ehu\coloneqq\projRitz u -\uh$ and $\ehtheta \coloneqq\projRitz \theta - \thetah$. \\
\indent We aim to prove that the semi-discrete problem has a solution on the time interval $[0, \finalth]$, where we define $\finalth$ as follows:
\begin{equation} \label{def:finaltimeh}
\begin{aligned}
	\finalth \coloneqq \sup  \Big \{  t \in (0,T] \mid
	\	&\text{a unique solution }	(\uh, \thetah) \in C^{2}([0,t]; \Vh) \times C^1([0,t]; \Vh)
	 \text{ of
		\eqref{semi-discret problem wave}
		and
		\eqref{semi-discret problem heat}
		exists and} 
	\\
	&\,  h^{-1/2-\eps}
	(
	\norm{\errhut(s)}_{\Hone}
	+
	\norm{\Delta_h \errhu(s)}_{\Ltwo}
	)
	\leq C_0,  \\
	&\, h^{-1/2-\eps}
	\norm{\Delta_h \errhtheta(s)}_{\Ltwo}
	\leq C_0,
	\,	\text{for all } s \in [0,t]  \Big  \}
\end{aligned}
\end{equation}
for some $\eps \in (0, 1/2)$, and a constant $C_{0}>0$ independent of $h$. The particular choice of terms and norms involved in \eqref{def:finaltimeh} is motivated by the needs of deriving the estimates, as will become apparent below and in Sections~\ref{Sec: estimate wave} and~\ref{Sec: estimates semidiscrete heat}. The first claim of this section concerns the accuracy of the approximate initial data.

\begin{lemma} \label{Lemma: Initial error}
Under the assumptions of Theorem~\ref{thm main}, with the approximate initial values chosen to be  $(\uh(0), \uth(0), \thetah(0))=(\projRitz u_0, \projRitz u_1, \projRitz \theta_0)$,
we have
\begin{equation} \label{init est 1}
	\norm{\errhut(0)}_{\Hone}+\norm{\Delta_h \errhu(0)}_{\Ltwo}  \leq C h^\poldeg
\end{equation}
and
\begin{equation}\label{init est 2}
	\|\Dh \errhtheta(0)\|_{\Ltwo} \leq C h^\poldeg.
\end{equation}
\end{lemma}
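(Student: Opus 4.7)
The plan is a direct computation: under hypothesis \eqref{eq:init_choice_space} of Theorem~\ref{thm main}, the approximate initial data are by construction the Ritz projections of the exact ones, so both discrete errors vanish identically at $t=0$ and the two bounds \eqref{init est 1}--\eqref{init est 2} hold trivially.

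Concretely, I would first note that $\projRitz$ is a time-independent bounded linear operator on $\Honezero$, and hence commutes with $\partial_t$. Evaluating $\errhu = \projRitz u - \uh$ and $\errhut = \partial_t \errhu$ at $t=0$ therefore gives
\begin{equation*}
\errhu(0) = \projRitz u_0 - \uh(0) = \projRitz u_0 - \projRitz u_0 = 0,
\end{equation*}
\begin{equation*}
\errhut(0) = \projRitz u_1 - \uht(0) = \projRitz u_1 - \projRitz u_1 = 0,
\end{equation*}
and analogously $\errhtheta(0) = \projRitz \theta_0 - \thetah(0) = 0$. Applying the linear operator $\Delta_h \colon \Vh \to \Vh$ to these identically zero elements of $\Vh$ yields $\Delta_h \errhu(0) = 0$ and $\Delta_h \errhtheta(0) = 0$. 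Consequently, each term appearing in \eqref{init est 1}--\eqref{init est 2} is exactly zero, so both estimates hold (in fact with constant $C=0$, hence certainly with any positive multiple of $h^\poldeg$).

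There is no real obstacle in the argument; the lemma is essentially a consistency record that ensures a well-defined starting point for the continuity argument built into \eqref{def:finaltimeh}, namely that the thresholds defining $\finalth$ are satisfied at $t=0$. The reason it is isolated as a separate statement is precisely that the vanishing above is \emph{the} motivation for using the Ritz projection in \eqref{eq:init_choice_space} rather than, say, the nodal interpolant $\Ih$: an alternative initial approximation would leave nontrivial residuals in $\Delta_h \errhu(0)$ and $\Delta_h \errhtheta(0)$ that could only be controlled via \eqref{approx properties projRitz} combined with the inverse estimate \eqref{eq:inv_estimate2}, producing the order reduction mentioned in the discussion following Theorem~\ref{thm main}.
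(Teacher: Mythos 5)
Your proposal is correct and matches the paper's own proof, which likewise observes that the choice $(\uh(0),\uth(0),\thetah(0))=(\projRitz u_0,\projRitz u_1,\projRitz\theta_0)$ forces $\errhu(0)=\errhut(0)=\errhtheta(0)=0$ so that the bounds hold trivially. The additional remarks on why the Ritz projection (rather than the interpolant) is chosen are consistent with the discussion following Theorem~\ref{thm main}, but are not needed for the proof itself.
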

\begin{proof}
With our choice of the initial data we immediately have	$\errhu(0)=\errhut(0)=\errhtheta(0)=0$, and thus the statement trivially holds.
\end{proof}

We next tackle the existence of a unique solution of the semi-discrete system on a possibly $h$-dependent time interval, which will allow us to conclude that $\finalth>0$.

\begin{proposition} \label{Prop: finalth positive}
Under the assumptions of Theorem~\ref{thm main}, we have $\finalth>0$.
\end{proposition}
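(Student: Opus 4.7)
The plan is to recast the semi-discrete system \eqref{semi-discret problem wave}--\eqref{semi-discret problem heat} as a coupled first-order ODE system on the finite-dimensional space $\Vh$, apply the Picard--Lindel\"of theorem to obtain a unique local solution, and then use the vanishing of the initial discrete errors (Lemma~\ref{Lemma: Initial error}) together with continuity in $t$ to verify the quantitative bounds defining $\finalth$ on a small non-empty interval.

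Setting $v_h = \uht$ and collecting all the $\uhtt$-terms contained in $\calN$, equation \eqref{discrete wave} takes the form
\begin{equation*}
	((1 + 2\kW(\thetah)\uh + 2\kK(\thetah)\, v_h)\, \dot v_h,\, \phih)_{L^2} = G(\uh, v_h, \thetah, \fph;\, \phih) \qquad \forall \phih \in \Vh,
\end{equation*}
where $G$ gathers all remaining contributions, none of which involve $\dot v_h$. Together with $\dot \uh = v_h$ and the standard mass form of \eqref{discrete heat}, this constitutes an ODE system in $\Vh^3$ whose right-hand sides are locally Lipschitz by virtue of $q, \beta$ being polynomials and $\kW, \kK \in \Czerooneloc(\R)$, $\alpha \in \Coneoneloc(\R)$. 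The only obstruction to invoking Picard--Lindel\"of is thus the pointwise invertibility of the coefficient $1 + 2\kW(\thetah)\uh + 2\kK(\thetah) v_h$. At $t=0$ the approximate data equal the Ritz projections of $(u_0, u_1, \theta_0)$; combining the $\Linf$-stability and the approximation property \eqref{eq:Ritz_Linf_hhalf} of $\projRitz$ with the local Lipschitz continuity of $\kW, \kK$ gives
\begin{equation*}
	\| \kW(\thetah(0))\uh(0) - \kW(\theta(0)) u(0) \|_{\Linf} + \| \kK(\thetah(0)) v_h(0) - \kK(\theta(0)) u_1 \|_{\Linf} \longrightarrow 0 \text{ as } h \to 0,
\end{equation*}
so that the smallness assumption \eqref{smallness condition u theta r} at $t=0$ (with $r$ taken small enough) yields $|1 + 2\kW(\thetah(0))\uh(0) + 2\kK(\thetah(0))v_h(0)| \geq 1/2$ pointwise for $h \leq h_0$. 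Continuity in time preserves this pointwise lower bound on some interval $[0, \tilde t_h]$, on which Picard--Lindel\"of delivers a unique $(\uh, \thetah) \in C^2([0, \tilde t_h]; \Vh) \times C^1([0, \tilde t_h]; \Vh)$.

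To conclude $\finalth > 0$, we note that by Lemma~\ref{Lemma: Initial error} the discrete errors $\errhu, \errhut, \errhtheta$ all vanish at $t=0$. Since all norms on $\Vh$ are equivalent, the maps $t \mapsto \| \errhut(t) \|_{\Hone} + \| \Dh \errhu(t) \|_{\Ltwo}$ and $t \mapsto \| \Dh \errhtheta(t) \|_{\Ltwo}$ are continuous on $[0, \tilde t_h]$ and vanish at $0$; hence for any fixed $C_0 > 0$ one finds $t^{**} \in (0, \tilde t_h]$ on which both are bounded by $C_0 h^{1/2+\eps}$, so $\finalth \geq t^{**} > 0$. The main obstacle is the invertibility argument for the modified mass operator, which is precisely what the smallness assumption on $\kW(\theta)u$ and $\kK(\theta)\ut$ is designed to ensure at $t=0$; continuity then propagates it to a small time interval.
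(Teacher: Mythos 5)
Your proposal is correct and follows essentially the same route as the paper: a first-order ODE reformulation on $\Vh^3$, invertibility of the modified mass operator $1+2\kW(\thetah)\uh+2\kK(\thetah)\uht$ near $t=0$ via the smallness assumption combined with Ritz-projection approximation bounds, Picard--Lindel\"of for local existence, and then the vanishing initial errors of Lemma~\ref{Lemma: Initial error} plus continuity and norm equivalence on $\Vh$ to verify the bounds in \eqref{def:finaltimeh} on a short interval. The only cosmetic difference is that the paper encodes the non-degeneracy condition into an open set $U_h$ on which the vector field is defined, whereas you check the pointwise lower bound at $t=0$ and propagate it by continuity; these are equivalent.
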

\begin{proof}
The statement will follow by considering a first-order rewriting of the system  and applying on it a local version of the Picard--Lindel\"of theorem on the open set
\begin{equation}\label{def_Uh}
	\begin{aligned}
		U_{h} \coloneqq \Bigl \{ (\uh,  \uht, \thetah) \in  V_h^3:& \
		\norm{\kW(\thetah)\uh}_{\Linf}
		+
		\norm{\kK(\thetah)\uht }_{\Linf}
		< r +\delta \Bigr \}
	\end{aligned}
\end{equation}
with $\delta>0$ to be determined below,
and radius $r$ from Theorem~\ref{thm main}.
To see that the initial values belong to $U_h$, we can uniformly bound $\thetah(0)$ using the inverse estimate in \eqref{eq:inv_estimate3} and discrete embedding \eqref{discrete_Sobolev_embedding} as follows:
\begin{equation}	\label{eq:thetat_theta_Linf}
	\begin{aligned}
		\|\thetah (0)  - \theta(0) \|_{\Linf} 
		 \lesssim&\,h^{-\dimension/6} 
 		 \| \ehtheta(0)\|_{\Lsix} 
		 +\|\projRitz \theta(0) - \theta(0)\|_{\Linf}
		 \\
		\lesssim&\,h^{-\dimension/6}
		  \| \nabla \ehtheta(0)\|_{\Ltwo} 
		 +\|\projRitz \theta (0)  - \theta (0) \|_{\Linf}
		\\
		\lesssim&\,  h^{-\dimension/6} \|\Delta_h \ehtheta (0)\|_{\Ltwo}+h^{1/2} \|\theta\|_{\LinftHtwo} \leq  C h^{\eps} ,
	\end{aligned}
\end{equation}
where we have used Lemma~\ref{Lemma: Initial error} in the last step.
Considering the $\uh(0)$ and $\uht(0)$ terms, we similarly have
\begin{equation} \label{eq:h_uh_Linf}
	\begin{aligned}
		\|\uh (0) - u (0) \|_{\Linf}
		&\lesssim
		h^{-\dimension/6 }\|\Delta_h \ehu (0)\|_{\LinftLtwo}+ h^{1/2} \|u\|_{\LinftHtwo}\leq  C h^{\eps},
		\\
		\|\uht(0) - \ut (0)\|_{\Linf}
		&\lesssim
		h^{-\dimension/6} \|\nabla \pt \ehu(0)\|_{\Ltwo} + h^{1/2} \|\ut\|_{\LinftHtwo} \leq  C h^{\eps},
	\end{aligned}
\end{equation}
using \eqref{eq:Ritz_Linf_hhalf}. Combining the three estimates yields
\begin{equation} \label{eq:k_u_products_Linf}
	\begin{aligned}
		\norm{\kW(\thetah(0))\uh(0) - \kW(\theta(0))u(0)}_{\Linf} &\leq C h^{\eps} ,
		\\
		\norm{\kK(\thetah(0))\uht(0) - \kK(\theta(0))\ut(0) }_{\Linf} &\leq C h^{\eps},
	\end{aligned}
\end{equation}
and thus $(\solh(0), \pt \solh(0), \thetah(0)) \in U_{h}$ for any $\delta$
if $h \leq h_0$ is small enough.

To state the semi-discrete problem in a compact manner, we introduce the operator  $\Lambda_h$ defined by
\begin{equation}
	\ip{\Lambda_h(\uh, \uht, \thetah) \varphi_h}{\psi_h} \coloneqq 	\ip{(1+2\kW(\thetah)\uh+2\kK(\thetah)\uht) \varphi_h}{\psi_h}
\end{equation}
for $\varphi_h,\psi_h \in \Vh$.	Further, given $\mu \in C^{0,1}(\R)$, we introduce the operator
\begin{equation}
	\begin{aligned}
		\ip{A_h (\mu(\thetah))\varphi_h}{\psi_h} \coloneqq &\,(\nabla \varphi_h, \nabla(\mu(\thetah) \psi_h))_{L^2}.
	\end{aligned}
\end{equation}
The 
semi-discrete problem can then be written as (with $L^2$-projection $\pi_h$)
\begin{equation} \label{rewritten_problem}
	\begin{aligned}
		\Lambda_h(\uh, \uht, \thetah)\pt^2 \solh
		=&\, \begin{multlined}[t]
			A_h(q(\thetah))\solh
			+
			A_h(\beta(\thetah)) \pt \solh
			\\\hspace*{1.8cm}	- 2 \pi_h \bigl(
			\kW(\thetah)(\uht)^2 + \ell \nabla \uh \cdot \uht
			\bigr)				-
			f_h,\end{multlined}\\
		\thetaht=&\,-\kappa \Delta_h \thetah-\nu \thetah + \pi_h \calQ(\uh, \uht, \thetah)  .
	\end{aligned}
\end{equation}
Note that the operator $\Lambda_h$ is invertible on $U_h$ for small enough $r$,
since we can find $\gamma,\delta>0$, independent of $h$, such that
\begin{equation} \label{nondegeneracy}
	1+2\kW(\thetah)\uh+2\kK(\thetah)\uht \geq \gamma>0.
\end{equation}

\indent Therefore, the semi-discrete problem can be further rewritten as a first-order system for $\boldsymbol{v}_h \coloneqq (\solh, \pt \solh, \thetah)^T$:
\begin{equation} \label{rewritten_problem first order}
	\left\{	\begin{aligned}
		\,	\pt  \boldsymbol{v}_h =&\, F (\boldsymbol{v}_h), \\
		\,	\boldsymbol{v}_h(0)=&\, (\uzeroh, \uoneh, \thetazeroh)^T,
	\end{aligned} \right.
\end{equation}
where the right-hand side is given by
\begin{equation} \label{def F}
	\begin{aligned}
		&F ((\solh, \pt \solh, \thetah)^T) \\
		\coloneqq &\,\begin{multlined}[t] \Bigl( \uht,\ \left(\Lambda_h(\uh, \uht, \thetah)\right)^{-1} \left(A_h(q(\thetah))\solh
			+
			A_h(\beta(\thetah)) \pt \solh
			-
			\pi_h \bigl( 2\kW(\thetah)(\uht)^2 \bigr)\right.
			\\
			\left.- 	\pi_h \bigl( 2 \ell \nabla \uh \cdot \uht	 \bigr)			-
			f_h \vphantom{A_h}\right), \ -\kappa \Delta_h \thetah-
			\nu \thetah + \pi_h \calQ(\uh, \uht, \thetah) \Bigr)^T.
		\end{multlined}
	\end{aligned}
\end{equation}
Furthermore, system  \eqref{rewritten_problem first order} has a locally Lipschitz continuous right-hand side \eqref{def F}. Indeed, Lipschitz continuity of the right-hand side follows by the fact that $V_h$ is a finite-dimensional space in which we can use inverse estimates
\eqref{eq:inv_estimate1}--\eqref{eq:inv_estimate2}.   \\
\indent Thus by the local version of the Picard--Lindel\"of theorem, a \sloppy unique solution $(\solh, \thetah) \in C^2([0,T]; V_h) \times C^1([0,T]; V_h)$ of
\eqref{rewritten_problem}, supplemented with approximate initial data, exists on $[0,\tilde{t}]$ for some $\tilde{t}>0$.
Since the initial errors in Lemma~\ref{Lemma: Initial error} in fact vanish,
we have by the continuity and the equivalence of norms on $V_h^3$ that the errors $\errhu$ and $\errhtheta$
still satisfy the bounds in \eqref{def:finaltimeh} for a short time.
Therefore, we conclude that $\finalth>0$.
\end{proof}
We also prove two uniform boundedness results on $[0, \finalth]$ that will be useful in the next step of the error analysis.
\begin{lemma} \label{lemma: uniform bounds}
Let the assumptions of Theorem~\ref{thm main} hold. Then the following bounds hold on $[0, \finalth]$:
\begin{equation}
	\|\uh\|_{\LinftLinf} + \|\nabla \uh\|_{\LinftLinf}+ \|\uht\|_{\LinftLinf} \lesssim 1
\end{equation}
and
\begin{equation}
	\|\thetah\|_{\LinftLinf}+\|\thetah\|_{\LinftWdelta}+\|\thetaht\|_{\LinftLp{3}} \lesssim 1.
\end{equation}
\end{lemma}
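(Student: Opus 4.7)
The plan is to decompose each semi-discrete quantity into its Ritz projection and a discrete error, $\uh = \projRitz u - \errhu$ and $\thetah = \projRitz \theta - \errhtheta$, valid on $[0, \finalth]$. The Ritz projection parts will be uniformly controlled using the $W^{1,p}$-stability of $\projRitz$ implied by \eqref{approx properties projRitz} together with the regularity $(u, \theta) \in \Xu \times \Xtheta$: since $u$, $\ut$, and $\theta$ all lie in $L^\infty_t(\Woneinf)$, the quantities $\projRitz u$, $\nabla \projRitz u$, $\projRitz \ut$, $\projRitz \theta$, and $\nabla \projRitz \theta$ are uniformly bounded in $\Linf$, and hence in $\Wonedelta$ through the embedding $\Woneinf \hookrightarrow \Wonedelta$.

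For the discrete-error pieces, I would invoke the bounds built into the definition \eqref{def:finaltimeh} of $\finalth$, combined with the discrete Sobolev embedding \eqref{discrete_Sobolev_embedding} and the inverse estimate \eqref{eq:inv_estimate3}. For instance, $\|\errhu\|_{\Linf} \lesssim \|\Delta_h \errhu\|_{\Ltwo} \lesssim h^{1/2+\eps}$, while $\|\nabla \errhu\|_{\Linf} \lesssim h^{-\dimension/6}\|\nabla \errhu\|_{\Lsix} \lesssim h^{-\dimension/6}\|\Delta_h \errhu\|_{\Ltwo} \lesssim h^{\eps}$, which is bounded since $\dimension \leq 3$ and $\eps>0$; the estimate for $\uht$ proceeds analogously using $\|\errhut\|_{\Hone} \leq C_0 h^{1/2+\eps}$. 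The same chain yields $\|\errhtheta\|_{\Linf} \lesssim h^{1/2+\eps}$, and for the $\Wonedelta$-bound the constraint $\dimension + \delta \leq 6$ allows me to pass through $\Wonesix$ and use $\|\errhtheta\|_{\Wonedelta} \lesssim \|\errhtheta\|_{\Wonesix} \lesssim \|\Delta_h \errhtheta\|_{\Ltwo}$ via \eqref{discrete_Sobolev_embedding}.

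The main obstacle is the $\LinftLp{3}$-bound on $\thetaht$, since $\finalth$ does not directly control $\errhthetat$. My plan is to read $\thetaht$ pointwise from the semi-discrete heat equation \eqref{discrete heat} as
\[
\thetaht = \kappa \Delta_h \thetah - \nu \thetah + \pi_h \calQ(\uh, \uht, \thetah).
\]
The contributions of $\nu \thetah$ and $\pi_h \calQ(\uh, \uht, \thetah)$ to the $\Lp{3}$-norm follow from the $\Linf$ bounds already established and the $L^p$-stability of the $L^2$-projection $\pi_h$ on quasi-uniform meshes. The critical $\Delta_h \thetah$ term I handle via the identity $\Delta_h \projRitz \theta = \pi_h \Delta \theta$ (which holds since $\theta \in \Htwo \cap \Honezero$), giving the split $\Delta_h \thetah = \pi_h \Delta \theta - \Delta_h \errhtheta$. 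The discrete-error piece obeys $\|\Delta_h \errhtheta\|_{\Lp{3}} \lesssim h^{-\dimension/6}\|\Delta_h \errhtheta\|_{\Ltwo} \lesssim h^{\eps}$ by a standard $L^2$-to-$L^3$ inverse inequality, while $\pi_h \Delta \theta$ is rewritten through the continuous heat equation $\kappa \Delta \theta = \thetat + \nu \theta - \calQ(u, \ut, \theta)$, after which $\Lp{3}$-boundedness follows from $\thetat \in L^\infty_t(\Woneinf)$ (part of $\Xtheta$), the analogous bounds on $\theta$ and $\calQ(u, \ut, \theta)$, and the $L^p$-stability of $\pi_h$.
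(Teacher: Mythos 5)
Your proposal is correct and follows essentially the same route as the paper: split into Ritz projection plus discrete error, control the projection parts by the $W^{1,\infty}$-stability of $\projRitz$ and the regularity of $(u,\theta)$, control the error parts through the bounds built into the definition of $\finalth$ combined with the discrete Sobolev embedding and the $h^{-\dimension/6}$ inverse estimates, and recover $\thetaht$ pointwise from the semi-discrete equation using $\Delta_h \projRitz = \pi_h \Delta$ and the $L^3$-stability of $\pi_h$. The only (harmless, arguably slightly cleaner) deviation is that you additionally substitute the continuous heat equation to bound $\pi_h \Delta\theta$ in $\Lp{3}$ via $\thetat$, $\theta$, and $\calQ(u,\ut,\theta)$, whereas the paper estimates $\norm{\pi_h\Delta\theta}_{\Lp{3}}$ directly from the assumed regularity of $\theta$.
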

\begin{proof}
The statement follows by a repeated use of the stability properties of the Ritz projection
and inverse estimates \eqref{eq:inv_estimate1}--\eqref{eq:inv_estimate3}. We have already shown in the previous proof that for $s \in [0, \finalth]$
\begin{equation}
	\begin{aligned}
		\|\uh(s)\|_{\Linf}+	\|\uht(s)\|_{\Linf}
		\lesssim 1.
	\end{aligned}
\end{equation}
Further, by \eqref{discrete_Sobolev_embedding}, we have
\begin{equation}
	\begin{aligned}
		\|\nabla \uh(s)\|_{\Linf} & \lesssim \|\Rhu(s)\|_{\Woneinf} + \|\errhu(s)\|_{\Woneinf}
		 \lesssim
		\|u(s)\|_{\Woneinf} + h^{-\dimension/6}\|\Delta_h \errhu(s)\|_{\Ltwo} .
	\end{aligned}
\end{equation}
Similarly, for $\dimension+\delta  \leq 6$, by \eqref{discrete_Sobolev_embedding}, we have
\begin{equation}
	\begin{aligned}
		\|\thetah(s)\|_{\Linf}
		&\leq
		\|\Rhtheta (s)\|_{\Wonedelta}
		+
		\|\errhtheta(s)\|_{\Wonedelta}
		 \lesssim
		\|\Rhtheta (s)\|_{\Wonedelta}
		+
		\|\Delta_h \errhtheta(s)\|_{\Ltwo}.
	\end{aligned}
\end{equation}
Lastly, inserting the equation \eqref{coupled_problem parabolic} for $\thetah$, the stability of the $L^2$-projection $\pi_h$ in $L^3$, and the relation
$	\Delta_h \projRitz = \pi_h\Delta$, we obtain
\begin{align}
	\|\thetaht(s)\|_{\Lp{3}}
	&\lesssim
	\|	\Delta_h \thetah\|_{\Lp{3}}
	+
	\|	\thetah 	\|_{\Lp{3}}
	+
	\| \calQ(\uh, \uht, \thetah)\|_{\Lp{3}}
	\\
	&\lesssim
	h^{-\dimension/6} \|	\Delta_h \errhtheta(s)\|_{\Lp{2}}
	+
	\| \pi_h\Delta \theta \|_{\Lp{3}}
	+
	\|	\thetah 	\|_{\Lp{3}}
	\\
	&\qquad	+
	C( \|\uh(s)\|_{\Linf} , \|\uht(s)\|_{\Linf} , \|\thetah(s)\|_{\Linf}   )
	\lesssim 1 \,.
\end{align}
The definition of $\finalth$ in \eqref{def:finaltimeh} closes the proof.
\end{proof}

As a corollary of Lemma~\ref{lemma: uniform bounds}, on account of the assumptions made on the temperature-dependent medium parameters, we also have the following uniform bounds.
\begin{corollary} \label{corollary: boundedness} Under the assumptions of Theorem~\ref{thm main}, we have
\begin{equation}
	\begin{multlined}
		\|\tbeta(\thetah)\|_{\LinftLinf}+\|q(\thetah)\|_{\LinftLinf}
		+	\|\kW(\thetah)\|_{\LinftLinf}+\|\kK(\thetah)\|_{\LinftLinf}\\+\|\alpha(\thetah)\|_{\LinftLinf} \lesssim 1
	\end{multlined}
\end{equation}
on $[0, \finalth]$.
\end{corollary}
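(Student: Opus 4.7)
The plan is to read the corollary as a direct chain-rule-style consequence of the uniform bound $\|\thetah\|_{\LinftLinf} \lesssim 1$ that Lemma~\ref{lemma: uniform bounds} provides on $[0,\finalth]$, combined with the local regularity assumptions on the coefficient functions collected in Section~\ref{Sec: Assumptions functions}. I would first fix a constant $M>0$, independent of $h$, such that $\|\thetah\|_{\LinftLinf([0,\finalth])}\leq M$, so that for a.e.\ $(x,t)\in \Omega\times [0,\finalth]$ the value $\thetah(x,t)$ lies in the compact interval $[-M,M]\subset \R$.

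Next I would use the fact that each of the functions $q,\beta$ is a polynomial on $\R$, hence continuous and therefore bounded on $[-M,M]$; clearly $\tbeta=\beta-\beta_0$ inherits this. The functions $\kW,\kK$ belong to $\Czerooneloc(\R)$ and $\alpha$ to $\Coneoneloc(\R)$, so all three are in particular continuous and thus bounded on $[-M,M]$. Consequently, for $w\in\{q,\tbeta,\kW,\kK,\alpha\}$ and a.e.\ $(x,t)$,
\begin{equation*}
|w(\thetah(x,t))| \leq \sup_{|s|\leq M} |w(s)| \leq C_w,
\end{equation*}
with $C_w$ independent of $h$, which yields the claimed $L^\infty_t(L^\infty)$ bounds upon taking the essential supremum over $(x,t)\in \Omega\times [0,\finalth]$ and summing the five estimates.

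There is no real obstacle here: the result is a boundedness-preservation statement under continuous composition, and the only nontrivial ingredient, namely the $h$-uniform $L^\infty_t(L^\infty)$ control on $\thetah$, has already been secured in Lemma~\ref{lemma: uniform bounds}. I would keep the proof to a couple of lines, mainly to record explicitly that the polynomial and $C^{0,1}_{\mathrm{loc}}/C^{1,1}_{\mathrm{loc}}$ assumptions made on the medium parameters in Section~\ref{Sec: Assumptions functions} are exactly what is needed to convert this pointwise bound on $\thetah$ into uniform bounds on the composed coefficients.
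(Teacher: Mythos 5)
Your argument is correct and is precisely the reasoning the paper intends: the corollary is stated as an immediate consequence of the uniform bound $\|\thetah\|_{\LinftLinf}\lesssim 1$ from Lemma~\ref{lemma: uniform bounds} together with the continuity (polynomial, $\Czerooneloc$, $\Coneoneloc$) of the coefficient functions on the resulting compact range. Nothing further is needed.
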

In the next step of the \emph{a priori} error analysis, we wish to derive a uniform estimate for $(\uh, \thetah)$ that will allow us to show that \[(\solh(\finalth), \pt \solh(\finalth), \thetah(\finalth)) \in U_h,\]
which will lead us to the conclusion that $(\uh, \thetah)$ exists on $[0,T].$ We focus first on the acoustic subproblem and estimating $\errhu = \Rhu-\uh$.

\section{Estimates for the semi-discrete wave subproblem} \label{Sec: estimate wave}
In this section, our aim is to derive an energy estimate for $\errhu= \projRitz u-\uh$ on $[0, \finalth]$ with $\finalth$ defined in \eqref{def:finaltimeh}. Toward estimating $\errhu$, we observe first that the Ritz projection of $u$ satisfies
\begin{equation}
\begin{aligned}
	&(\Rhptt, \phih)_{L^2}+\beta_0  a(\Rhpt, \phih)\\
	=&\,\begin{multlined}[t]
		( \Delta u,  q(\theta) \phih)_{L^2}+	( \Delta \pt u,  \tbeta(\theta) \phih)_{L^2}
		+(\calN(\Rhu, \Rhut, \Rhutt, \nabla \Rhu, \nabla \Rhut, \theta), \phih)+(\fp, \phih)_{L^2}+	(\wtdeltap, \phih)_{L^2},
	\end{multlined}
\end{aligned}
\end{equation}
where the defect is given by
\begin{equation} \label{defect u}
\begin{aligned}
	&(\wtdeltap, \phih)_{L^2}\\ \coloneqq &\, \begin{multlined}[t] (\Rhptt-\ptt, \phih)_{L^2}
		\!-(\calN(u, \ut, \utt, \nabla u, \nabla \ut, \theta)-\calN(\Rhu, \Rhut, \Rhutt, \nabla \Rhu, \nabla \Rhut, \theta), \phih)_{L^2}.
	\end{multlined}
\end{aligned}
\end{equation}
We can estimate the defect using the following result.
\begin{lemma}\label{lemma: est defect u}
Under the assumptions of Theorem~\ref{thm main}, the following estimate holds:
\begin{equation}
	\begin{aligned}
		\|\wtdeltap\|_{\LtwoLtwo} \leq C (\norm{u}_{\Xu}) h^{\poldeg}.
	\end{aligned}
\end{equation}
\end{lemma}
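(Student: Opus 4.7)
The plan is to split the defect \eqref{defect u} into the pure Ritz part $\Rhptt - \ptt$ and the $\calN$-difference part, and to bound each separately by combining Ritz-projection approximation properties with H\"older's inequality and the regularity of $u$ encoded in $\Xu$.

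For the first piece, I would use that $\projRitz$ commutes with $\partial_t$, so that $\Rhptt = \projRitz \utt$, and then invoke the standard $L^2$ Ritz approximation bound to obtain
\begin{equation*}
\|\Rhptt - \utt\|_{\LtwoLtwo} \lesssim h^{\poldeg+1} \|\utt\|_{L^2(\Hpoldegone)},
\end{equation*}
which is already one order better than what is needed. For the nonlinearity difference, I would expand $\calN$ according to \eqref{def calN} to obtain a sum of products of the form $\kW(\theta)(u\utt - \Rhu\Rhutt)$, $\kW(\theta)(\ut^2 - (\Rhut)^2)$, $\kK(\theta)(\ut\utt - \Rhut\Rhutt)$ and $\ell(\nabla u \cdot \nabla \ut - \nabla\Rhu \cdot \nabla \Rhut)$. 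For each bilinear expression $ab - \tilde a \tilde b$ I apply the telescoping identity $(a-\tilde a)\,b + \tilde a\,(b-\tilde b)$. The coefficients $\kW(\theta)$ and $\kK(\theta)$ are uniformly bounded in $\LinfLinf$ because $\theta \in \Xtheta \hookrightarrow \LinfLinf$ and $\kW,\kK \in \Czerooneloc(\R)$, while $\Rhu$ and $\Rhut$ are uniformly bounded in $\LinfLinf$ thanks to $u,\ut \in \LinfWoneinf$ and the $W^{1,\infty}$-stability of $\projRitz$ from \eqref{approx properties projRitz}.

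The H\"older exponents will then be chosen within each product to balance smoothness against approximation order. For instance, the factor $(u - \Rhu)\utt$ I would estimate using $L^\infty(\Lsix) \cdot L^2(\Lthree)$, so that the $H^1$-Ritz estimate combined with the embedding $\Hone \hookrightarrow \Lsix$ yields the rate $h^{\poldeg}$, while $\Hpoldegone \hookrightarrow \Lthree$ controls $\utt$. The term $\Rhu(\utt - \Rhutt)$ will be split as $\LinfLinf \cdot \LtwoLtwo$ and directly produces the better rate $h^{\poldeg+1}$. The square $\ut^2 - (\Rhut)^2$ factors as $(\ut - \Rhut)(\ut + \Rhut)$ and is handled analogously, and the $\kK$-term is treated just like the $u\utt$-term. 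Finally, the gradient term decomposes into $\nabla(u - \Rhu) \cdot \nabla \ut$ and $\nabla \Rhu \cdot \nabla(\ut - \Rhut)$, both estimated via the $H^1$-Ritz bound $\|\nabla(u - \Rhu)\|_{\LinfLtwo} \lesssim h^{\poldeg}$ paired with an $L^2(\Linf)$-bound on $\nabla \ut$ obtained from $\ut \in \LinfWoneinf$.

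The main obstacle I anticipate lies in this gradient pair: only the $H^1$-Ritz estimate is available for $\nabla(u - \Rhu)$, and since the assumed regularity of $u$ does not include $W^{\poldeg+1, \infty}$, one cannot hope to do better than $O(h^{\poldeg})$ on these summands. This pair therefore dictates the final order of convergence, whereas every other summand in the defect is at worst of the same order (and often one order better). Summing all contributions and absorbing the resulting data-dependent constants into $C(\norm{u}_{\Xu})$ then yields the stated bound.
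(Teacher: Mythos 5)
Your proposal is correct and follows essentially the same route as the paper: split off the Ritz-projection defect $\Rhptt-\utt$, telescope the difference of the $\calN$-terms into products of one error factor and one bounded factor, use the $L^\infty$-boundedness of $\kW(\theta)$, $\kK(\theta)$, $\Rhu$, $\Rhut$ together with H\"older's inequality, and conclude with the $W^{1,p}$ approximation property of the Ritz projection. Your observation that the gradient pair $\nabla(u-\Rhu)\cdot\nabla\ut$ limits the rate to $h^{\poldeg}$ matches the paper's own discussion in Remark~\ref{rem:L2_defect_u}; the only differences are immaterial choices of H\"older exponents (e.g.\ $\LinftLsix\cdot\LtwotLp{3}$ versus the paper's $\LinfLtwo\cdot\LtwoLinf$).
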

\begin{proof}
The proof follows by rewriting the difference of the $\calN$ terms as follows:
\begin{equation}
	\begin{aligned}
		& \calN(u, \ut, \utt, \nabla u, \nabla \ut, \theta)-\calN(\Rhu, \Rhut, \Rhutt, \nabla \Rhu, \nabla \Rhut, \theta) \\
		& = 2\kW(\theta)((u-\Rhu) \utt + \Rhu(\utt-\Rhutt)) + 2\kW(\theta)(\ut-\Rhut)(\ut+\Rhut) \\ &\quad
		+2\kK(\theta)((\ut-\Rhut) \utt + \Rhut (\utt-\Rhutt)) \\ & \quad
		+2 \ell \nabla (u-\Rhu) \cdot \nabla \ut +2 \ell \nabla \Rhu \cdot \nabla (\ut-\Rhut)
	\end{aligned}
\end{equation}
and using the properties of the Ritz projection. Indeed, we have
\begin{equation}
	\begin{aligned}
		&\|2\kW(\theta)((u-\Rhu) \utt + \Rhu(\utt-\Rhutt)) 
		 + 2\kW(\theta)(\ut-\Rhut)(\ut+\Rhut)\|_{\LtwoLtwo} \\
		\lesssim &\,
			\|\kW(\theta)\|_{\LinfLinf}\|u-\Rhu\|_{\LinfLtwo}\|\utt\|_{\LtwoLinf}\\&\quad
			+\|\Rhu\|_{\LinfLinf}\|\utt-\Rhutt\|_{\LtwoLtwo}\\&\quad
			+\|\kW(\theta)\|_{\LinfLinf} \|\ut-\Rhut\|_{L^2(L^4(\Omega))}\|\ut+\Rhut\|_{L^2(L^4(\Omega))}.
		\end{aligned}
	\end{equation}
	Next,
	\begin{equation}
		\begin{aligned}
			&\|2\kK(\theta)((\ut-\Rhut) \utt + \Rhut (\utt-\Rhutt)) \|_{\LtwoLtwo} \\
			\lesssim &\, \|\kK(\theta)\|_{\LinfLinf} \|\ut-\Rhut\|_{\LinfLtwo}\|\utt\|_{\LtwoLinf}
			 +\|\Rhut\|_{\LinfLinf}\|\utt-\Rhutt\|_{\LtwoLtwo}.
		\end{aligned}
	\end{equation}
	Finally,
	\begin{equation}
		\begin{aligned}
			&\|\ell \nabla (u-\Rhu) \cdot \nabla \ut +\ell \nabla \Rhu \cdot \nabla (\ut-\Rhut) \|_{\LtwoLtwo} \\
			\lesssim&\, \|\nabla (u-\Rhu)\|_{\LtwoLtwo}\|\nabla \ut\|_{\LinfLinf}
			 + \|\nabla \Rhu\|_{\LinfLinf}\|\ut-\Rhut\|_{\LtwoLtwo}.
		\end{aligned}
	\end{equation}
	Combining the bounds and relying on \eqref{approx properties projRitz} leads to the claim.
\end{proof}
\noindent Hence, the error $\ehp = \Rhp  - \ph$ satisfies the following parabolic problem:
\begin{equation} \label{eq ehu}
	\begin{aligned}
		(\ehutt, \phih)_{L^2}
		+
		\beta_0 a(\ehtu, \phih)
		=&\, (\calFhu, \phih)_{L^2},
	\end{aligned}
\end{equation}
with the right-hand side given by
\begin{equation} \label{def calFhu}
	\begin{aligned}
			(\calFhu, \phih)_{L^2}
		\coloneqq &\, 
		(\wtdeltap, \phih)_{L^2}+(\fu-\fuh, \phih)_{L^2}
		+
		(\Delta u, q(\theta) \phih)_{L^2}
		-
		(\Delta_h \uh, \projRitz [q(\thetah) \phih])_{L^2}
		\\
		&  +
		(\Delta \ut, \tbeta(\theta) \phih)_{L^2}
		-
		(\Delta_h \uht, \projRitz [\tbeta (\thetah) \phih])_{L^2}\\
		&\qquad + \big(\calN(\uh, \uht, \uhtt, \nabla \uh, \nabla \uht, \thetah),\phih\big)_{L^2}\\
		& \qquad\qquad-\big(\calN(\Rhu, \Rhut, \Rhutt, \nabla \Rhu, \nabla \Rhut, \theta), \phih\big)_{L^2}.
	\end{aligned}
\end{equation}
We wish to apply the maximal regularity estimate result of Lemma~\ref{lemma: L2L2 estimate} to this problem. Toward estimating the right-hand side, we can use Lemma~\ref{lemma: est defect u} to bound $\wtdeltap$. The next result will allow us to estimate the difference of $q$ and $\tbeta$ terms in \eqref{def calFhu}.

\begin{lemma} \label{lemma: est tbeta q terms} Under the assumptions of Theorem~\ref{thm main}, the following bounds hold on $[0, \finalth]$:
	\begin{equation} \label{est ut Rh}
		\begin{aligned}
			&\sup_{\norm{\phih}_{\Ltwo} = 1} |	(\Delta \ut, \tbeta(\theta) \phih)_{L^2}
			-
			(\Delta_h \pht, \projRitz [\tbeta (\thetah) \phih])_{L^2} |
			\\
			&\lesssim
			h^\poldeg
			+
			\norm{ \Delta_h \ehut }_{\Ltwo}
			\bigl(	\norm{ \tbeta(\thetah)}_{\Linf} + o(1) \bigr)
			+
			\norm{ \tbeta(\theta) -  \tbeta(\thetah)}_{\Wonedelta}  \,,
		\end{aligned}
	\end{equation}
	and
	\begin{equation} \label{est u Rh}
		\begin{aligned}
			&\sup_{\norm{\phih}_{\Ltwo} = 1} |	(\Delta u, q(\theta) \phih)_{L^2}
			-
			(\Delta_h \pht, \projRitz [q (\thetah) \phih])_{L^2} |
			\\
			&\lesssim
			h^\poldeg
			+
			\norm{ \Delta_h \ehu }_{\Ltwo}
			\bigl(	\norm{ q(\thetah)}_{\Linf} + o(1) \bigr)
			+
			\norm{ q(\theta) -  q(\thetah)}_{\Wonedelta} ,
		\end{aligned}
	\end{equation}
	where the hidden constants are independent of $h$ and $\finalth$.
\end{lemma}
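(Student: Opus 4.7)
The plan is to prove~\eqref{est ut Rh}; the bound~\eqref{est u Rh} will follow by an analogous argument. I would first rewrite both terms in the difference using the $L^2$-self-adjointness of $\pi_h$ together with the identity $\Delta_h \Rhut = \pi_h \Delta \ut$, which follows from the defining property of the discrete Laplacian combined with the Ritz projection property. Substituting $\pht = \Rhut - \ehut$ and exploiting that $\projRitz[\tbeta(\thetah)\phih] \in \Vh$ produces the clean decomposition
\[
(\Delta \ut, \tbeta(\theta)\phih)_{L^2} - (\Delta_h \pht, \projRitz[\tbeta(\thetah)\phih])_{L^2} = (\Delta \ut, \tbeta(\theta)\phih - \projRitz[\tbeta(\thetah)\phih])_{L^2} + (\Delta_h \ehut, \projRitz[\tbeta(\thetah)\phih])_{L^2}.
\]
The second summand is directly controlled by Cauchy--Schwarz together with Lemma~\ref{Lemma: est Ritz product_v2} (applied with $\mu = \tbeta$ and $\psi_h = \thetah$), with the uniform bound on $\|\thetah\|_{\Wonedelta}$ from Lemma~\ref{lemma: uniform bounds} ensuring that the factor $h^{\delta/(\dimension+\delta)}$ produced by the lemma is $o(1)$. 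This already accounts for the $\|\Delta_h \ehut\|_{\Ltwo}$-term on the right-hand side of~\eqref{est ut Rh}.

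For the first summand, I would further decompose $\tbeta(\theta)\phih - \projRitz[\tbeta(\thetah)\phih] = (\tbeta(\theta)-\tbeta(\thetah))\phih + (\Id-\projRitz)[\tbeta(\thetah)\phih]$. The coefficient-error piece gives $|(\Delta \ut,(\tbeta(\theta)-\tbeta(\thetah))\phih)_{L^2}| \leq \|\Delta \ut\|_{\Ltwo}\|\tbeta(\theta)-\tbeta(\thetah)\|_{\Linf}\|\phih\|_{\Ltwo}$, which is bounded by $\|\tbeta(\theta)-\tbeta(\thetah)\|_{\Wonedelta}\|\phih\|_{\Ltwo}$ using the embedding~\eqref{embedding WNdelta} together with the uniform bound on $\|\Delta \ut\|_{\Ltwo}$ inherited from $\ut \in \Xu$. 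The Ritz-error piece is treated by integrating by parts (which is justified since $\ut \in \Htwo \cap \Honezero$ and $(\Id-\projRitz)[\tbeta(\thetah)\phih]\in \Honezero$) and then invoking Ritz orthogonality of both $\ut - \Rhut$ and of $(\Id-\projRitz)[\tbeta(\thetah)\phih]$ against~$\Vh$, which collapses the expression to the compact identity
\[
(\Delta \ut,(\Id-\projRitz)[\tbeta(\thetah)\phih])_{L^2} = -a(\ut - \Rhut,\tbeta(\thetah)\phih).
\]

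To extract the advertised $h^\poldeg\|\phih\|_{\Ltwo}$ bound for $a(\ut - \Rhut,\tbeta(\thetah)\phih)$, I would expand the gradient via Leibniz and treat the two resulting pieces separately. The piece $(\nabla(\ut-\Rhut),\phih\,\nabla\tbeta(\thetah))_{L^2}$ is estimated using the Ritz approximation bound $\|\nabla(\ut-\Rhut)\|_{\Ltwo}\lesssim h^\poldeg$ together with the uniform control of $\nabla\tbeta(\thetah)$ inherited from the regularity of $\theta$ (via $\theta \in \Xtheta$) and of $\thetah$ (via Lemma~\ref{lemma: uniform bounds}). For the remaining piece $(\nabla(\ut-\Rhut),\tbeta(\thetah)\nabla\phih)_{L^2}$, I would exploit the Ritz orthogonality $(\nabla(\ut-\Rhut),\nabla\phih)_{L^2}=0$ by subtracting an elementwise approximation $\bar\beta_h$ of $\tbeta(\thetah)$, so as to gain the extra factor of $h$ that compensates the $h^{-1}$ coming from the inverse estimate $\|\nabla\phih\|_{\Ltwo}\lesssim h^{-1}\|\phih\|_{\Ltwo}$; any residual coefficient fluctuation is then routed into the $\|\tbeta(\theta)-\tbeta(\thetah)\|_{\Wonedelta}$-term already present on the right-hand side.

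The hard part will be precisely this last step: extracting the sharp order $h^\poldeg$ for $a(\ut - \Rhut,\tbeta(\thetah)\phih)$ despite the apparent loss of one power of $h$ through the inverse estimate on $\nabla\phih$. Overcoming it requires the careful interplay between Ritz orthogonality, elementwise approximation of the variable coefficient $\tbeta(\thetah)$ (using the regularity of $\theta$ assumed in $\Xtheta$ and the uniform $\Wonedelta$-bound on $\thetah$), and the $\Linf$-boundedness supplied by the embedding~\eqref{embedding WNdelta}. All other bookkeeping is standard: the hidden constants depend only on the $\Xu$- and $\Xtheta$-norms of the exact solution and on the constants appearing in Lemmas~\ref{Lemma: est Ritz product_v2} and~\ref{lemma: uniform bounds}, which are independent of $h$ and $\finalth$ as required.
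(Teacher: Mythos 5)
Your setup is sound and, up to rearrangement, coincides with the paper's: the term $(\Delta_h \ehut, \projRitz[\tbeta(\thetah)\phih])_{L^2}$ handled by Lemma~\ref{Lemma: est Ritz product_v2} and the coefficient-error term paired with $\Delta \ut$ are essentially the first two contributions in the paper's proof. The difference --- and the gap --- lies in the remaining term. Your decomposition leaves you with $a(\ut-\Rhut, (\Id-\projRitz)[\tbeta(\thetah)\phih])$, i.e.\ the \emph{discrete} temperature sits inside the interpolation error, whereas the paper arranges the splitting (adding and subtracting $a(\projRitz\ut, \projRitz[\tbeta(\thetah)\phih])$ and $a(\projRitz\ut, \tbeta(\theta)\phih)$) so that the analogous term is $a(\projRitz\ut-\ut, (\Id-\projRitz)[\tbeta(\theta)\phih])$ with the \emph{exact} $\theta$. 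This is decisive: since $\theta\in L^2(W^{\poldeg+1,\infty}(\Omega))$ by assumption, an elementwise Leibniz expansion (in which the top-order term vanishes because $\phih$ is a degree-$\poldeg$ polynomial on each cell) gives $\norm{(\Id-\Ih)[\tbeta(\theta)\phih]}_{\Hone}\lesssim\norm{\tbeta(\theta)}_{\Wpolponeinf}\norm{\phih}_{\Ltwo}$ with no negative power of $h$, and the factor $h^\poldeg$ then comes entirely from $\norm{\nabla(\ut-\Rhut)}_{\Ltwo}$. For $\tbeta(\thetah)$ no such bound is available: Lemma~\ref{lemma: uniform bounds} only controls $\norm{\thetah}_{\Wonedelta}$, so $\norm{\nabla\thetah}_{\Linf}$ costs $h^{-\dimension/(\dimension+\delta)}$ by an inverse estimate, and $\norm{(\Id-\projRitz)[\tbeta(\thetah)\phih]}_{\Hone}$ inherits that loss.

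Your proposed repair of this term does not close. First, for the piece $(\nabla(\ut-\Rhut), \phih\,\nabla\tbeta(\thetah))_{L^2}$ you invoke ``uniform control of $\nabla\tbeta(\thetah)$'', but no $\Linf$ bound on $\nabla\thetah$ is available (only $\Wonedelta$); one would instead need a generalized H\"older argument pairing $\norm{\nabla\tbeta(\thetah)}_{L^{\dimension+\delta}}$ with a $W^{1,p}$ Ritz estimate for $p>2$, which you do not carry out. Second, and more seriously, the orthogonality trick for $(\nabla(\ut-\Rhut), \tbeta(\thetah)\nabla\phih)_{L^2}$ fails: for an elementwise (non-globally) constant $\bar\beta_h$, the field $\bar\beta_h\nabla\phih$ is not the gradient of a function in $\Vh$, so $(\nabla(\ut-\Rhut), \bar\beta_h\nabla\phih)_{L^2}$ does not vanish by Ritz orthogonality; moreover the oscillation bound $\norm{\tbeta(\thetah)-\bar\beta_h}_{\Linf}\lesssim h\norm{\nabla\tbeta(\thetah)}_{\Linf}$ again requires the unavailable gradient bound, and with only $\Wonedelta$ control you lose a factor $h^{\dimension/(\dimension+\delta)}$ against the target rate $h^\poldeg$, which would degrade the main theorem. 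The fix is to reorganize the splitting as the paper does, so that the interpolation-error term carries $\tbeta(\theta)$ and the coefficient difference $\tbeta(\theta)-\tbeta(\thetah)$ is instead paired with $\Delta_h\projRitz\ut$ and controlled through $\norm{\tbeta(\theta)-\tbeta(\thetah)}_{\Wonedelta}$.
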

\begin{proof}
	We only prove \eqref{est ut Rh} as estimate \eqref{est u Rh} follows analogously.	We first have the following rewriting:
	\begin{align}
		&
		(\Delta \ut, \tbeta(\theta) \phih)_{L^2}
		-
		(\Delta_h \pht, \projRitz [\tbeta (\thetah) \phih])_{L^2} \\
		=&\,
		a(\pht, \projRitz [\tbeta (\thetah) \phih])
		-
		a( \ut, \tbeta(\theta) \phih)
		\\
		=&\,\begin{multlined}[t]
			a(\pht - \projRitz \ut , \projRitz [\tbeta (\thetah) \phih])
			+
			a(\projRitz \ut , \projRitz [\tbeta (\thetah) \phih])
			-
			a( \projRitz \ut, \tbeta(\theta) \phih)
				+
			a( \projRitz \ut -  \ut  , \tbeta(\theta) \phih)
		\end{multlined}
		\\
		=&\,\begin{multlined}[t]
			a(\ehut , \projRitz [\tbeta (\thetah) \phih])
			+
			a(\projRitz \ut , \projRitz [\tbeta (\thetah) \phih - \tbeta(\theta) \phih])
			+
			a( \projRitz \ut -  \ut  , (\Id - \projRitz) \tbeta(\theta) \phih)
		\end{multlined}
		\\
		=&\,\begin{multlined}[t]
			- (\Delta_h \ehut , \projRitz [\tbeta (\thetah) \phih])_{L^2}
			-
			(\Delta_h\projRitz \ut , \projRitz [\tbeta (\thetah) \phih - \tbeta(\theta) \phih])_{L^2}
				+
			a( \projRitz \ut -  \ut  , (\Id - \projRitz) [\tbeta(\theta) \phih]),
		\end{multlined}
	\end{align}
	where we used the orthogonality of $\projRitz$ in the $a$-inner product.
	Thus, employing Lemma~\ref{Lemma: est Ritz product_v2} yields
	\begin{align}
		&\sup_{\norm{\phih}_{\Ltwo} = 1} |	(\Delta \ut, \tbeta(\theta) \phih)_{L^2}
		-
		(\Delta_h \pht, \projRitz [\tbeta (\thetah) \phih])_{L^2} |
		\\
		&\leq
		\,\begin{multlined}[t]
			\norm{ \Delta_h \ehut }_{\Ltwo}
			\bigl(	\norm{ \tbeta(\thetah)}_{\Linf} + o(1) \bigr)
			+
			\norm{\Delta \pt u}_{\Ltwo}\norm{ \tbeta(\theta) -  \tbeta(\thetah)}_{\Wonedelta}
			\\
			+
			h^\poldeg 		\norm{\ut }_{H^{\poldeg+1}(\Omega)}
			\sup_{\norm{\phih}_{\Ltwo} = 1} \norm{(\Id - \projRitz) [\tbeta(\theta) \phih] }_{\Hone}.
		\end{multlined}
	\end{align}
	By the best approximation property,
	\begin{align}
		\norm{(\Id - \projRitz) [\tbeta(\theta) \phih] }_{\Hone}
		\leq
		\norm{(\Id - \Ih) [\tbeta(\theta) \phih] }_{\Hone}
		\lesssim
		\norm{\tbeta(\theta)}_{\Wpolponeinf} \norm{\phih}_{\Ltwo},
	\end{align}
	where the last estimate can be obtained analogously to the proof of \cite[Lemma~5.2]{Doe24}. Thus, we have \eqref{est ut Rh}.
\end{proof}
\noindent Thanks to Lemma~\ref{lemma: est tbeta q terms}, we have
\begin{equation}
	\begin{aligned}
			\|\calFhu\|_{\LtwotLtwo}
		\lesssim&\,
		C(\|u\|_{\Xu})h^\poldeg
		+	\norm{ \Delta_h \ehut }_{\LtwotLtwo}
		\bigl(	\norm{ \tbeta(\thetah)}_{\LinftLinf} + o(1) \bigr)\\ &
		+
		\norm{ \tbeta(\theta) - \tbeta(\thetah)}_{\LtwotWonedelta} \\ &
		+ 	\norm{ \Delta_h \ehu }_{\LtwotLtwo}
		\bigl(	\norm{ q(\thetah)}_{\LinftLinf} + o(1) \bigr)
		+
		\norm{ q(\theta) -  q(\thetah)}_{\LtwotWonedelta} \\ &
		+	\|\calN(\uh, \uht, \uhtt, \nabla \uh, \nabla \uht, \thetah)\\ &\qquad\qquad
		- \calN(\Rhu, \Rhut, \Rhutt, \nabla \Rhu, \nabla \Rhut, \theta)\|_{\LtwotLtwo}.
	\end{aligned}
\end{equation}

\begin{remark} \label{rem:L2_defect_u}
	One would expect that when measured in the $L^2(0,T; \Ltwo)$ norm, the defect scales as $h^{\eta + 1}$ as opposed to $h^\eta$ obtained above. 
	This reduction in the order of convergence has two theoretical sources.
	Firstly, due to the quadratic gradient nonlinearity in the Kuznetsov equations, in 
	Lemma~\ref{lemma: est defect u}, we need to estimate terms of the type $\nabla (u-\Rhu) \cdot \nabla \ut$,
	which are of order $\mathcal{O}(h^\eta)$.
	Secondly, the non-variational structure of the terms $q(\theta) \Delta u$
	and $\beta (\theta) \Delta \partial_t u$ do not allow us to
	choose the Ritz projection optimally, but only using
	to the zeroth-order terms of $q$ and $\beta$,
	which yields the defects that have to be handled in 
	Lemma~\ref{lemma: est tbeta q terms} with order $\mathcal{O}(h^\eta)$.
\end{remark}

In the next step, we bound the difference of the $\calN$ terms.


\begin{lemma}\label{lemma: est calN}
	Under the assumptions of Theorem~\ref{thm main}, the following estimate holds on $[0, \finalth]$:
	\begin{equation} \label{est diff calN}
		\begin{aligned}
			&\|	\calN(\uh, \uht, \uhtt, \nabla \uh, \nabla \uht, \thetah)
			 -\calN(\Rhu, \Rhut, \Rhutt, \nabla \Rhu, \nabla \Rhut, \theta)\|_{\LtwotLtwo} \\
			\leq&\,\begin{multlined}[t] C(\norm{u}_{\Xu}, \norm{\theta}_{\Xtheta}) \Bigl\{ \|\kW(\theta)-\kW(\thetah))\|_{\LtwotLinf} + \|\kK(\theta)-\kK(\thetah))\|_{{\LtwotLinf}} \\
				+\|\kW(\thetah)\|_{\LinftLinf} \|\errhu\|_{\LtwotLinf}
				+ \|\errhut\|_{\LtwotLtwo}\\
				+\|\kK(\thetah)\|_{\LinftLinf} \|\errhut\|_{\LtwotLtwo} \\+	\norm{ \nabla \ehu}_{\LtwotLtwo}
				+
				\norm{ \nabla \ehtu}_{\LtwotLtwo}\Bigr\}\\
				+
				\Bigl\{
				\|\kW(\thetah) \, \uh\|_{\LinftLinf}+ \|\kK(\thetah) \, \uht\|_{\LinftLinf}
				\Bigr\}
				\|\errhutt\|_{\LtwotLtwo},
			\end{multlined}
		\end{aligned}
	\end{equation}
	where the constant is independent of $h$ and $\finalth$.
\end{lemma}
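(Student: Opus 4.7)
The plan is to expand $\calN$ via \eqref{def calN} into its three nonlinear blocks,
\begin{align*}
\calN(v, v_t, v_{tt}, \nabla v, \nabla v_t, \sigma) = 2\kW(\sigma)(v v_{tt} + v_t^2) + 2\kK(\sigma) v_t v_{tt} + 2\ell \nabla v \cdot \nabla v_t,
\end{align*}
and to bound the $\LtwotLtwo$ norm of each resulting difference by an add-and-subtract decomposition followed by H\"older's inequality in time and space. The guiding principle is that every occurrence of $\ehutt$ must appear multiplied \emph{only} by the prefactors $\kW(\thetah)\uh$ or $\kK(\thetah)\uht$, which are small by \eqref{smallness condition u theta r}; only under this condition can the $\ehutt$ contributions be absorbed on the left-hand side of the wave-subproblem energy estimate in Section~\ref{Sec: estimate wave}.

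The three central splittings that implement this principle are
\begin{align*}
\kW(\thetah)\uh\uhtt - \kW(\theta)\Rhu\Rhutt &= -\kW(\thetah)\uh\, \ehutt + (\kW(\thetah) - \kW(\theta))\,\uh\, \Rhutt - \kW(\theta)\,\ehu\, \Rhutt,\\
\kK(\thetah)\uht\uhtt - \kK(\theta)\Rhut\Rhutt &= -\kK(\thetah)\uht\, \ehutt + (\kK(\thetah) - \kK(\theta))\,\uht\, \Rhutt - \kK(\theta)\,\ehut\, \Rhutt,\\
\nabla\uh \cdot \nabla \uht - \nabla\Rhu \cdot \nabla \Rhut &= -\nabla \ehu \cdot \nabla \uht - \nabla \Rhu \cdot \nabla \ehut.
\end{align*}
The remaining quadratic term $\kW(\thetah)\uht^2 - \kW(\theta)\ut^2$ is treated symmetrically, producing a coefficient-difference piece $(\kW(\thetah)-\kW(\theta))\ut^2$ together with $\kW(\thetah)(\uht - \ut)(\uht + \ut)$, and expanding $\uht - \ut = -\ehut - (\Rhut - \ut)$.

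After these decompositions, the estimate is completed by distributing the $\LtwotLtwo$ norm via H\"older in time and space. The ``pressure-type'' factors $\uh$, $\nabla\uht$, $\Rhu$, $\Rhut$, $\Rhutt$, and $\nabla \Rhu$ are controlled through $C(\norm{u}_{\Xu})$ using the stability and approximation bounds \eqref{approx properties projRitz} of the Ritz projection together with the uniform estimates in Lemma~\ref{lemma: uniform bounds} and Corollary~\ref{corollary: boundedness}. The residual pieces $(u - \Rhu)$, $(\ut - \Rhut)$ and $(\utt - \Rhutt)$ are bounded by \eqref{approx properties projRitz}, producing the $h^\poldeg$ factor absorbed into $C(\norm{u}_{\Xu}, \norm{\theta}_{\Xtheta})$; the coefficient-difference pieces yield the $\|\kW(\theta)-\kW(\thetah)\|_{\LtwotLinf}$ and $\|\kK(\theta)-\kK(\thetah)\|_{\LtwotLinf}$ terms; and the discrete error pieces reassemble into the stated norms $\|\ehu\|_{\LtwotLinf}$, $\|\ehut\|_{\LtwotLtwo}$, $\|\nabla \ehu\|_{\LtwotLtwo}$, $\|\nabla \ehut\|_{\LtwotLtwo}$, and, finally, $\|\ehutt\|_{\LtwotLtwo}$ with the desired small prefactor.

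The main obstacle is organizational rather than technical: the three add-and-subtract patterns above must be selected \emph{consistently} so that $\ehutt$ is paired exclusively with $\kW(\thetah)\uh$ or $\kK(\thetah)\uht$. A naive choice (for instance, adding and subtracting $\kW(\theta)\Rhu\,\uhtt$) would yield an $\ehutt$ prefactor of the form $\kW(\theta)\Rhu$ or $\kW(\theta)u$, for which no smallness is available, and would break the absorption step in the subsequent energy argument. Once the correct splittings are fixed, the remaining work is routine application of H\"older, the triangle inequality, and the approximation and stability properties of $\projRitz$.
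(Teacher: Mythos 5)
Your proposal is correct and follows essentially the same route as the paper: the paper likewise splits off the coefficient differences $\kW(\theta)-\kW(\thetah)$, $\kK(\theta)-\kK(\thetah)$ and factors the quadratic differences so that $\partial_t^2 e_h^u$ appears only with the prefactors $\kW(\thetah)u_h$ and $\kK(\thetah)\partial_t u_h$ (via $((\projRitz u)^2-u_h^2)_{tt}=2(\Rhu-u_h)\Rhutt+2u_h(\Rhutt-\partial_t^2 u_h)+2(\Rhut-\partial_t u_h)(\Rhut+\partial_t u_h)$ and its analogue), which is exactly the absorption-enabling structure you identify. One small slip: the comparison function in the lemma is $\calN(\Rhu,\Rhut,\Rhutt,\ldots,\theta)$, so the remaining quadratic block is $\kW(\thetah)(\partial_t u_h)^2-\kW(\theta)(\Rhut)^2$ rather than $\kW(\thetah)(\partial_t u_h)^2-\kW(\theta)u_t^2$; using $\partial_t u_h-\Rhut=-\partial_t e_h^u$ directly avoids the extra $(\Rhut-u_t)$ defect, which would otherwise introduce an $h^{\poldeg}$ term not present on the stated right-hand side.
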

\begin{proof}
	We first rewrite the difference of $\calN$ terms as follows:
	\begin{equation} \label{diff calN}
		\begin{aligned}
			&\calN(\Rhu, \Rhut, \Rhutt, \nabla \Rhu, \nabla \Rhut, \theta)-\calN(\uh, \uht, \uhtt, \nabla \uh, \nabla \uht, \thetah)\\
			=&\, \begin{multlined}[t]
				( \kW(\theta)-\kW(\thetah))((\projRitz u)^2)_{tt} + \kW(\thetah)((\projRitz u)^2-\uh^2)_{tt}
				\\+ ( \kK(\theta)-\kW(\thetah))((\projRitz \ut)^2)_{t} + \kK(\thetah)((\projRitz \ut)^2-\uht^2)_{t}
				+2 \ell  \nabla \ehu \cdot \nabla \projRitz \ut
					+
				2 \ell  \nabla \ph \cdot \nabla \ehtu .
			\end{multlined}
		\end{aligned}
	\end{equation}
	Using the fact that
	\[
	\begin{aligned}
		&((\projRitz u)^2-\uh^2)_{tt}
		=
		\,  2(\Rhu -\uh) \Rhutt+ 2\uh (\Rhutt - \uhtt)+2(\Rhut-\uht)(\Rhut+\uht),
	\end{aligned}
	\]
	we have by H\"older's inequality
	\begin{equation}
		\begin{aligned}
			&\norm{	( \kW(\theta)-\kW(\thetah))((\projRitz u)^2)_{tt} + \kW(\thetah)((\projRitz u)^2-\uh^2)_{tt} }_{\LtwotLtwo} \\
			\lesssim&\,\begin{multlined}[t]  	  \|\kW(\theta)-\kW(\thetah))\|_{\LtwotLinf}\|((\projRitz u)^2)_{tt}\|_{\LinftLtwo}
				\\
				+\|\kW(\thetah)\|_{\LinftLinf} \|\errhu\|_{\LtwotLinf}\|\Rhutt\|_{\LinfLtwo}
				\\ +
				\|\kW(\thetah) \, \uh\|_{\LinftLinf}  \|\errhutt\|_{\LtwotLtwo}
				\\
				+\|\errhut\|_{\LtwotLtwo}(\|\Rhut\|_{\LinfLinf}+\|\uht\|_{\LinftLinf})),
			\end{multlined}
		\end{aligned}
	\end{equation}
	we note that $\|\kW(\thetah)\|_{\LinftLinf} \lesssim 1$ thanks to Corollary~\ref{corollary: boundedness}. Next, since
	\[
	\begin{aligned}
		((\projRitz \ut)^2-\uht^2)_{t} = 2(\projRitz \ut-\uht) \Rhutt+\uht(\Rhutt - \utt)
	\end{aligned}
	\]
	we have
	\begin{equation}
		\begin{aligned}
			&\|( \kK(\theta)-\kK(\thetah))((\projRitz \ut)^2)_{t} + \kK(\thetah)((\projRitz \ut)^2-\uht^2)_{t} \|_{\LtwotLtwo} \\
			\lesssim&\,\begin{multlined}[t] 	\| \kK(\theta)-\kK(\thetah)\|_{\LtwotLinf}\|((\projRitz \ut)^2)_{t}\|_{\LinfLtwo}\\
				+\|\kK(\thetah)\|_{\LinftLinf} \|\nabla \errhut\|_{\LinftLtwo} \|\Rhutt\|_{\LtwoLthree}\\
				+	\|\kK(\thetah) \, \uht\|_{\LinftLinf}  \|\errhutt\|_{\LtwotLtwo}
			\end{multlined}
		\end{aligned}
	\end{equation}
	where we have relied on the embedding $\Hone \hookrightarrow \Lsix$. We can further employ the fact that $\|\kK(\thetah)\|_{\LinftLinf} \lesssim 1$. Lastly, we can estimate the gradient terms on the right-hand side of \eqref{diff calN} as follows:
	\begin{equation}
		\begin{aligned}
			& \|\ell  \nabla \ehu \cdot \nabla \projRitz \ut
			+
			\ell  \nabla \uh \cdot \nabla \ehtu \|_{\LtwotLtwo}\\
			\lesssim&\,
			\norm{ \nabla \ehu}_{\LtwotLtwo} \norm{\nabla \projRitz \ut}_{\LinfLinf}
			+
			\norm{\nabla \uh}_{\LinftLinf}
			\norm{ \nabla \ehtu}_{\LtwotLtwo},
		\end{aligned}
	\end{equation}
	where we recall that $	\norm{\nabla \uh}_{\LinftLinf} \lesssim 1$ on $[0, \finalth]$ thanks to Lemma~\ref{lemma: uniform bounds}. Combining the derived bounds leads to \eqref{est diff calN}.
\end{proof}
We have now all the ingredients to estimate $\calFhu$.
\begin{lemma} \label{lemma: est calFhu}
	Under the assumptions of Theorem~\ref{thm main}, we have
	\begin{equation}
		\begin{aligned}
				\|\calFhu\|_{\LtwotLtwo}
			\leq&\,\begin{multlined}[t] C(\norm{u}_{\Xu}, \norm{\theta}_{\Xtheta}) \Bigl\{ h^{\poldeg}+\|\kW(\theta)-\kW(\thetah))\|_{\LtwotLinf}\\ + \|\kK(\theta)-\kK(\thetah))\|_{\LtwotLinf}
				\\
				+\|\kW(\thetah)\|_{\LinftLinf} \|\errhu\|_{\LtwotLinf}
				+\|\errhut\|_{\LtwotLtwo}\\ +\|\kK(\thetah)\|_{\LinftLinf} \|\errhut\|_{\LtwotLtwo} \\+	\norm{ \nabla \ehu}_{\LtwotLtwo}
				+	\norm{ \nabla \ehtu}_{\LtwotLtwo}+\|\Dh \errhu\|_{\LtwotLtwo} \Bigr\}\\
				+	\norm{ q(\theta) -  q(\thetah)}_{L^2_t(\Wonedelta)}+\norm{ \tbeta(\theta) -  \tbeta(\thetah)}_{L^2_t(\Wonedelta)}\\
				+	\norm{ \Delta_h \ehut }_{\LtwotLtwo}
				\bigl(	\norm{ \tbeta(\thetah)}_{\LinftLinf} + o(1) \bigr)\\
				+ \bigl(	\|\kW(\thetah)\, \uh\|_{\LinftLinf}
				+ \|\kK(\thetah) \, \uht\|_{\LinftLinf}
				\bigr) \|\errhutt\|_{\LtwotLtwo}.
			\end{multlined}
		\end{aligned}
	\end{equation}
\end{lemma}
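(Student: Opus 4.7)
The plan is to estimate $\calFhu$ term-by-term from its definition in \eqref{def calFhu} and then assemble the bounds via the triangle inequality. Since $\phih \in \Vh$, I would first note the dual characterization
\begin{equation}
\|\calFhu(t)\|_{\Ltwo} = \sup_{\|\phih\|_{\Ltwo}=1} (\calFhu(t), \phih)_{\Ltwo},
\end{equation}
so that a pointwise-in-time control of the right-hand side of \eqref{def calFhu} translates into the desired $\LtwotLtwo$-bound after squaring and integrating over $(0,t)$.

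First I would dispatch the two easy contributions: the defect term $(\wtdeltap, \phih)_{\Ltwo}$ is handled by Lemma~\ref{lemma: est defect u}, which gives $\|\wtdeltap\|_{\LtwotLtwo} \leq C(\|u\|_{\Xu}) h^{\poldeg}$, and the data discrepancy $(\fu - \fuh, \phih)_{\Ltwo}$ is bounded using assumption \eqref{assumption_accuracy_fh}. Both contribute $\mathcal{O}(h^\poldeg)$ to the final estimate.

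Next, I would apply Lemma~\ref{lemma: est tbeta q terms} pointwise in time to handle the two non-variational contributions coming from $q(\theta)\Delta u$ and $\tbeta(\theta)\Delta \ut$. Taking suprema over unit-norm $\phih$ and integrating over $(0,t)$ produces precisely the terms $\norm{ \Delta_h \ehut }_{\LtwotLtwo}(\norm{\tbeta(\thetah)}_{\LinftLinf} + o(1))$, the analogous term with $\ehu$ and $q(\thetah)$ (where the latter is absorbed into the constant $\|\Dh \ehu\|_{\LtwotLtwo}$ via Corollary~\ref{corollary: boundedness}), and the two $\LtwotWonedelta$-differences $\|q(\theta)-q(\thetah)\|_{\LtwotWonedelta}$ and $\|\tbeta(\theta)-\tbeta(\thetah)\|_{\LtwotWonedelta}$, together with a further $\mathcal{O}(h^\poldeg)$ contribution. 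The difference of the $\calN$-terms in the last line of \eqref{def calFhu} is then controlled directly by Lemma~\ref{lemma: est calN}, which contributes the remaining terms involving $\kW(\theta)-\kW(\thetah)$, $\kK(\theta)-\kK(\thetah)$, and the $\ehu$-, $\ehut$-, $\ehutt$- and $\nabla \ehu$-, $\nabla \ehut$-norms, with the factors $\|\kW(\thetah)\uh\|_{\LinftLinf}$ and $\|\kK(\thetah)\uht\|_{\LinftLinf}$ multiplying $\|\ehutt\|_{\LtwotLtwo}$.

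The step is essentially bookkeeping rather than a conceptual obstacle; the only subtlety I want to be careful about is keeping the $o(1)$-factors attached to $\|\Dh \ehut\|_{\LtwotLtwo}$ visible (and not merging them into a generic constant), since these factors are precisely what will allow the corresponding terms to be absorbed on the left-hand side when Lemma~\ref{lemma: L2L2 estimate} is subsequently applied to the parabolic error equation \eqref{eq ehu}. Likewise, the smallness of $\|\tbeta(\thetah)\|_{\LinftLinf}$, guaranteed by the smallness assumption \eqref{smallness condition u theta r} on $\beta(\theta)-\beta_0$ together with the bootstrap inequalities of Section~\ref{Sec: Existence}, must be tracked separately from generic constants. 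Collecting all contributions gives the claimed estimate.
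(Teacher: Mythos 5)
Your proposal is correct and follows essentially the same route as the paper, which likewise obtains the estimate by combining Lemma~\ref{lemma: est defect u}, assumption \eqref{assumption_accuracy_fh}, Lemma~\ref{lemma: est tbeta q terms}, and Lemma~\ref{lemma: est calN} term by term from the definition \eqref{def calFhu}. Your additional remarks on keeping the $o(1)$ and smallness factors visible for the later absorption step are consistent with how the paper uses the bound in Section~\ref{Sec: proof of the main result}.
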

\begin{proof}
	The estimate follows by combining the results of Lemmas~\ref{lemma: est defect u},~\ref{lemma: est tbeta q terms}, and~\ref{lemma: est calN}.
\end{proof}

\noindent By employing the maximal regularity estimate of Lemma~\ref{lemma: L2L2 estimate} with the choice $\wh=\delt \ehu$, we obtain
\begin{equation}
	\begin{aligned}
		&\|\ehutt\|_{\LtwotLtwo} + \|\Delta_h \ehut\|_{\LtwotLtwo}
		+
		\|\nabla \errhut\|_{\LinftLtwo}
		\lesssim
		  \,  \|\calFhu\|_{\LtwotLtwo}+\|\nabla \errhut(0)\|_{\Ltwo}.
	\end{aligned}
\end{equation}
Note that we can also bound $\norm{\Delta_h \errhu }_{\LinftLtwo} $ by using
\begin{equation}
	\norm{\Delta_h \errhu }_{\LinftLtwo}
	\lesssim
	\norm{\Delta_h \errhu(0)}_{\Ltwo}
	+
	\norm{  \partial_t \Delta_h \errhu }_{\LtwotLtwo}.
\end{equation}
Since by our choice of discrete initial data the terms at zero vanish, we have
\begin{equation} \label{max norm est wave}
	\begin{aligned}
		\begin{multlined}[t] \|\ehutt\|_{\LtwotLtwo} + \|\Delta_h \ehut\|_{\LtwotLtwo} +	\norm{\Delta_h \errhu }_{\LinftLtwo}
 +
			\|\nabla \errhut\|_{\LinftLtwo}
			\lesssim\,  \|\calFhu\|_{\LtwotLtwo}. \end{multlined} 
	\end{aligned}
\end{equation}
This bound will be combined with an analogous one for the semi-discrete temperature equation, where we will look to either absorb the right-hand side terms by the left-hand side or handle them via \Gronwall's inequality.

\section{Estimates for the semi-discrete heat subproblem} \label{Sec: estimates semidiscrete heat}
In this section, we derive an estimate of $\errhtheta$ on $[0, \finalth]$ by suitably testing the semi-discrete heat subproblem. Recall that the heat equation in weak form is given by
\begin{align}
	(\thetat, \phi)_{L^2}
	+
	\kappa a(\theta, \phi) +\nu(\theta, \phi)_{L^2}
	=&\, (\alpha(\theta) (\zeta_1 u^2+\zeta_2 \ut^2), \phi)_{L^2}
\end{align}
for all $\phi \in \Honezero$. The semi-discrete version is then given by
\begin{align}
	(\thetaht, \phih)_{L^2}
	+
	\kappa a(\thetah, \phih) +\nu(\thetah, \phih)_{L^2}
	=\, (\alpha(\thetah) (\zeta_1 \uh^2+\zeta_2 (\uht)^2), \phih)_{L^2}
\end{align}
for all $\phih \in \Vh$. The Ritz projection of $\theta$ satisfies
\begin{align}
	&(\Rhthetat, \phih)_{L^2}+\kappa a(\Rhtheta, \phih) + \nu(\Rhtheta, \phih)_{L^2}
	\, (\alpha(\Rhtheta) (\zeta_1 (\Rhu)^2+\zeta_2 (\Rhut)^2), \phih)_{L^2}+	(\wtdeltatheta, \phih)_{L^2}
\end{align}
with the defect given by
\begin{equation} \label{eq:def_delta_theta}
	\begin{aligned}
		(\wtdeltatheta, \phih)_{L^2}  \coloneqq &\,\begin{multlined}[t] (\Rhthetat-\thetat, \phih)_{L^2}+ \nu(\Rhtheta-\theta, \phih)_{L^2}\\
			+ (\alpha(\theta) (\zeta_1 u^2+\zeta_2 \ut^2)-\alpha(\Rhtheta) (\zeta_1 (\Rhu)^2+\zeta_2 (\Rhut)^2), \phih)_{L^2}.
		\end{multlined}
	\end{aligned}
\end{equation}
Thus, the error $\ehtheta = \Rhtheta-\thetah$ solves the parabolic problem
\begin{equation} \label{eq ehtheta timediff}
	\begin{aligned}
		(\ehttheta, \phih)_{L^2}
		+
		\kappa  a(\ehtheta, \phih)+\nu(\ehtheta, \phih)_{L^2}
		=\, (\calFhtheta, \phih)_{L^2}
	\end{aligned}
\end{equation}
with the right-hand side
\begin{equation} \label{calFhTheta}
	\begin{aligned}
		(\calFhtheta, \phih)_{L^2}
		\coloneqq \begin{multlined}[t]
			(	\wtdeltatheta, \phih)_{L^2}
			 + (\alpha(\Rhtheta) (\zeta_1 (\Rhu)^2+\zeta_2 (\Rhut)^2)-\alpha(\thetah) (\zeta_1 \uh^2+\zeta_2  (\uht)^2), \phih)_{L^2}.
		\end{multlined}
	\end{aligned}
\end{equation}
\indent Looking at the estimate of the acoustic right-hand side $\calFhu$ in Lemma~\ref{lemma: est calFhu}, we see that we have
to further bound $\norm{ q(\theta) -  q(\thetah)}_{L^2_t(\Wonedelta)}$ and $\norm{ \tbeta(\theta) -  \tbeta(\thetah)}_{L^2_t(\Wonedelta)}$ in the course of the analysis of the semi-discrete heat equation. We intend to rely on the properties of the temperature-dependent speed of sound and sound diffusivity to conclude that
\begin{equation} \label{est h q beta}
	\| w(\theta) - w (\thetah)\|_{\LtwotWonedelta} \lesssim \|\theta -\thetah\|_{\LtwotWonedelta}, \quad w \in \{q, \tbeta\} \,,
\end{equation}
since $\|\theta\|_{\LinftWonedelta}$, $\|\thetah\|_{\LinftWonedelta} \leq C$. Similarly, we will exploit the estimate
\begin{equation} \label{est diff k terms}
	\begin{aligned}
		&\|\kW(\theta)-\kW(\thetah)\|_{\LtwotLinf} + \|\kK(\theta)-\kK(\thetah)\|_{\LtwotLinf}\\
		\lesssim&\, \|\theta-\thetah\|_{\LtwotLinf}\\
		\lesssim&\, \|\errhtheta\|_{\LtwotLinf}+\|\theta-\Rhtheta\|_{\LtwotLinf}
	\end{aligned}
\end{equation}
since $\|\theta\|_{\LinftLinf}$, $\|\thetah\|_{\LinftLinf} \leq C$. The error analysis of the heat equation should lead to bounds on $\|\errhtheta\|_{\LinftLinf}$ and $\|\errhtheta\|_{\LinftWonedelta}$ so that the terms $\|\errhtheta\|_{\LtwotLinf}$ and $\|\errhtheta\|_{\LtwotWonedelta}$ could be handled using \Gronwall's inequality. These can be obtained via the embedding \[\|\errhtheta\|_{\LinftLinf}+\|\errhtheta\|_{\LinftWonedelta} \lesssim \|\Dh \errhtheta\|_{\LinftLtwo}\] and a suitable bound on $\|\Dh \errhtheta\|_{\LinftLtwo}$.
\\
\indent To this end, we plan to employ Lemma~\ref{lemma: better estimate parabolic} on the semi-discrete heat subproblem, provided we have control of the right-hand side in the $H^1(0,t; \Ltwo)$ norm. We thus need to estimate $\|\calFhtheta\|_{\HonetLtwo}$ via bounds on
$\|\calFhtheta\|_{\LtwotLtwo}$ and $\| \delt \calFhtheta\|_{\LtwotLtwo}$.
We first estimate the defect term within $\calFhtheta$ using the usual approximation properties of the Ritz projection. To improve the readability, we postpone proofs of the next two lemmas to the Appendix.

\begin{lemma} \label{lemma: est defect theta}
	Under the assumptions of Theorem~\ref{thm main}, the following estimate holds:
	\begin{equation}
		\begin{aligned}
			\| \wtdeltatheta\|_{\LtwotLtwo}
			+
			\|\delt \wtdeltatheta\|_{\LtwotLtwo}
			\lesssim C(\|u\|_{\Xu}, \|\theta\|_{\Xtheta}) h^{\poldeg+1}.
		\end{aligned}
	\end{equation}
\end{lemma}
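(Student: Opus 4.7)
The plan is to split $\wtdeltatheta$ into three pieces and estimate each in $L^2_t(\Ltwo)$ and then in time-differentiated form. Writing $\wtdeltatheta = D_1 + D_2 + D_3$ with $D_1 \coloneqq \projRitz \thetat - \thetat$, $D_2 \coloneqq \nu(\projRitz \theta - \theta)$, and
\begin{equation*}
D_3 \coloneqq \alpha(\theta)(\zeta_1 u^2 + \zeta_2 \ut^2) - \alpha(\projRitz \theta)(\zeta_1 (\projRitz u)^2 + \zeta_2 (\projRitz \ut)^2),
\end{equation*}
the standard $L^2$-error for the Ritz projection from \eqref{approx properties projRitz} (combined with Aubin--Nitsche duality) immediately gives $\|D_1\|_{L^2_t(\Ltwo)} + \|D_2\|_{L^2_t(\Ltwo)} \lesssim h^{\poldeg+1}(\|\thetat\|_{L^2_t(H^{\poldeg+1})} + \|\theta\|_{L^2_t(H^{\poldeg+1})})$, both of which are controlled by $\|\theta\|_{\Xtheta}$ (using $\thetat \in L^\infty(H^{\poldeg+1}) \hookrightarrow L^2(H^{\poldeg+1})$). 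For $D_3$, I add and subtract $\alpha(\projRitz \theta)(\zeta_1 u^2 + \zeta_2 \ut^2)$ to split into $[\alpha(\theta) - \alpha(\projRitz \theta)](\zeta_1 u^2 + \zeta_2 \ut^2)$ and $\alpha(\projRitz \theta)[\zeta_1(u^2 - (\projRitz u)^2) + \zeta_2(\ut^2 - (\projRitz \ut)^2)]$. Applying the local Lipschitz property of $\alpha \in \Coneoneloc(\R)$, the factorizations $u^2 - (\projRitz u)^2 = (u - \projRitz u)(u + \projRitz u)$ and similarly for $\ut$, $L^\infty$-bounds on $u, \ut, \projRitz u, \projRitz \ut$ (from $\Xu$ and stability of $\projRitz$), and the standard $L^2$-Ritz errors for $\theta, u, \ut$ then yields the $h^{\poldeg+1}$ rate.

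For the $\delt \wtdeltatheta$ bound, I would differentiate termwise: $\delt D_2 = \nu D_1$ is handled above, and $\delt D_3$ expands by the product rule into a sum of terms whose structure mirrors that of $D_3$. These are controlled by the same Lipschitz/factorization/Ritz strategy, now invoking in addition $\thetat \in L^\infty(H^{\poldeg+1} \cap W^{1,\infty})$ (via $\delt \alpha(\theta) = \alpha'(\theta) \thetat$) and $\utt \in L^2(H^{\poldeg+1})$ (via $\delt[\ut^2] = 2\ut \utt$), both of which come with the right regularity to produce $h^{\poldeg+1}$. The delicate piece is $\delt D_1 = (\projRitz - \Id)\thetatt$, since $\thetatt$ does not appear in $\Xtheta$. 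To bypass this, I would use the heat equation to substitute
\begin{equation*}
\thetatt = \kappa \Delta \thetat - \nu \thetat + \delt \calQ(u, \ut, \theta),
\end{equation*}
so that $(\projRitz - \Id)\thetatt$ splits into $\kappa (\projRitz - \Id)\Delta \thetat - \nu (\projRitz - \Id)\thetat + (\projRitz - \Id)\delt\calQ$. The last two contributions admit direct $h^{\poldeg+1}$ Ritz estimates: $\thetat \in L^2(H^{\poldeg+1})$ is directly available, and a Sobolev algebra argument combining the $\Xu$- and $\Xtheta$-regularity of $u, \ut, \utt, \theta, \thetat$ gives $\delt \calQ \in L^2(H^{\poldeg+1})$.

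The main difficulty will be the remaining piece $\kappa (\projRitz - \Id)\Delta \thetat$: a direct Ritz error bound would require $\Delta \thetat \in L^2(H^{\poldeg+1})$, which is not available from the assumptions ($\Delta \thetat$ only inherits $L^\infty(H^{\poldeg-1})$ regularity from $\thetat \in L^\infty(H^{\poldeg+1})$). My plan is to test against $\phih \in V_h$, integrate the continuous Laplacian by parts to produce $(\Delta \thetat, \phih)_{L^2} = -a(\thetat, \phih)$, and then exploit the Ritz identity $a(\thetat, \phih) = a(\projRitz \thetat, \phih)$ together with $a(\projRitz \thetat, \phih) = -(\Delta_h \projRitz \thetat, \phih)_{L^2}$ to rewrite the offending term purely in terms of $\projRitz \thetat$; combined with the standard identity $\Delta_h \projRitz v = \projLtwo \Delta v$ and comparison estimates between the $L^2$- and Ritz projections, this shifts one derivative back onto $\thetat$ and recovers the full $h^{\poldeg+1}$ rate before collecting everything in terms of $C(\|u\|_{\Xu}, \|\theta\|_{\Xtheta})$.
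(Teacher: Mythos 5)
Your decomposition of $\wtdeltatheta$, and of the nonlinear part of $\delt \wtdeltatheta$, is exactly the paper's route: add-and-subtract, the factorizations $u^2-(\projRitz u)^2=(u-\projRitz u)(u+\projRitz u)$, local Lipschitz continuity of $\alpha$ and $\alpha'$, $L^\infty$-bounds from $\Xu$, $\Xtheta$ and the stability of $\projRitz$, and the Ritz approximation estimates. That part is fine. The deviation, and the gap, is in your treatment of $\delt D_1=(\projRitz-\Id)\thetatt$. The paper simply keeps $(\Rhthetatt-\thetatt,\phih)_{L^2}$ in the decomposition and bounds it by the approximation properties of the Ritz projection (tacitly using second-time-derivative regularity of $\theta$); your observation that $\thetatt$ does not appear in $\Xtheta$ is fair, but the repair you propose does not close the argument.

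After substituting the heat equation you must bound $\kappa(\projRitz-\Id)\Delta\thetat$ in $\LtwotLtwo$ with order $h^{\poldeg+1}$. Your chain of identities $(\Delta\thetat,\phih)_{L^2}=-a(\thetat,\phih)=-a(\projRitz\thetat,\phih)=(\Delta_h\projRitz\thetat,\phih)_{L^2}$, combined with $\Delta_h\projRitz=\pi_h\Delta$, only rewrites the term as $\bigl((\projRitz-\pi_h)\Delta\thetat,\phih\bigr)_{L^2}$: no derivative has been shifted back onto $\thetat$, because $\Delta_h\projRitz\thetat$ \emph{is} $\pi_h\Delta\thetat$. You are left comparing two projections of the same function $\Delta\thetat$, and both projection errors are capped by the Sobolev regularity of $\Delta\thetat$, which under $\thetat\in L^\infty(0,T;H^{\poldeg+1}(\Omega))$ is only $H^{\poldeg-1}(\Omega)$; the best attainable rate is $\mathcal{O}(h^{\poldeg-1})$, two orders short of the claim. (Two further issues: $\projRitz\Delta\thetat$ is not defined since $\Delta\thetat\notin\Honezero$ in general, and for $\poldeg\ge 2$ your claimed $H^{\poldeg+1}$ regularity of $\delt\calQ$ needs more smoothness of $\alpha$ than the assumed $\Coneoneloc$.) In short, the PDE substitution trades $\thetatt$ for $\Delta\thetat$, which has strictly less of the spatial regularity you need, and no projection-comparison trick can recover it; the step that closes the proof is the paper's direct estimate of $(\projRitz-\Id)\thetatt$ via the Ritz approximation property, which does require $\thetatt\in L^2(0,T;H^{\poldeg+1}(\Omega))$ as an (implicit) additional assumption.
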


\begin{proof}
	The proof is given in Appendix~\ref{sec:appendix}.
\end{proof}

This result enables us to estimate $\calFhtheta$ and $\delt \calFhtheta$.
\begin{lemma} \label{lemma: est calFhtheta}
	Under the assumptions of Theorem~\ref{thm main}, we have the following two estimates on $[0, \finalth]$
	\begin{equation}
		\begin{aligned}
			\| \calFhtheta\|_{\LtwotLtwo} \leq&\, C(\|u\|_{\Xu}, \|\theta\|_{\Xtheta}) \begin{multlined}[t]\Bigl\{ h^{\poldeg+1}+\|\errhu\|_{\LtwotLtwo}
				+\|\errhut\|_{\LtwotLtwo} +
				\|\errhtheta\|_{\LtwotLtwo}
				\Bigr\},
			\end{multlined}
			\\
			\|\delt \calFhtheta\|_{\LtwotLtwo} \leq&\, C(\|u\|_{\Xu}, \|\theta\|_{\Xtheta}) \begin{multlined}[t]\Bigl\{h^{\poldeg+1}+\|\errhtheta\|_{\LtwotLtwo}+\|\errhthetat\|_{\LtwotLtwo}\\
				+ 	\|\Delta_h \errhtheta\|_{\LtwotLtwo}
				+\|\errhu\|_{\LtwotLtwo }\\ + \|\errhut\|_{\LtwotLtwo }
				+	\|\nabla \errhut \|_{\LtwotLtwo }
				\\
				+
				\|\alpha(\thetah) \,  \uht\|_{\LinftLinf} \|\errhutt\|_{\LtwotLtwo} \Bigr\} ,
			\end{multlined}
		\end{aligned}
	\end{equation}
	where the constants are independent of $h$ and $\finalth$.
\end{lemma}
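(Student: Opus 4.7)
The strategy is to split $\calFhtheta$ via \eqref{calFhTheta} into the defect $\wtdeltatheta$, whose $\LtwotLtwo$ and $\HonetLtwo$ norms are already controlled by Lemma~\ref{lemma: est defect theta} (contributing $h^{\poldeg+1}$ to both bounds), plus the nonlinear residual. For the residual, I use the telescoping
\begin{align*}
&\alpha(\Rhtheta)(\zeta_1 (\Rhu)^2 + \zeta_2 (\Rhut)^2) - \alpha(\thetah)(\zeta_1 \uh^2 + \zeta_2 (\uht)^2) \\
&\qquad = (\alpha(\Rhtheta)-\alpha(\thetah))(\zeta_1 (\Rhu)^2 + \zeta_2 (\Rhut)^2)
+ \alpha(\thetah)\zeta_1 \errhu(\Rhu+\uh)
+ \alpha(\thetah)\zeta_2 \errhut(\Rhut+\uht),
\end{align*}
which isolates one factor of $\errhtheta$, $\errhu$, or $\errhut$ in each summand.

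For the first bound on $\|\calFhtheta\|_{\LtwotLtwo}$, I would apply H\"older's inequality in space and then in time to each of the three summands, using the uniform bounds on $\uh$, $\uht$, $\Rhu$, $\Rhut$, and $\alpha(\thetah)$ from Lemma~\ref{lemma: uniform bounds} and Corollary~\ref{corollary: boundedness} (together with stability of $\projRitz$ for the Ritz quantities), and exploiting the Lipschitz continuity of $\alpha$ inherited from $\alpha \in \Coneoneloc(\R) \subset \Czerooneloc(\R)$. This produces the three claimed contributions $\|\errhtheta\|_{\LtwotLtwo}$, $\|\errhu\|_{\LtwotLtwo}$, and $\|\errhut\|_{\LtwotLtwo}$ directly.

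For the second bound on $\|\delt\calFhtheta\|_{\LtwotLtwo}$, I would differentiate the three telescoped pieces by the product rule; the $\delt \wtdeltatheta$ contribution is again handled by Lemma~\ref{lemma: est defect theta}. The new terms arising from $\delt[\alpha(\Rhtheta) - \alpha(\thetah)]$ are treated via a second telescoping,
\[
\alpha'(\Rhtheta)\Rhthetat - \alpha'(\thetah)\thetaht
= \alpha'(\Rhtheta)\,\errhthetat + \bigl(\alpha'(\Rhtheta) - \alpha'(\thetah)\bigr)\thetaht.
\]
The first summand contributes $\|\errhthetat\|_{\LtwotLtwo}$. For the second, the Lipschitz continuity of $\alpha'$ (which is precisely the reason for requiring $\alpha \in \Coneoneloc(\R)$) gives $|\alpha'(\Rhtheta) - \alpha'(\thetah)| \lesssim |\errhtheta|$, and combined with the uniform bound $\|\thetaht\|_{\LinftLp{3}} \lesssim 1$ from Lemma~\ref{lemma: uniform bounds}, H\"older's inequality with exponents $(6,3)$, and the discrete Sobolev embedding $\|\errhtheta\|_{\Lsix} \lesssim \|\Delta_h \errhtheta\|_{\Ltwo}$ from \eqref{discrete_Sobolev_embedding}, this yields the characteristic $\|\Delta_h \errhtheta\|_{\LtwotLtwo}$ term. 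Differentiating the $u$-factors produces $\uhtt$; the critical piece $\alpha(\thetah)\zeta_2\,\uht\,\errhutt$ emerging from $\delt[\alpha(\thetah)\zeta_2 \errhut(\Rhut+\uht)]$ is bounded by factoring out $\alpha(\thetah)\uht$ in $\LinftLinf$, producing the final claimed term. Residual contributions containing $\Rhutt$, $\nabla\uh$, $\nabla\uht$ are routinely controlled using $\utt \in L^2(\Hpoldegone) \hookrightarrow L^2(\Linf)$ and the uniform bounds from Lemma~\ref{lemma: uniform bounds}.

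The main obstacle is carefully tracking all product-rule terms in the time derivative and aligning them with the claimed right-hand side; in particular, the combination of Lipschitz continuity of $\alpha'$ (hence the assumption $\alpha \in \Coneoneloc(\R)$) with the discrete Sobolev embedding to compensate for the limited regularity $\|\thetaht\|_{\LinftLp{3}} \lesssim 1$ is what forces the nontrivial $\|\Delta_h \errhtheta\|_{\LtwotLtwo}$ term that appears only in the second estimate, and the identification of $\alpha(\thetah)\uht$ as the correct prefactor for the $\errhutt$-contribution is what makes this term small enough later to be absorbed under the smallness condition \eqref{smallness condition u theta r}.
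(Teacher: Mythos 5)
Your proposal is correct and follows essentially the same route as the paper: the same first telescoping of the $\calQ$-residual into an $\alpha$-difference term plus $\errhu$- and $\errhut$-factored terms, the defect handled by Lemma~\ref{lemma: est defect theta}, and the critical $\errhutt$-contribution isolated with the prefactor $\|\alpha(\thetah)\uht\|_{\LinftLinf}$. The only (immaterial) deviation is in the second telescoping of $\alpha'(\Rhtheta)\Rhthetat-\alpha'(\thetah)\thetaht$, where you pair the $\alpha'$-difference with $\thetaht\in L^\infty_t(L^3)$ (forcing the $(6,3)$-H\"older/discrete-Sobolev step there), whereas the paper pairs it with $\Rhthetat\in\LinftLinf$ and instead picks up the $\|\Delta_h\errhtheta\|_{\LtwotLtwo}$ term from the piece containing $\Rhutt$; both routings land within the claimed right-hand side.
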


\begin{proof}
	The proof is given in Appendix~\ref{sec:appendix}.
\end{proof}

From \eqref{eq ehtheta timediff} via parabolic estimate \eqref{est heat} with the choice $\wh=\ehtheta$, we have
\begin{equation} \label{max norm est heat}
	\begin{aligned}
		&\|\ehttheta\|_{\LinftLtwo}+\|\Delta_h \ehtheta\|_{\LinftLtwo}+			\|\nabla \errhtheta\|_{\LinftLtwo}
		\\
		\lesssim&\, \| \calFhtheta\|_{\HonetLtwo}+\|\Dh \errhtheta(0)\|_{\Ltwo}+\|\nabla \errhtheta(0)\|_{\Ltwo} = \| \calFhtheta\|_{\HonetLtwo}.
	\end{aligned}
\end{equation}
on $[0, \finalth]$. In the next section, we will combine this bound with the results of Section~\ref{Sec: estimate wave} to complete the proof Theorem~\ref{thm main}.

\section{Proof of the main result: \emph{A priori} bounds for the coupled system} \label{Sec: proof of the main result}
In this section, we  prolong the existence of $(\uh, \thetah)$ to $[0,T]$ and prove the main theoretical result of this work stated in Theorem~\ref{thm main}.
\subsection{Uniform bound for the wave-heat system} We first combine the two bounds for the semi-discrete pressure and heat subproblems to obtain the following result.
\begin{proposition} \label{prop:result_on_finalth}
	Let the assumptions of Theorem~\ref{thm main} hold.  The following estimate holds:
	\begin{equation} \label{uniform est}
		\begin{aligned}
			&\begin{multlined}[t]
				\|\ehttu\|_{\LtwotLtwo}
				+
				\|\Delta_h \ehut\|_{\LtwotLtwo}
				+
				\|\Delta_h \ehu\|_{\LinftLtwo}
				\\ +
				\| \errhut\|_{\LinftHone}
				+\|\Delta_h \ehtheta\|_{\LinftLtwo}
				+
				\| \errhthetat\|_{\LinftLtwo}
				\leq	C(\norm{u}_{\Xu}, \norm{\theta}_{\Xtheta})  h^{\poldeg}
			\end{multlined}
		\end{aligned}
	\end{equation}
	for $t \in [0, \finalth]$, where the constant is independent of $h$ and $\finalth$.
\end{proposition}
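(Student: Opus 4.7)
The idea is to combine the parabolic bound \eqref{max norm est wave} for the wave subproblem with the parabolic bound \eqref{max norm est heat} for the heat subproblem, and then reduce the right-hand sides, given in Lemma~\ref{lemma: est calFhu} and Lemma~\ref{lemma: est calFhtheta}, to a form to which \Gronwall's inequality can be applied. Concretely, I would add the two inequalities to obtain a master estimate of the form
\begin{equation*}
\mathcal{E}_h(t) \lesssim h^{\poldeg}
+ \bigl(\norm{\tbeta(\thetah)}_{\LinftLinf} + o(1)\bigr)\|\Delta_h\ehut\|_{\LtwotLtwo}
+ \mathcal{S}_h\,\|\ehutt\|_{\LtwotLtwo}
+ \mathcal{R}_h(t),
\end{equation*}
where $\mathcal{E}_h$ stands for the left-hand side of \eqref{uniform est}, $\mathcal{S}_h \coloneqq \|\kW(\thetah)\uh\|_{\LinftLinf}+\|\kK(\thetah)\uht\|_{\LinftLinf}+\|\alpha(\thetah)\uht\|_{\LinftLinf}$ and $\mathcal{R}_h(t)$ collects lower-order nonlinear remainders in the errors $\errhu, \errhut, \errhtheta, \errhthetat$.

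Next, I would verify that $\norm{\tbeta(\thetah)}_{\LinftLinf}$ and $\mathcal{S}_h$ are genuinely small on $[0,\finalth]$, so that the corresponding terms on the right can be absorbed into the left of the master estimate. Here I would use the smallness assumption \eqref{smallness condition u theta r} on the exact $(u,\theta)$ together with the identities
\[
\kW(\thetah)\uh - \kW(\theta)u = (\kW(\thetah)-\kW(\theta))\uh + \kW(\theta)(\uh-u),
\]
and analogous ones for $\kK$, $\alpha$ and $\tbeta$. The differences $\uh-u$, $\uht-\ut$ and $\thetah-\theta$ in $\Linf$ can then be controlled as in the proof of Proposition~\ref{Prop: finalth positive} by combining \eqref{eq:Ritz_Linf_hhalf} with the discrete embedding \eqref{discrete_Sobolev_embedding} and the uniform control $h^{-1/2-\eps}(\norm{\Delta_h \errhu}_{\Ltwo}+\norm{\Delta_h\errhtheta}_{\Ltwo}+\norm{\errhut}_{\Hone}) \leq C_0$ enforced by the definition \eqref{def:finaltimeh} of $\finalth$. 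This yields $\norm{\tbeta(\thetah)}_{\LinftLinf},\mathcal{S}_h \leq r + o(1)$ for $h$ small enough, hence absorbable by choosing $r$ sufficiently small.

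After absorption, the remainder $\mathcal{R}_h(t)$ still contains the coefficient differences $\|q(\theta)-q(\thetah)\|_{\LtwotWonedelta}$, $\|\tbeta(\theta)-\tbeta(\thetah)\|_{\LtwotWonedelta}$ and $\|\kW(\theta)-\kW(\thetah)\|_{\LtwotLinf}+\|\kK(\theta)-\kK(\thetah)\|_{\LtwotLinf}$. These are handled through \eqref{est h q beta} and \eqref{est diff k terms}, splitting $\theta-\thetah = (\theta-\Rhtheta) + \errhtheta$ so that the projection part contributes $\mathcal{O}(h^\eta)$ via the Ritz approximation properties in Section~\ref{Sec: Ritz proj}, and the discrete part is controlled by
\[
\|\errhtheta\|_{\LtwotLinf}+\|\errhtheta\|_{\LtwotWonedelta} \lesssim \|\Delta_h\errhtheta\|_{\LtwotLtwo} \lesssim \int_0^t \|\Delta_h\errhtheta\|_{\LinftLtwo(0,s)}^2 \ds,
\]
which is a time integral of (a component of) $\mathcal{E}_h$. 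The remaining terms in $\mathcal{R}_h$ such as $\|\errhu\|_{\LtwotLtwo}, \|\errhut\|_{\LtwotLtwo}, \|\errhthetat\|_{\LtwotLtwo}$ are also time integrals of the quantities on the left, so a direct application of \Gronwall's inequality closes the estimate and produces \eqref{uniform est} with constants independent of $h$ and $\finalth$.

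The main obstacle, as I see it, is the absorption step: one must make precise that on the interval $[0,\finalth]$ the smallness of the continuous coefficients in \eqref{smallness condition u theta r} transfers to their discrete counterparts without inflating the hidden constants. This requires carefully combining the smallness radius $r$, the threshold $h_0$, the bootstrap radius $C_0$ from \eqref{def:finaltimeh}, and the $o(1)$ terms arising from the application of Lemma~\ref{Lemma: est Ritz product_v2} in Lemma~\ref{lemma: est tbeta q terms}, so that all these terms can be accommodated uniformly in $h$ and in $\finalth$, allowing the subsequent \Gronwall step to yield a bound with constants depending only on $\|u\|_{\Xu}$ and $\|\theta\|_{\Xtheta}$.
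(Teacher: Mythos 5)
Your plan is correct and follows essentially the same route as the paper: summing the parabolic bounds \eqref{max norm est wave} and \eqref{max norm est heat}, invoking Lemmas~\ref{lemma: est calFhu} and~\ref{lemma: est calFhtheta}, absorbing the $\tbeta(\thetah)$- and $\kW/\kK/\alpha$-weighted terms via the smallness condition \eqref{smallness condition u theta r} transferred to the discrete quantities through the $\Linf$-estimates at time $t$ (as in \eqref{eq:thetat_theta_Linf}--\eqref{eq:k_u_products_Linf}) and the bootstrap bounds in \eqref{def:finaltimeh}, and closing with \Gronwall's inequality on the remaining lower-order error terms. You also correctly pinpoint the absorption step as the delicate part, which is exactly where the paper decreases $r$ and $h_0$.
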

\begin{proof}
	Adding the two derived bounds \eqref{max norm est wave} and \eqref{max norm est heat} for the semi-discrete acoustic and heat subproblems leads to
	\begin{equation}
		\begin{aligned}
			&\begin{multlined}[t]
				\|\ehttu\|_{\LtwotLtwo}
				+
				\|\Delta_h \ehut\|_{\LtwotLtwo}
				+
				\|\Delta_h \ehu\|_{\LinftLtwo}
				+
				\| \errhut\|_{\LinftHone} \\
				+\|\Delta_h \ehtheta\|_{\LinftLtwo}
				+
				\| \errhthetat\|_{\LinftLtwo}+	\|\nabla \errhtheta\|_{\LinftLtwo}
			\end{multlined}  \\
			\lesssim&\, \begin{multlined}[t]
				\| \calFhu\|_{\LtwotLtwo}+ \| \calFhtheta\|_{\HonetLtwo}.
			\end{multlined}
		\end{aligned}
	\end{equation}
	On account of Lemmas~\ref{lemma: est calFhu} and~\ref{lemma: est calFhtheta}, we then find that
	\begin{equation} \label{interim est}
		\begin{aligned}
			&\begin{multlined}[t]
				\|\ehttu\|_{\LtwotLtwo}
				+
				\|\Delta_h \ehut\|_{\LtwotLtwo}
				+
				\|\Delta_h \ehu\|_{\LinftLtwo}
				\\
				+
				\|\nabla \errhut\|_{\LinftLtwo}
				+
				\|\Delta_h \ehtheta\|_{\LinftLtwo}
				+
				\| \errhthetat\|_{\LinftLtwo}
			\end{multlined}  \\
			\lesssim&\,	\begin{multlined}[t] C(\norm{u}_{\Xu}, \norm{\theta}_{\Xtheta}) \Bigl\{ h^{\poldeg}+\|\kW(\theta)-\kW(\thetah))\|_{\LtwotLinf}
				+ \|\kK(\theta)-\kK(\thetah))\|_{\LtwotLinf}
				+ \|\errhu\|_{\LtwotLinf}\\
				+\|\errhut\|_{\LtwotLtwo}+	\norm{ \nabla \ehu}_{\LtwotLtwo}
				+	\norm{ \nabla \ehtu}_{\LtwotLtwo}\\+\|\Dh \errhu\|_{\LtwotLtwo}+\|\errhtheta\|_{\LtwotLtwo}+\|\errhthetat\|_{\LtwotLtwo} \Bigr\}\\
				+\norm{ q(\theta) -  q(\thetah)}_{L^2_t(\Wonedelta)}+\norm{ \tbeta(\theta) -  \tbeta(\thetah)}_{L^2_t(\Wonedelta)}
				+ \calR,
			\end{multlined}
		\end{aligned}
	\end{equation}
	where we have introduced the following short-hand notation:
	\begin{equation}
		\begin{aligned}
			\calR \coloneqq &\,	\begin{multlined}[t]\norm{ \Delta_h \ehut }_{\LtwotLtwo}
				\bigl(	\norm{ \tbeta(\thetah)}_{\LinftLinf} + o(1) \bigr)
				+	\|\kW(\thetah) \ \uh\|_{\LinftLinf}
				\|\errhutt\|_{\LtwotLtwo}
				\\
				+ \|\kK(\thetah) \, \uht\|_{\LinftLinf} \|\errhutt\|_{\LtwotLtwo}
				+	\|\alpha(\thetah) \, \uht\|_{\LinftLinf}\|\errhutt\|_{\LtwotLtwo}.
			\end{multlined}
		\end{aligned}
	\end{equation}
	We first have, using \eqref{est diff k terms},
	\begin{equation}
		\begin{aligned}
			&\|\kW(\theta)-\kW(\thetah))\|_{\LtwotLinf} + \|\kK(\theta)-\kK(\thetah))\|_{\LtwotLinf}
			%
			\lesssim
			 \|\Delta_h \errhtheta\|_{\LtwotLtwo}+\|\theta-\Rhtheta\|_{\LtwotLinf}.
		\end{aligned}
	\end{equation}
	The approximation properties of the Ritz projection stated in \eqref{approx properties projRitz} then yield
	\begin{equation}
		\begin{aligned}
			&\|\kW(\theta)-\kW(\thetah))\|_{\LtwotLinf}\| + \|\kK(\theta)-\kK(\thetah))\|_{\LtwotLinf} 
			\lesssim
  \|\Delta_h \errhtheta\|_{\LtwotLtwo}+ h^{\poldeg} \|\theta\|_{L^2(W^{\poldeg, \infty}(\Omega))}.
		\end{aligned}
	\end{equation}
	The difference of $q$ and $\tbeta$ terms can be further estimated as follows:
	\begin{equation}
		\begin{aligned}
			&\norm{ q(\theta) -  q(\thetah)}_{\LtwotWonedelta}+\norm{ \tbeta(\theta) -  \tbeta(\thetah)}_{\LtwotWonedelta} \\
			\leq&\, C(\|\theta\|_{\LinftWdelta}, \|\thetah\|_{\LinftWdelta})\|\theta-\thetah\|_{\LtwotWdelta}\\
			\leq &\, C(\|\theta\|_{\LinftWdelta}, \|\thetah\|_{\LinftWdelta})\big(\|\errhtheta\|_{\LtwotWdelta}\\
			&\,\qquad +\|\theta-\Rhtheta\|_{\LtwotWdelta} \big),
		\end{aligned}
	\end{equation}
	and together with the discrete Sobolev embedding \eqref{discrete_Sobolev_embedding} (with $\dimension+\delta  \leq 6$) it holds
	\begin{equation}
		\|\errhtheta\|_{\LtwotWonedelta} \leq C \|\Delta_h \errhtheta\|_{\LtwotLtwo}.
	\end{equation}
	Again by the approximation properties of the Ritz projection stated in \eqref{approx properties projRitz}, we then have (with $\dimension+\delta  \geq 2$)
	\begin{equation}
		\begin{aligned}
			\norm{ q(\theta) -  q(\thetah)}_{\LtwotWonedelta}+\norm{ \tbeta(\theta) -  \tbeta(\thetah)}_{\LtwotWonedelta} 
			\lesssim
			\|\errhtheta\|_{\LtwotWdelta}+h^{\poldeg}\|\theta\|_{L^2(W^{1+\poldeg, \dimension+\delta }(\Omega))},
		\end{aligned}
	\end{equation}
	where we have also used the fact that $\|\theta\|_{\LinftWdelta}$, $\|\thetah\|_{\LinftWdelta} \lesssim 1$. \\
	\indent It remains to discuss the terms within $\calR$. Observe that these terms cannot be handled using \Gronwall's inequality. Instead, we rely on the smallness of
	\begin{equation}
		\begin{aligned}
			& 
			+ \|\kW(\thetah) \uh\|_{\LinftLinf}
			+
			\|\kK (\thetah) \uht\|_{\LinftLinf}
			+
			\|\alpha(\thetah) \uht\|_{\LinftLinf}
		\end{aligned}
	\end{equation}
	to absorb them by the left-hand side of \eqref{interim est}. This smallness can be achieved
	by the same computations as in \eqref{eq:thetat_theta_Linf} and  \eqref{eq:h_uh_Linf} for time $t$ instead of $0$,
	and performing the estimate \eqref{eq:k_u_products_Linf} also for
	$\tbeta(\thetah)$ and $\alpha(\thetah)\uht$.
	In fact, by the smallness condition
	\eqref{smallness condition u theta r}
	of the exact solution,
	we obtain
	\begin{equation}
		\begin{aligned}
			\norm{ \tbeta(\thetah)}_{\LinftLinf}
			+
			\|\kW(\thetah) \uh\|_{\LinftLinf}
			+
			\|\kK (\thetah) \uht\|_{\LinftLinf}
			+
			\|\alpha(\thetah) \uht\|_{\LinftLinf}
			\leq r + C h_0^{\eps}.
		\end{aligned}
	\end{equation}
	Then by decreasing $r$ and $h_0$, the  $\errhu$ and $\errhtheta$ terms within $\calR$ can be absorbed by the left-hand side.  An application of \Gronwall's inequality thus yields
	\begin{equation}
		\begin{aligned}
			&\begin{multlined}[t]
				\|\ehttu\|_{\LtwotLtwo}
				+
				\|\Delta_h \ehut\|_{\LtwotLtwo}
				+
				\|\Delta_h \ehu\|_{\LinftLtwo}
				+
				\|\nabla \errhut\|_{\LinftLtwo}
				\\
				+\|\Delta_h \ehtheta\|_{\LinftLtwo}
				+
				\| \errhthetat\|_{\LinftLtwo}
				\leq	C(\norm{u}_{\Xu}, \norm{\theta}_{\Xtheta})  h^{\poldeg},
			\end{multlined}
		\end{aligned}
	\end{equation}
	as claimed.
\end{proof}
\subsection{Prolonging the interval of existence} We are now ready for the final step in the well-posedness and error analysis, which will complete the proof of the main theoretical result of this work.
\begin{proof}[Proof of Theorem~\ref{thm main}]
	Since the energy estimate \eqref{uniform est} holds on $[0, \finalth]$, we have
	\begin{equation}
		\begin{aligned}
			\|\Delta_h \ehu(\finalth)\|_{\Ltwo}
			+
			\| \errhut(\finalth)\|_{\Hone}
			&\leq
			C(\norm{u}_{\Xu}, \norm{\theta}_{\Xtheta})  h^{\poldeg}
			\\
			\|\Delta_h\ehtheta (\finalth)\|_{\Ltwo}
			&\leq	C(\norm{u}_{\Xu}, \norm{\theta}_{\Xtheta})  h^{\poldeg}.
		\end{aligned}
	\end{equation}
	On account of $\poldeg \geq 1$, we can then guarantee that
	\begin{equation}
		\begin{aligned}
			\|\Delta_h \ehu(\finalth)\|_{\Ltwo}
			+
			\| \errhut(\finalth)\|_{\Hone} &< C_0 h^{1/2+\eps},
			\\
			\|\Delta_h\ehtheta (\finalth)\|_{\Ltwo}
			&< C_0 h^{1/2},
		\end{aligned}
	\end{equation}
	provided $h_0$ is sufficiently small.
	Therefore, by the same estimate for time $\finalth$ as in
	\eqref{eq:thetat_theta_Linf},  \eqref{eq:h_uh_Linf}, and \eqref{eq:k_u_products_Linf}
	$(\uh(\finalth), \uht(\finalth), \thetah(\finalth)) \in U_h$,
	where we recall that $U_h$ was defined in \eqref{def_Uh}. We can thus use the same reasoning from before but starting at the time $t=\finalth$ to prolong the existence of solutions beyond $\finalth$. The definition of $\finalth$ in \eqref{def:finaltimeh} then implies
	$\finalth=T$. Thus, the error estimate in \eqref{uniform est} holds on $[0,T]$. This completes the proof.
\end{proof}

\section{Numerical experiments} \label{Sec: Numerical experiments}

In this section, we explore fully discrete numerical approximations of the pressure-temperature  system \eqref{coupled_problem} and present numerical examples in two-dimensional spatial domains. For all the numerical examples of this section, we set $\Omega$ to be a bounded open domain of $\mathbb{R}^2$. We let $\{\mathcal{T}_h\}_{h>0}$ be a family of quasi-uniform triangulations of $\Omega$, which for a given meshsize $h>0$, every element $K\in \mathcal{T}_h$ corresponds to a triangle of diameter $h_K \leq h$. For the spatial discretization, we make use of the continuous Lagrangian finite element space $V_h$  of polynomial degree $\poldeg\geq 1$ introduced in \eqref{def:Vh}
for both the wave and heat equations.

To handle the nonlinearity arising in the functional $\mathcal{N}$ present in the wave equation in \eqref{coupled_problem}, we used a fixed-point iteration method. In addition, to treat the nonlinear $\theta$-dependent functions in the heat equation, we use a semi-implicit discretization in time. We let $\tau>0$ be the timestep and define $t^n := n\tau$ as the discrete time points for all $n\in \mathbb{N}$, and use the superscript notation $(\cdot)^n$ to denote time evaluations with $t=t^n$. Regarding the time discretizations, given a time-dependent function $a = a(t)$, we consider the backward Euler scheme:
\begin{align}\label{BDF1}
	\partial_\tau a^{n+1} = \dfrac{1}{\tau}\big(a^{n+1} - a^{n}\big)\quad\text{and}\quad
	\partial_\tau^2 a^{n+1} = \dfrac{1}{\tau^2}\big(a^{n+1}-2a^{n}+a^{n-1}\big),
\end{align}
and the second-order backward differentiation formulae (BDF2):
\begin{align}
	\begin{aligned}\label{BDF2}
		\partial_\tau a^{n+1} & = \dfrac{1}{\tau}\big(\tfrac{3}{2}a^{n+1} - 2a^{n} + \tfrac{1}{2}a^{n-1}\big),\\[1ex]
		\partial_\tau^2 a^{n+1} & =  \dfrac{1}{\tau^2}\big(2a^{n+1} - 5a^{n}+4a^{n-1} - a^{n-2}\big).
	\end{aligned}
\end{align}
We point out that in \cite{KovLL16}, it is shown that estimates as in Lemma~\ref{lemma: L2L2 estimate} carry over to time discretization with the implicit Euler and the BDF2 scheme.
To facilitate the writing of the fixed-point iteration method employed in the approximation of the wave equation, we define the following discrete operators
\begin{align}
	\delta_1 a^n:=  \begin{cases}
		a^{n}& \textup{Euler},\\[1mm]
		2a^{n} - \tfrac{1}{2}a^{n-1} & \textup{BDF2},\\[1mm]
	\end{cases}\qquad
	\delta_2 a^n:= \begin{cases}
		2a^{n} - a^{n-1}& \textup{Euler},\\[1mm]
		5a^{n} - 4 a^{n-1}  + a^{n-2}& \textup{BDF2},\\[1mm]
	\end{cases}
\end{align}
and the pair of constants $(\varsigma_1,\varsigma_2) = (1,1)$ for the implicit Euler method and $(\varsigma_1,\varsigma_2) = (\tfrac{3}{2}, 2)$ for BDF2. Note that $\varsigma_1$ and $\varsigma_2$ are the numbers multiplying $a^{n+1}$ in the first and second time derivatives, respectively, given for both time approximations. Then, with this notation, we can readily write
\begin{align} \label{eq:dtdiff}
	\partial_\tau a = \dfrac{1}{\tau}\big(\varsigma_1 a^{n+1} - \delta_1 a^n\big)\qquad \text{and}\qquad
	\partial_\tau^2 a = \dfrac{1}{\tau^2}\big(\varsigma_2 a^{n+1} - \delta_2 a^n\big).
\end{align}

\subsection*{Fully-discrete scheme} \label{subsec:semi-discrete} In order to perform the fixed-point iteration procedure, we first separate the terms containing the second-order time derivative of $u$ in \eqref{def calN}. For this purpose, we introduce two additional functionals $\calN_1$ and $\calN_2$, defined by
\begin{align}
	\begin{aligned}
		\calN_1(\theta,u,\ut) & \coloneqq \begin{cases}
			1+2\kW(\theta)u   &\textup{Westervelt}, \\[1mm]
			1+2\kK(\theta)u_t  &\textup{Kuznetsov},
		\end{cases}\\
		\calN_2(\theta,u_t, \nabla u, \nabla u_t) & \coloneqq \begin{cases}
			2\kW(\theta)u_t^2   &\, \textup{Westervelt}, \\[1mm]
			2\nabla u\cdot \nabla \ut  \quad  & \,\textup{Kuznetsov}.
		\end{cases}
	\end{aligned}
\end{align}
Observe that independent of the type of wave equation in \eqref{coupled_problem}, $\calN_1$ corresponds to the nonlinear coefficient of $\partial_t^2 u$, while $\calN_2$ contains the remaining non-linear terms of $\calN$.

Next, given $\uh^n,\uh^{n-1},\thetah^n\in V_{h}$,  and eventually $\uh^{n-2},\thetah^{n-1}\in V_{h}$ (cf.~\eqref{BDF2}), the fixed-point iteration procedure to update the wave equation from time $t^n$ to $t^{n+1}$, is established as follows. We define auxiliary variables $u_h^{(i)}\in V_{h}$ with $i\in\mathbb{N}$, and set $u_h^{(0)} = u_{h}^{n}$. Then, given $u_h^{(i)}$, the iterative process continues by finding $u_h^{(i+1)}\in V_{h}$ such that
\begin{align} \label{fully-discrete-wave}
	\begin{aligned}
		&\big(\calN_{1}^{(i)}\,\varsigma_2 u_{h}^{(i+1)}, \phih\big)_{L^2} + \tau^2 a\big(u_{h}^{(i+1)}, q(\thetah^n) \phih\big) +  \tau a\big(\varsigma_1 u_{h}^{(i+1)}, \beta (\thetah^{n}) \phih\big)\\
		&\quad = \big(\calN_{1}^{(i)}\,\delta_2 u_h^n, \phih\big)_{L^2} + \tau a\big(\delta_1 u_h^n, \beta (\thetah^{n}) \phih\big)
		-\tau^2\big(\calN_{2}^{(i)}-\fph^{n+1}, \phih\big)_{L^2}
	\end{aligned}
\end{align}
for all $\phih \in V_{h}$, where the first and second time derivatives have been written using \eqref{eq:dtdiff} to  separate the variables arising in the iterative process from those of the discrete unknown at previous time steps, and
\begin{align}
	\calN_{1}^{(i)} & \coloneqq \, \calN_1\Big(\theta_h^n,u_{h}^{(i)},\tfrac{1}{\tau}\big(\varsigma_1 u_{h}^{(i)} - \delta_1 u_h^{n}\big) \Big),\\[1ex]
	\calN_{2}^{(i)} & \coloneqq \, \calN_2\Big(\theta^n_h,
	\tfrac{1}{\tau}\big(\varsigma_1 u_{h}^{(i)} - \delta_1 u_h^{n}\big), \nabla u_{h}^{(i)}, \tfrac{1}{\tau}\nabla\big(\varsigma_1 u_{h}^{(i)} - \delta_1 u_h^{n}\big)\Big).
\end{align}
The stopping criterion of the fixed-point iteration is set as follows:
\begin{equation}
	\dfrac{\|u_{h}^{(i+1)} - u_{h}^{(i)}\|_{\Ltwo }}{\|u_{h}^{(i+1)}\|_{\Ltwo}} < \texttt{tol},
\end{equation}
and the unknown is updated by setting $u_{h}^{n+1} = u_{h}^{(i+1)}$. For the heat equation, we discretize the equation in a semi-implicit fashion, and we solve for $\theta_h^{n+1}\in V_{h}$ such that
\begin{equation} \label{fully-discrete-heat}
	\begin{aligned}
		& (\varsigma_1\theta_h^{n+1}, \phih)_{L^2}
		+ \tau \kappa\, a(\thetah^{n+1}, \phih) + \tau \nu(\thetah^{n+1}, \phih)_{L^2}\\
		&\qquad\qquad =\, (\delta_1\theta_h^{n}, \phih)_{L^2} +  \tau \big(\calQ(\uh^{n+1}, \uth^{n+1}, \thetah^n), \phih\big)_{L^2},
	\end{aligned}
\end{equation}
for all $\phih\in V_{h}$. All numerical examples in this section have been implemented in Python using the open source finite element library FEniCSx \cite{barattaDOLFINxNextGeneration2023}. For the fixed-point iteration method, we use the tolerance $\texttt{tol} = 10^{-10}$. In addition, we remark that in order to properly initialize the BDF2 method, we perform the first time step with the implicit Euler method.
The codes to reproduce the results are available at
\[
\text{\mycode}
\]

\subsection{Example~1: Accuracy tests} \label{numex:1}
To determine the numerical errors produced by our numerical scheme \eqref{fully-discrete-wave}--\eqref{fully-discrete-heat} along with time discretizations \eqref{BDF1} or \eqref{BDF2}, we consider the unit square domain $\Omega = (0,1)^2$, and manufactured solutions $u_{\rm ex}\coloneqq u_{\rm ex}(x,t)$ and $\theta_{\rm ex}\coloneqq \theta_{\rm ex}(x,t)$. Then, the resulting terms arising after replacing $u=u_{\rm ex}$ and $\theta = \theta_{\rm ex}$ in \eqref{coupled_problem} are supplemented to the wave and heat equations through the respective source terms:
\begin{align}
	f_{\rm ex}(x,t)  \coloneqq &\,\,\partial_{t}^2 u_{\rm ex}-q(\theta_{\rm ex})\Delta u_{\rm ex} - \beta(\theta_{\rm ex}) \Delta ( \partial_t u_{\rm ex})\\
	& \, +\,  \calN\big(u_{\rm ex}, \partial_t u_{\rm ex}, \partial_{t}^2 u_{\rm ex}, \nabla u_{\rm ex}, \nabla (\partial_t u_{\rm ex}), \theta_{\rm ex}\big),\\[1mm]
	g_{\rm ex}(x,t)  \coloneqq &\,\, \partial_t\theta_{\rm ex} -\kappa\Delta \theta_{\rm ex}+ \nu\theta_{\rm ex} - \calQ(u_{\rm ex}, \partial_t u_{\rm ex}, \theta_{\rm ex}).
\end{align}
We note that the forcing term $g_{\rm ex}$ was originally not present in the second equation of \eqref{coupled_problem} and is introduced only for numerical testing. However, one could easily extend the error analysis above to this case, but we refrain from giving any details here.
Given $\lambda_j, A_j >0$ for $j \in\{1,2\}$, we define the following smooth manufactured solutions:
\begin{align}
	u_{\rm ex}(x_1,x_2,t)      & \coloneqq A_1 \sin(2 \pi x_1)\sin(2\pi x_2) {\rm exp}(\lambda_1 t), \label{def:uex}\\[1ex]
	\theta_{\rm ex}(x_1,x_2,t) & \coloneqq A_2 \sin(4 \pi x_1)\sin(4\pi x_2) {\rm exp}(-\lambda_2 t),\label{def:thetaex}
\end{align}
which satisfy the zero Dirichlet boundary conditions ${u_{\rm ex}}_{|_{\partial\Omega}} = {\theta_{\rm ex}}_{|_{\partial\Omega}} = 0$. In addition, we set the discrete initial conditions as
\begin{align}
	u_{0,h} & = {\rm R}_h\, u_{\rm ex}(x_1,x_2,0),\quad 
	u_{1,h}  = {\rm R}_h\, \partial_t u_{\rm ex}(x_1,x_2,0),\quad 
	\theta_{0,h}  = {\rm R}_h \, \theta_{\rm ex}(x_1,x_2,0), 
\end{align}
for all $(x_1,x_2)\in \Omega$. For the error computations, we use a second order polynomial for the speed of sound $\sqrt{q}$ (truncated polynomial function for liver tissue in \cite{connor2002bio}) and respective sound diffusivity function: \begin{align}
	\sqrt{q(\theta)} & = 1529.3 + 1.6856\,(\theta+\Thetaa) + 6.1131\times 10^{-2}\, (\theta + \Theta_{\rm a})^2 \\
	\beta(\theta) & = \dfrac{2\tilde{\alpha}}{\omega^2}{(q(\theta))^{3/2}},
\end{align}
with $\Theta_{\rm a} = 37\, ^\circ{\rm C}$ being the ambient temperature, $\tilde{\alpha} = 4.5\times 10^{-6}\,\hat{f}\,{\rm Np \, m^{-1}}$ and $\omega = 2\pi \hat{f}$, for $\hat{f}=1\,\rm Hz$.
Furthermore, we set the coefficient functions $\kW$ and $\kK$ as
\begin{align}
	\kW(\theta) =\dfrac{6}{\rho_{\rm a} {q}(\theta)},\qquad \kK(\theta)=\dfrac{5}{{q}(\theta)}, \label{eq:kW:kK:ex1}
\end{align}
where $B/(2A) = 5$ (cf.~\eqref{nonlinearity coeffs}). For the heat equation, we set the constants $\kappa  = 1$ and $\nu= 10^{-5}$, the absorbed energy function
\begin{align}\label{def:Q:example1}
	\calQ(\theta,u,u_t) = \begin{cases}
		\dfrac{1}{2\rho_{\rm a}}\left( \dfrac{\tilde{\alpha}}{ {\sqrt{q(\theta)}}} u^2  +  \dfrac{2 b(\theta)}{{q^2}(\theta)}u_t^2\right) & \text{Westervelt},\\[2ex]
		\dfrac{\rho_{\rm a} \tilde{\alpha}}{{\sqrt{q(\theta)}}} u^2
		& \text{Kuznetsov},
	\end{cases}
\end{align}
with $\rho_{\rm a} = 1050\,\rm kg/m^3$. Given $\tau>0$ and $t=t^{n+1}$, we define the total error associated with our coupled numerical scheme at the time $t=t^{n+1}$ as follows
\begin{align} \label{def:total:error}
	\mathbf{E}_\tau(t^{n+1})  & =  \big\|\nabla \big( \delt u (t^{n+1})-\partial_\tau \uh^{n+1} \big) \big\|_{\Ltwo} + \big\| \delt\theta(t^{n+1})-\partial_{\tau}\thetah^{n+1}\big\|_{\Ltwo}\qquad\\
	& \quad  + \big\| \nabla (\theta(t^{n+1})-\thetah^{n+1})\big\|_{\Ltwo},
\end{align}
for all $n\geq 0$.
\begin{figure}[!t]
	\includegraphics[scale=0.425]{./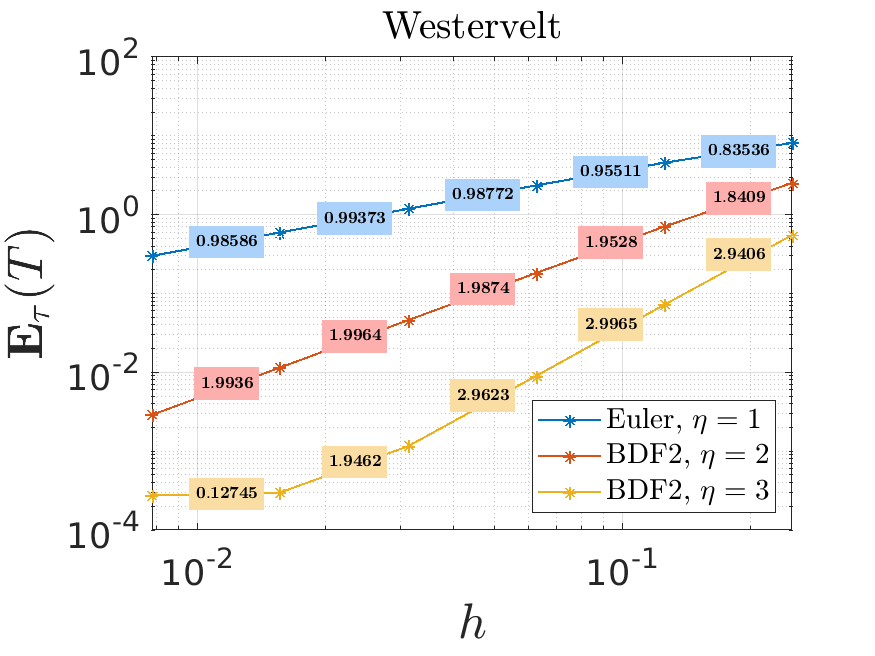}
	\includegraphics[scale=0.425]{./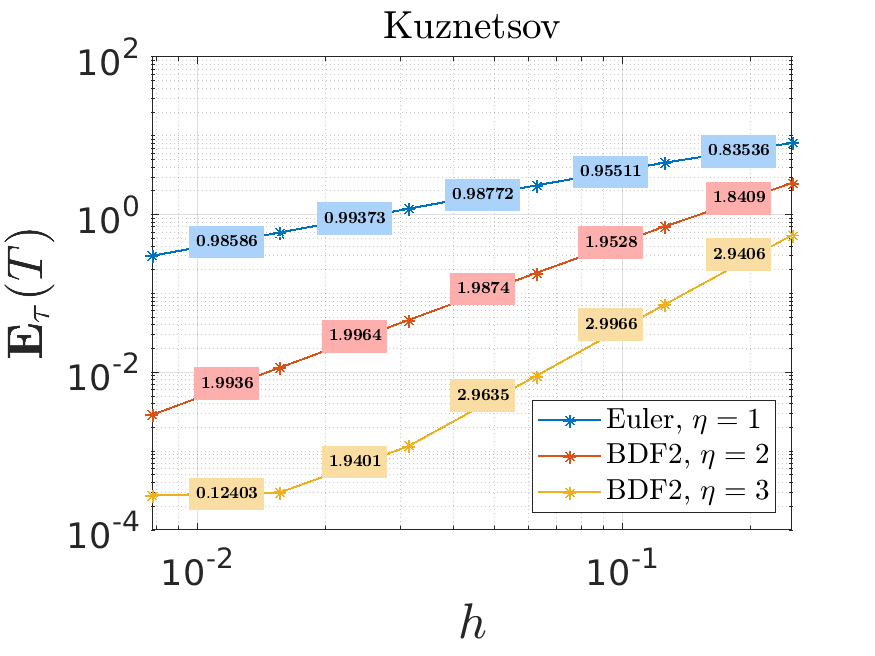}\\
	\includegraphics[scale=0.425]{./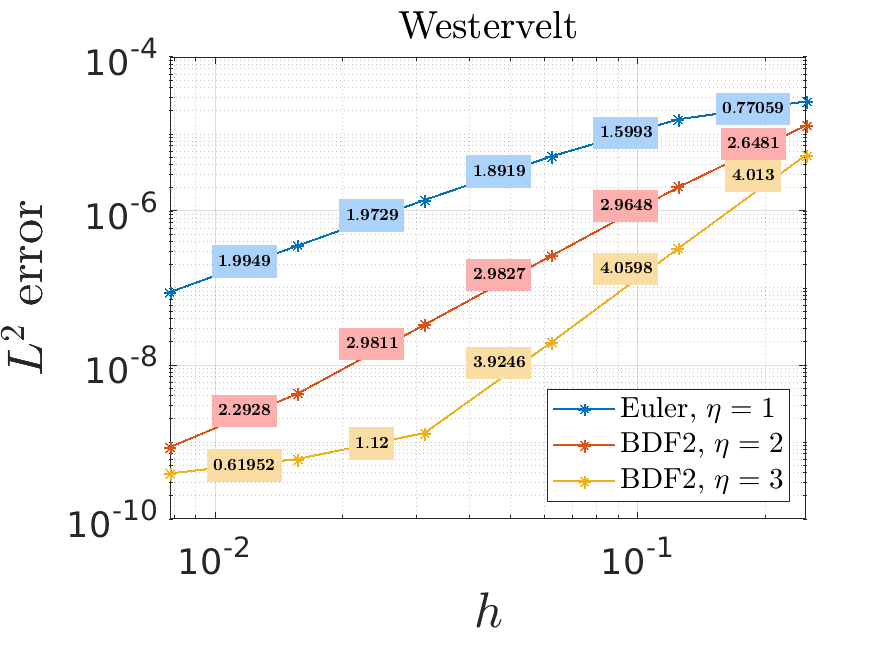}
	\includegraphics[scale=0.425]{./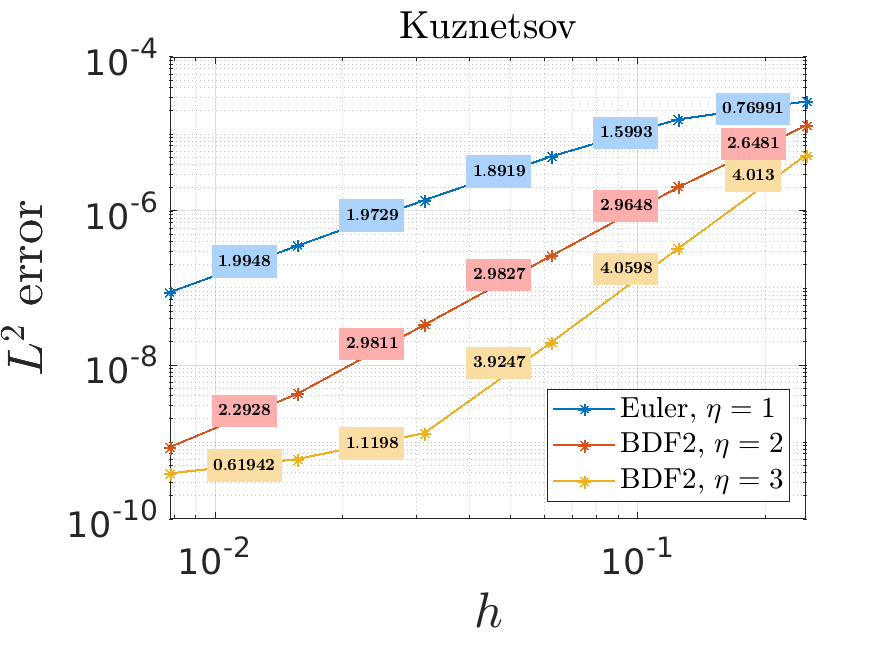}\\
	\caption{Total error \eqref{def:total:error} (first row) and $L^2$ error \eqref{def:total:error:Ltwo} (second row) computed from the numerical scheme making use of the implicit Euler method with $\poldeg=1$, and BDF2 method with $\poldeg=2$, and $\poldeg=3$, respectively. The errors are ploted against the meshsize $h$, for the common final time $T=1\,\rm s$ and timestep $\tau = 1/128\,{\rm s}=0.0078125\,{\rm s}$. The manufactured solutions are $u_{\rm ex}$ (cf.~\eqref{def:uex}) and $\theta_{\rm ex}$ (cf.~\eqref{def:thetaex}) with $A_1 = 1$, $A_2 = 10^{-4}$, $\lambda_1 = 1$, and $\lambda_2 = 1/2$.  \label{fig:errorplots1}}
\end{figure}
Figure~\ref{fig:errorplots1} shows the total error committed by our fully discrete numerical scheme for the two time approximations, implicit Euler and BDF2, with polynomial degrees $\poldeg = 1,2,3$, and for the Westervelt and Kuznetsov wave equations. For the two wave equations, the results show that in both cases, implicit Euler with $\poldeg=1$, and BDF2 with $\poldeg=2$, the orders of convergence are in agreement with what is predicted by Theorem~\ref{thm main}. For the case of BDF2 with $\poldeg=3$, we observe that the errors slopes tend to $\poldeg = 3$ until the order of convergence gets deteriorated due to the fact that $\tau$ overcomes $h$ in the error $\mathcal{O}(\tau^2+h^3)$. The later, explains the generation of a plateau as $h$ reaches the smallest values.

{
We further plotted in Figure~\ref{fig:errorplots1} the error in $L^2(\Omega)$ in space at time $T = N \tau$, i.e.
\begin{align} \label{def:total:error:Ltwo}
 \big\| u (T)- \uh^{N}  \big\|_{\Ltwo} + \big\| \theta(T)-\thetah^{N})\big\|_{\Ltwo},
\end{align}
 and observe the expected convergence of order $\eta +1 $. However, as explained in Remark~\ref{rem:L2_defect_u}, this is not covered by our theory.
}

\subsection{Example~2: Westervelt wave equation (initial excitation)} We let now $\Omega$ be composed by the union of the square region ${(-0.03, 0.03)\times(-0.04,0.04)}$ and the circular segment between $-\pi/4$ and $\pi/4$ of the circle centered at the origin with radius $0.05\,\rm m$. Therefore, $\Omega$ is not a polygonal domain and in this example (and in Example~3), we partially step aside from our theoretical framework. However, we still consider a polygonal approximation of the curved boundary of $\Omega$ to set the regular triangulation $\mathcal{T}_h$.
The domain, shown in both plots of Figure~\ref{fig:IC:source} is delimited by the black-dashed lines accounting for $\partial\Omega$. For this example, we simulate the Westervelt equation with manufactured initial condition described by the functions (in polar coordinates)
\begin{alignat}{2}
	\mathfrak{g}_0(r, \vartheta) \coloneqq 10^6  \cos\big(\tfrac{7}{4} \vartheta\big)\tfrac{3\pi}{r_0}r\exp\big(-\tfrac{3\pi}{r_0}r\big)\sin\big(\tfrac{15\pi}{r_0}r\big),\\ 
	\mathfrak{g}_1(r,\vartheta) \coloneqq 10^6  \cos\big(\tfrac{7}{4} \vartheta\big)\tfrac{3\pi}{r_0}r\exp\big(-\tfrac{3\pi}{r_0}r\big)\cos\big(\tfrac{15\pi}{r_0}r\big),
\end{alignat}
where $r$ is the radius measured from the origin and $\vartheta$ is the angle with respect to the $x_1$-axis, and $0.048\,{\rm m} = r_0 < 0.05\,{\rm m}$. Then, at $t=0$, we set $u$ and $\partial_t u$ as follows
\begin{align} \label{eq:IC:example2}
	\big(u_0(r,\vartheta),u_1(r,\vartheta)\big) =
	\begin{cases}
		(\mathfrak{g}_0(r,\vartheta), \mathfrak{g}_1(r,\vartheta)) & \text{ if }
		-\tfrac{2\pi}{7}\leq \vartheta \leq \tfrac{2\pi}{7},\quad 0\leq r\leq r_0,\\[1ex]
		(0,0) &\text{ otherwise in $\Omega$},
	\end{cases}
\end{align}
and the zero initial temperature $\theta_0\equiv 0$. Figure~\ref{fig:IC:source}a displays the plot of $u_0$ as a function of $(x_1,x_2)$ in $\Omega$. We remark that the initial conditions are built in order to satisfy the zero Dirichlet boundary conditions and being continuous functions within the domain. The initial wave amplitude is maximal at the $x_1$-axis, when $\vartheta = 0$ (see the continuous red line in Figure~\ref{fig:IC:source}) and it decreases to zero towards the lines $\vartheta = \pm 2\pi/7$ and $r=0$.

For the coefficient functions in \eqref{coupled_problem}, we set the temperature-dependent speed of sound {$\sqrt{q}$} as the fifth order polynomial function modeling liver tissue in \cite{connor2002bio}
\begin{align}
{\sqrt{q(\theta)}} & = 1529.3 + 1.6856\,(\theta+\Theta_{\rm a}) + 6.1131\times 10^{-2} \,(\theta+\Theta_{\rm a})^2\\
	& \quad - 2.2967 \times 10^{-3}\, (\theta+\Theta_{\rm a})^3 + 2.2657 \times 10^{-5}\,(\theta+\Theta_{\rm a})^4 \\
	& \quad - 7.1795 \times 10^{-8}\, (\theta+\Theta_{\rm a})^5.
\end{align}
In addition, the corresponding sound diffusivity $\beta$ and $\kW$ function are taken as in Example~1 after setting the frequency $\hat{f} = 100\,{\rm k Hz}$. No source term is included in the wave equation in this example, i.e., $f\equiv 0$.
The parameters used in the heat equation, which are related to liver tissue as the ambient 'a', and blood 'b' are taken from \cite[Table~3]{connor2002bio} to be
\begin{align}
	&\beta_{\rm a} = 6\,{\rm kg^3\,m^{-4}\,s^{-2}}, %
	\quad \rho_{\rm a} =1050\, {\rm kg\,m^{-3}}, \quad \rho_{\rm b} = 1030\, {\rm kg\,m^{-3}},\quad \Theta_{\rm a} = 37\, ^\circ{\rm C} \\
	&C_{\rm a}  = 3600\, {\rm J\,kg\, K^{-1}},\quad C_{\rm b} = 3620\,{\rm J\,kg\, K^{-1}},\quad \kappa_{\rm a} = 0.512\, {\rm W\,m^{-1}\,K^{-1}},
\end{align}
and the constant diffusion $\kappa$ and parameter $\nu$ are respectively given by
\begin{align}
	\kappa & =  \dfrac{\kappa_{\rm a}}{\rho_{\rm a}C_{\rm a}},\qquad\nu = \dfrac{\rho_{\rm b}C_{\rm b}}{\rho_{\rm a}C_{\rm a}}.
\end{align}
For the absorbed acoustic energy functional, we use the definition given in \eqref{def:Q:example1}.

\begin{figure}[t]
	\begin{tabular}{cc}
		\bf (a) & \bf  (b)\\
		\includegraphics[scale=0.45]{./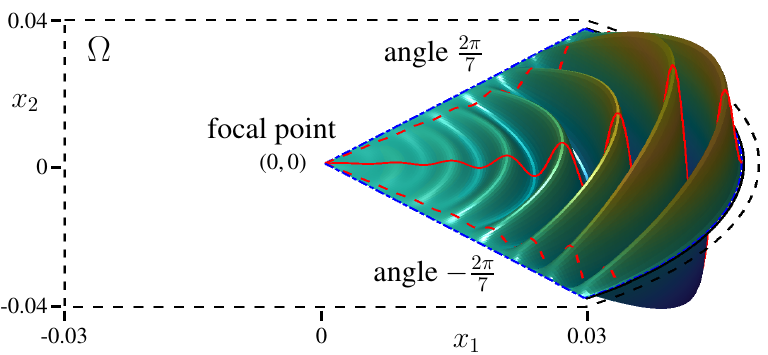} &
		\includegraphics[scale=0.45]{./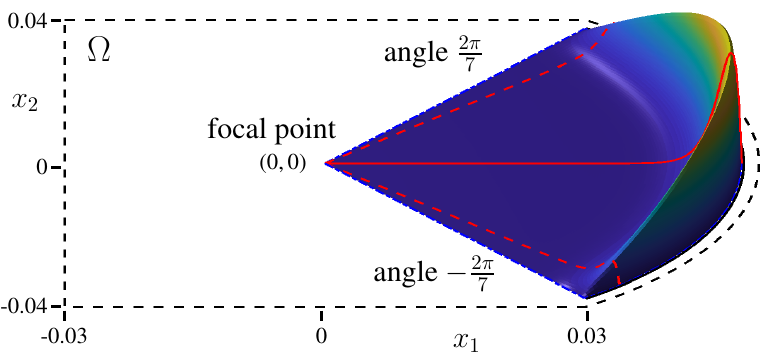}
	\end{tabular}
	\caption{ {\bf (a)} Initial pressure $u_0=u_0(x_1,x_2)$ used in Example~2, and {\bf (b)} source function $f = f(x_1,x_2,t)$ at $t=0$ employed in Example~3. The continuous red lines correspond to the respective functions at $x_2 = 0$ for $0\leq x_1\leq r_0$, and the red-dashed lines stand for the functions' plot at the angles $-\pi/4$ and $\pi/4$ respectively.
		The black-dashed contour coincide with the boundary $\partial \Omega$.\label{fig:IC:source}}
\end{figure}

For the numerical simulations, we use the BDF2 scheme with $\tau = 10^{-7}\,\rm s$ together with a polynomial approximation of degree $\poldeg = 1$, linear piecewise polynomials, and set $T= 4\times10^{-5}\,\rm s$. The meshsize is set to $h=7.6\times 10^{-4}\,\rm m$ and the number of elements of the used mesh is 38912. In Figure~\ref{fig:ex2:pressure}, we show the simulated pressure profiles $u_h^n$ at $n = 10,50,100,200$ and $300$. The combined effect of having the initial time derivative $u_1$, and $u_0$ given by \eqref{eq:IC:example2} is that the wave amplitude reaches its maximum towards the focal point instead of directly dissipating, and travels towards the left boundary. Furthermore, it can be clearly seen that initially the acoustic wave traveling in the direction of the $x_1$-axis gets reflected from the curved boundary. The discrete temperature, on the other hand, is presented in Figure~\ref{fig:ex2:temp}, in which it can be observed the heating effect that the ultrasound wave has on the focal area, where the temperature reaches its maximum.

\begin{figure}
	\centering
	\includegraphics[scale=0.76]{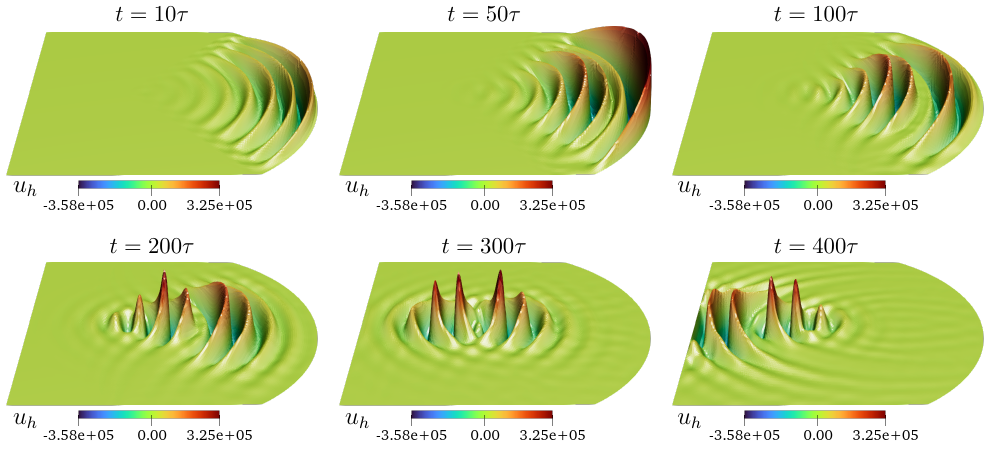}
	\caption{Example~2: Snapshots of the discrete pressure $u_h = u_h(x_1,x_2)$ computed with the Westervelt wave equation in \eqref{coupled_problem}, initial coditions \eqref{eq:IC:example2} and $\theta_0 \equiv 0$, and setting the source term $f = 0$. The time step is $\tau = 10^{-7}\,\rm s$ and $\poldeg = 1$, and the time approximation is by the BDF2 method. \label{fig:ex2:pressure}}
\end{figure}

\begin{figure}[t!]
	\centering
	\includegraphics[scale=0.76]{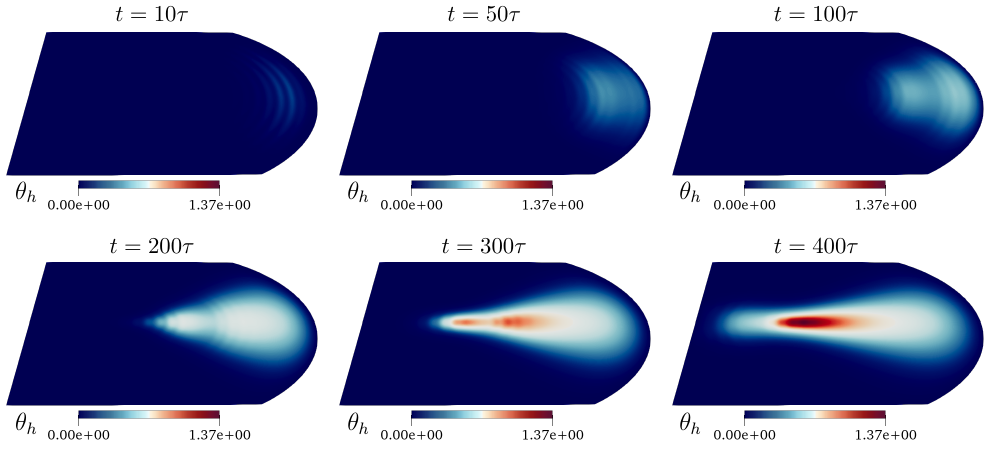}
	\caption{Example~2: Snapshots of the discrete temperature $\theta_h = \theta_h(x_1,x_2)$ computed with the Westervelt wave equation in \eqref{coupled_problem}, initial coditions \eqref{eq:IC:example2} and $\theta_0 \equiv 0$, and setting the source term $f = 0$. The time step is $\tau = 10^{-7}\,\rm s$ and $\poldeg = 1$, and the time approximation is by the BDF2 method. \label{fig:ex2:temp}}
\end{figure}

\subsection{Example~3: Kuznetsov wave equation (source excitation)} In this example, we explore the case of a high frequency excitation due to a source term on the wave equation for the Kuznetsov equation. Unlike Example~2, the unknown $u$ represents now the acoustic velocity potential. Under the same conditions of Example~2, meaning the same domain, parameters and coefficient functions (with $\kK$ as in \eqref{eq:kW:kK:ex1}), we set the source term function similarly as the initial conditions in Example~2 through the function (in polar coordinates)
\begin{align} \label{eq:source:ex3}
	\mathfrak{f}_0(r, \vartheta,t) \coloneqq 10^8  \cos\big(\tfrac{7}{4} \vartheta\big)
	\tfrac{1}{r_0}r\Big(\exp\big(-40 r/r_0\big) - \exp\big(-40\big) \Big)\cos(\omega t),
\end{align}
where the pair $(r,\vartheta)$ corresponds to the polar coordinates relative to $(x_1,x_2)\in \Omega$, and $r_0=0.048\,\rm m$. Then, we define the source term function as follows:
\begin{align}
	f(r,\vartheta,t) \coloneqq \begin{cases}
		\mathfrak{f}_0(r, \vartheta,t) &\text{ if }-\tfrac{2\pi}{7}\leq \vartheta \leq \tfrac{2\pi}{7},\quad 0\leq r\leq r_0,\\[1ex]
		0 &\text{ otherwise in $\Omega$}.
	\end{cases}
\end{align}
Figure~\ref{fig:IC:source}b) shows the plot of function $f$ at the time $t=0$. Varying time $t$, the described source term oscillates with angular frequency $\omega = 2\pi \hat{f}$, with $\hat{f} = 100\,\rm kHz$. This function is intended to mimic the effect of having an excitation due to Neumann boundary conditions, but keeping the unknowns to zero at the boundaries. To perform this numerical example, we use the BDF2 method with $\tau = 10^{-7}\,{\rm s}$, $T= 4\times10^{-5}\,\rm s$, linear elements with $\poldeg = 1$, and the same mesh as in Example~1.

In Figure~\ref{fig:pressure:temp:ex3}, we show snapshots of the discrete solution $u_h$ (top row), and discrete temperature (bottom row) at three times $t^n$ with $n = 200,300$ and $400$. Unlike the previous example, in which the wave is induced by the initial condition, in this case the oscillations of the source term continuously drive the wave. Then, sequential peaks of $u_h$ reach the focal area as time evolves. The approximated temperature $\theta_h$ is presented in Figure~\ref{fig:pressure:temp:ex3}, which is in the order of magnitude of $10^{-9}\rm \,^\circ C$.
The later can be explained due to the reduced magnitude of $u_h$, which directly influences the strength of the absorbed energy function $\calQ$.

\begin{figure}[t!]
	\centering
	\includegraphics[scale=0.76]{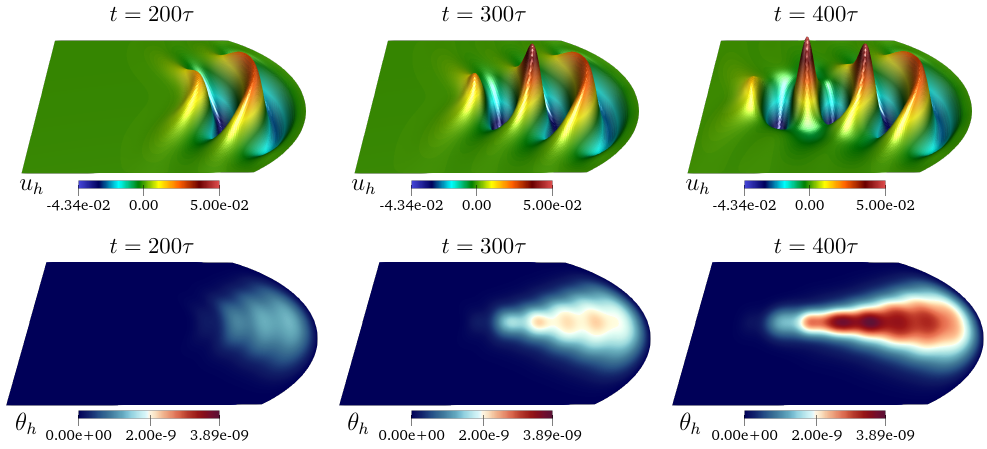}
	\caption{Example~3: Snapshots of the discrete acoustic velocity potential $u_h = u_h(x_1,x_2)$ (first row) and discrete temperature $\theta_h = \theta_h(x_1,x_2)$ (second row), at three time points, computed with the Kuznetsov wave equation in \eqref{coupled_problem} with zero initial conditions and source term $f$ given by \eqref{eq:source:ex3}. The time step is $\tau = 10^{-7}\,\rm s$ and $\poldeg = 1$, and the time approximation is by the BDF2 method. \label{fig:pressure:temp:ex3}}
\end{figure}

\appendix


\section{Postponed proofs}
\label{sec:appendix}

In this section, we finally give the proofs from Sections~\ref{Sec: Ritz proj} and \ref{Sec: estimates semidiscrete heat} on the bound of the Ritz projection and the defects from the error analysis of the heat part.

\begin{proof}[Proof of Lemma~\ref{Lemma: est Ritz product_v2}]
	We add and subtract $\mu(\psi_h) \, \phih$ to obtain
	\[
	\norm{\projRitz [ \mu(\psi_h) \, \phih ]}_{\Ltwo} \leq \norm{ \mu(\psi_h) \, \phih}_{\Ltwo}+\norm{(\Id - \projRitz)  [ \mu(\psi_h) \, \phih ]}_{\Ltwo},
	\]
	which yields
	\[
	\norm{\projRitz  [ \mu(\psi_h) \, \phih ]}_{\Ltwo} \leq 	\norm{\mu(\psi_h)}_{\Linf} \norm{ \phih }_{\Ltwo} +\norm{(\Id - \projRitz)  [ \mu(\psi_h) \, \phih ] }_{\Ltwo}.
	\]
	To estimate the second term on the right-hand side, we note that
	
	{by an Aubin-Nitsche trick}	%
	\begin{equation}
		\norm{(\Id - \projRitz)  [ \mu(\psi_h) \, \phih ] }_{\Ltwo}
		\lesssim
		h \norm{(\Id - \projRitz) [ \mu(\psi_h) \, \phih ] }_{\Hone}
		\lesssim
		h \norm{(\Id - \Ih) [ \mu(\psi_h) \, \phih ] }_{\Hone}.
	\end{equation}
	Using the standard interpolation estimate, we obtain
	\begin{align}
		h^2 \norm{(\Id - \Ih) (\mu(\psi_h) \, \phih)}_{\Hone}^2
		&\leq C
		\sum_K \bigl( h^{\poldeg+1} | \mu(\psi_h) \, \phih |_{H^{\poldeg+1}(K)} \bigr)^2 .
	\end{align}
	We use that on each cell $\phih$ is a polynomial of degree $\poldeg$, together with the inverse estimate in  \cite[Lemma (4.5.3)]{brenner2008mathematical},
	to derive
	\begin{align}
		h^{\poldeg+1} | \mu(\psi_h) \, \phih |_{H^{\poldeg+1}(K)}
		& \lesssim
		h^{\poldeg+1}  \sum_{j=1}^{\poldeg+1}
		| \mu(\psi_h)  |_{W^{j,\infty}(K)}
		|  \phih |_{H^{\poldeg+1-j}(K)}
		\lesssim
	{	\|  \phih \|_{L^2(K)}}
		\sum_{j=1}^{\poldeg+1}
		h^{j}
		| \mu(\psi_h)  |_{W^{j,\infty}(K)} .
	\end{align}
	The observation that
	\begin{align}
		h^{j} | \mu(\psi_h)  |_{W^{j,\infty}(K)}
		\lesssim
		\sum_{\ell=1}^{j} h^\ell \norm{\nabla \psi_h}_{L^\infty(K)}^\ell
	\end{align}
	allows us to conclude
	\begin{align}
		h^2 \norm{(\Id - \Ih) (\mu(\psi_h) \, \phih)}_{\Hone}^2
		&\leq C
		\sum_K
		\norm{  \phih }_{L^2(K)}^2
		\sum_{j=1}^{\poldeg+1}
		h^{2j} \norm{\nabla \psi_h}_{L^\infty(K)}^{2j} 
		%
		\leq
		C	\norm{\phih}_{\Ltwo}^2
		\sum_{j=1}^{\poldeg+1}
		\bigl( h \norm{\nabla \psi_h}_{L^\infty(\Omega)}\bigr)^{2j}.
	\end{align}
	Finally, we employ
	\begin{equation}
		h  \norm{\nabla \psi_h }_{\Linfty(\Omega)} \lesssim 	h^{\delta / (\dimension+\delta )} \norm{\nabla \psi_h }_{\Wonedelta}
	\end{equation}
	to obtain the claim.
\end{proof}

Next, we turn to the defect defined in \eqref{eq:def_delta_theta}.

\begin{proof}[Proof of Lemma~\ref{lemma: est defect theta}]
	(a)	We have the rewriting
	\begin{equation}
		\begin{aligned}
			(\wtdeltatheta, \phih)_{L^2}
			=
			\begin{multlined}[t]
				(\Rhthetat-\thetat, \phih)_{L^2}+ \nu(\Rhtheta-\theta, \phih)_{L^2}
				+ ((\alpha(\theta)-\alpha(\Rhtheta)) (\zeta_1 u^2+\zeta_2 \ut^2), \phih)_{L^2}\\
				+( \alpha(\Rhtheta) (\zeta_1 (u-\Rhu)(u+\Rhu) + \zeta_2 (\ut-\Rhut)(\ut+\Rhut)), \phih)_{L^2}
			\end{multlined}
		\end{aligned}
	\end{equation}
	The statement then follows by the approximation properties of the Ritz projection.
	
	(b) For the time derivative of the defect, we note that
	\begin{equation}
		\begin{aligned}
			(\delt \wtdeltatheta, \phih)_{L^2}
			=
			\begin{multlined}[t] (\Rhthetatt-\thetatt, \phih)_{L^2}+ \nu(\Rhthetat-\thetat, \phih)_{L^2}\\
				+ (\alpha'(\theta)\thetat (\zeta_1 u^2+\zeta_2 \ut^2)-\alpha'(\Rhtheta)\Rhthetat (\zeta_1 (\Rhu)^2+\zeta_2 (\Rhut)^2), \phih)_{L^2}\\
				- 2(\alpha(\theta) \ut (\zeta_1 u +\zeta_2 \utt)-\alpha(\Rhtheta) \Rhut (\zeta_1 \Rhu +\zeta_2 \Rhutt), \phih)_{L^2}.
			\end{multlined}
		\end{aligned}
	\end{equation}
	We have the following rewriting:
	\begin{equation} \label{term1}
		\begin{aligned}
			& (\alpha'(\theta)\thetat (\zeta_1 u^2+\zeta_2 \ut^2)-\alpha'(\Rhtheta)\Rhthetat (\zeta_1 (\Rhu)^2+\zeta_2 (\Rhut)^2), \phih)_{L^2}\\
			=&\,\begin{multlined}[t] 	((\alpha'(\theta)-\alpha'(\Rhtheta))\thetat (\zeta_1 u^2+\zeta_2 \ut^2), \phih)_{L^2}\\
				+(\alpha'(\Rhtheta)(\thetat-\Rhthetat)(\zeta_1 (\Rhu)^2+\zeta_2 (\Rhut)^2), \phih)_{L^2}\\
				+\big(\alpha'(\Rhtheta)\Rhthetat(\zeta_1 (\uh-\Rhu)(\uh+\Rhu)\\ +\zeta_2 (\uht-\Rhut)(\uht+\Rhut)), \phih\big)_{L^2}.
			\end{multlined}
		\end{aligned}
	\end{equation}
	On account of the local Lipschitz continuity of $\alpha'$, we have
	\begin{equation}
		\begin{aligned}
			&\|(\alpha'(\theta)-\alpha'(\Rhtheta))\thetat (\zeta_1 u^2+\zeta_2 \ut^2) \|_{\LtwoLtwo}\\
			\lesssim &\,\|\alpha'(\theta)-\alpha'(\Rhtheta)\|_{\LinfLtwo}\|\thetat\|_{\LtwoLinf}(\|u\|^2_{\LinfLinf}+\|\ut\|^2_{\LinfLinf}) \\
			\lesssim&\, \|\theta-\Rhtheta\|_{\LinfLtwo}\|\thetat\|_{\LtwoLinf}(\|u\|^2_{\LinfLinf}+\|\ut\|^2_{\LinfLinf})
		\end{aligned}
	\end{equation}
	and we can estimate the other terms on the right-hand side of \eqref{term1} in an analogous manner. Similarly, we have the rewriting
	\begin{equation}
		\begin{aligned}
			&(\alpha(\theta) \ut (\zeta_1 u +\zeta_2 \utt)-\alpha(\Rhtheta) \Rhut (\zeta_1 \Rhu +\zeta_2 \Rhutt), \phih)_{L^2} \\
			=&\,\begin{multlined}[t]
				((\alpha(\theta)-\alpha(\Rhtheta)) \ut (\zeta_1 u +\zeta_2 \utt), \phih)_{L^2}
				+(\alpha(\Rhtheta)(\ut-\Rhut) (\zeta_1 u +\zeta_2 \utt), \phih)_{L^2}
				\\
				+(\alpha(\Rhtheta)\Rhut (\zeta_1 (u-\Rhu) +\zeta_2 (\utt-\Rhutt)), \phih)_{L^2}
			\end{multlined}
		\end{aligned}
	\end{equation}
	and we can proceed as above to arrive at the claim.
\end{proof}

The last estimate deals with the right-hand side in the error equation of the heat problem.

\begin{proof}[Proof of Lemma~\ref{lemma: est calFhtheta}]
	We use the following rewriting:
	\begin{equation} \label{calFhTheta}
		\begin{aligned}
			(\calFhthetat, \phih)_{L^2} 
			=
			  \begin{multlined}[t]
				(\wtdeltatheta, \phih)_{L^2} + ((\alpha(\Rhtheta)-\alpha(\thetah)) (\zeta_1 (\Rhu)^2+\zeta_2 (\Rhut)^2)), \phih)_{L^2}\\ +(\alpha(\thetah) (\zeta_1 (\Rhu-\uh)(\Rhu+\uh)+\zeta_2 (\Rhut-\uht)(\Rhut+\uht)), \phih)_{L^2}.
			\end{multlined}
		\end{aligned}
	\end{equation}
	We further have
	\begin{equation}
		\begin{aligned}
			&	\|	(\alpha(\Rhtheta)-\alpha(\thetah)) (\zeta_1 (\Rhu)^2+\zeta_2 (\Rhut)^2))\|_{\LtwotLtwo}\\
			\lesssim&\, \|\errhtheta\|_{\LtwotLtwo}\|\zeta_1 (\Rhu)^2+\zeta_2 (\Rhut)^2\|_{\LinftLinf},
		\end{aligned}
	\end{equation}
	where we have relied on
	\[
	\|\alpha(\Rhtheta)-\alpha(\thetah)\|_{\LtwotLtwo} \lesssim \|\Rhtheta-\thetah\|_{\LtwotLtwo}
	\]
	for $\|\thetah\|_{\LinftLinf} \lesssim 1$. Similarly,
	\begin{equation}
		\begin{aligned}
			&\|\alpha(\thetah) (\zeta_1 (\Rhu-\uh)(\Rhu+\uh)+\zeta_2 (\Rhut-\uht)(\Rhut+\uht))\|_{\LtwotLtwo}\\
			\lesssim&\, \|\alpha(\thetah)\|_{\LinftLinf}(\|\errhu\|_{\LtwotLtwo}+\|\errhut\|_{\LtwotLtwo})\big(\|\uh\|_{\HonetLtwo}
			+\|\Rhu\|_{\HonetLtwo}\big).
		\end{aligned}
	\end{equation}
	The claim then follows by Lemma~\ref{lemma: est defect theta} and the properties of the Ritz projection.
	
	We now tackle the estimate of $\delt \calFhtheta$.
	Note that
	\begin{equation} \label{calFhTheta timediff}
		\begin{aligned}
			(\delt \calFhtheta, \phih)_{L^2}
			=\, \begin{multlined}[t]
				(\delt	\wtdeltatheta, \phih)_{L^2} + \calI,
			\end{multlined}
		\end{aligned}
	\end{equation}
	where
	\begin{equation} \label{calFhTheta timediff}
		\begin{aligned}
			\calI
			\coloneqq &\, \begin{multlined}[t]
				\big(\alpha'(\Rhtheta)\Rhthetat (\zeta_1 (\Rhu)^2
				 +\zeta_2 (\Rhut)^2)+2\alpha(\Rhtheta) \Rhut (\zeta_1 \Rhu +\zeta_2 \Rhutt), \phih\big)_{L^2}\\
				- (\alpha'(\thetah)\thetaht (\zeta_1 \uh^2+\zeta_2 (\uht)^2)+2\alpha(\thetah) \uht (\zeta_1 \uh +\zeta_2 \uhtt), \phih)_{L^2}.
			\end{multlined}
		\end{aligned}
	\end{equation}
	We can rewrite $\calI$ as follows:
	\begin{equation}
		\begin{aligned}
			\calI
			=&\, \begin{multlined}[t]
				(\alpha'(\Rhtheta)\Rhthetat (\zeta_1 (\Rhu)^2+\zeta_2 (\Rhut)^2)-\alpha'(\thetah)\thetaht (\zeta_1 \uh^2+\zeta_2 (\uht)^2), \phih)_{L^2} \\
				+2(\alpha(\Rhtheta) \Rhut (\zeta_1 \Rhu +\zeta_2 \Rhutt)-\alpha(\thetah) \uht (\zeta_1 \uh +\zeta_2 \uhtt), \phih)_{L^2}.
			\end{multlined}
		\end{aligned}
	\end{equation}
	We next further rewrite the two difference terms. First,
	\begin{equation}
		\begin{aligned}
			&(\alpha'(\Rhtheta)\Rhthetat (\zeta_1 (\Rhu)^2+\zeta_2 (\Rhut)^2)-\alpha'(\thetah)\thetaht (\zeta_1 \uh^2+\zeta_2 (\uht)^2), \phih)_{L^2} \\
			=&\, \begin{multlined}[t] ((\alpha'(\Rhtheta)-\alpha'(\thetah))\Rhthetat (\zeta_1 (\Rhu)^2+\zeta_2 (\Rhut)^2), \phih)_{L^2}		\\
				+ (\alpha'(\thetah)(\Rhthetat-\thetaht)  (\zeta_1 (\Rhu)^2+\zeta_2 (\Rhut)^2), \phih)_{L^2}	\\
				+\zeta_1 (\alpha'(\thetah)\thetaht  (\Rhu-\uh)(\Rhu + \uh), \phih)_{L^2}	\\
				+ \zeta_2(\alpha'(\thetah)\thetaht  (\Rhut-\uht)(\Rhut+\uht)), \phih)_{L^2}
				:=\, \sum_{i=1}^{4} (\calI^i, \phih)_{L^2}.				\end{multlined}
		\end{aligned}
	\end{equation}
	We have
	\begin{equation}
		\begin{aligned}
			\|	\calI^1\|_{\LtwotLtwo}
			&\,
			 \|(\alpha'(\Rhtheta)-\alpha'(\thetah))\Rhthetat (\zeta_1 (\Rhu)^2+\zeta_2 (\Rhut)^2)\|_{\LtwotLtwo}\\
			\lesssim&\, \|\alpha'(\Rhtheta)-\alpha'(\thetah)\|_{\LtwotLtwo}\|\Rhthetat\|_{\LinfLinf} \Big(\|\Rhu\|_{\LinfLinf}^2\\
			&\hspace{3cm}+\|\Rhut\|^2_{\LinfLinf}\Big)\\
			\lesssim&\, \|\theta\|_{\Xtheta} \|u\|^2_{\Xu}
			\|\errhtheta\|_{\LtwotLtwo}	.
		\end{aligned}
	\end{equation}
	Secondly,
	\begin{equation}
		\begin{aligned}
			& \|	\calI^2\|_{\LtwotLtwo}= \|\alpha'(\thetah)(\Rhthetat-\thetaht)  (\zeta_1 (\Rhu)^2+\zeta_2 (\Rhut)^2)\|_{\LtwotLtwo}\\
			\lesssim&\,\|\alpha'(\thetah)\|_{\LinftLinf}\|\errhthetat\|_{\LtwotLtwo} \Big(\|\Rhu\|_{\LinfLinf}^2+\|\Rhut\|^2_{\LinfLinf}\Big).
		\end{aligned}
	\end{equation}
	Thirdly,
	\begin{equation}
		\begin{aligned}
			\|\calI^3\|_{\LtwotLtwo}
			=&\, \|\zeta_1 \alpha'(\thetah)\thetaht  (\Rhu-\uh)(\Rhu + \uh)\|_{\LtwotLtwo} \\
			\lesssim&\,	 \|\thetaht\|_{\LinftLp{3}}
			\|\errhu\|_{\LtwotLp{6} }
			\lesssim
			\|\thetaht\|_{\LinftLp{3}}
			\|\nabla \errhu \|_{\LtwotLtwo }	.
		\end{aligned}
	\end{equation}
	Next,
	\begin{equation}
		\begin{aligned}
			 \|\calI^4\|_{\LtwotLtwo}
			&=\|\zeta_2 \alpha'(\thetah)\thetaht  (\Rhut-\uht)(\Rhut+\uht)\|_{\LtwotLtwo}\\
			&\lesssim\, \|\thetaht\|_{\LinftLp{3}}
			\|\errhut\|_{\LtwotLp{6} }
			\lesssim
			\|\thetaht\|_{\LinftLp{3}}
			\|\nabla \errhut \|_{\LtwotLtwo }.
		\end{aligned}
	\end{equation}
	Similarly, we have the following rewriting:
	\begin{equation}
		\begin{aligned}
			&	2(\alpha(\Rhtheta) \Rhut (\zeta_1 \Rhu +\zeta_2 \Rhutt)-\alpha(\thetah) \uht (\zeta_1 \uh +\zeta_2 \uhtt), \phih)_{L^2} \\
			=&\, \begin{multlined}[t]
				2((\alpha(\Rhtheta)-\alpha(\thetah)) \Rhut (\zeta_1 \Rhu +\zeta_2 \Rhutt), \phih)_{L^2} \\
				+	2(\alpha(\thetah) (\Rhut-\uht) (\zeta_1 \Rhu +\zeta_2 \Rhutt), \phih)_{L^2}\\
				+	2(\alpha(\thetah)\uht (\zeta_1 (\Rhu-\uh) +\zeta_2 (\Rhutt-\uhtt)), \phih)_{L^2}
			\end{multlined}	\\
			:=&\, \sum_{i=5}^{7} (\calI^i, \phih)_{L^2}.
		\end{aligned}
	\end{equation}
	Then
	\begin{equation}
		\begin{aligned}
			&\|\calI^5\|_{\LtwotLtwo}
			= \|2(\alpha(\Rhtheta)-\alpha(\thetah)) \Rhut (\zeta_1 \Rhu +\zeta_2 \Rhutt)\|_{\LtwotLtwo} \\
			\lesssim&\, \|\errhtheta\|_{\LtwotLinf}
			\|\Rhut\|_{\LinftLinf}(\|\Rhu\|_{\LtwoLtwo}+\|\Rhutt\|_{\LtwoLtwo}).
		\end{aligned}
	\end{equation}
	Next,
	\begin{equation}
		\begin{aligned}
			&\|\calI^6\|_{\LtwotLtwo}
			= \|2\alpha(\thetah) (\Rhut-\uht) (\zeta_1 \Rhu +\zeta_2 \Rhutt)\|_{\LtwotLtwo} \\
			\lesssim&\,\|\alpha(\thetah)\|_{\LinftLinf} \|\errhthetat\|_{\LinftLsix}(\|\Rhu\|_{\LtwoLthree}+\|\Rhutt\|_{\LtwoLthree}).
		\end{aligned}
	\end{equation}
	Finally,
	\begin{equation}
		\begin{aligned}
			&\|\calI^7\|_{\LtwotLtwo}
			= \|	2\alpha(\thetah)\uht (\zeta_1 (\Rhu-\uh) +\zeta_2 (\Rhutt-\uhtt))\|_{\LtwotLtwo} \\
			\lesssim&\,
			\|\alpha(\thetah) \, \uht\|_{\LinftLinf}(\|\errhu\|_{\LtwotLtwo}
			+
			\|\errhutt\|_{\LtwotLtwo}).
		\end{aligned}
	\end{equation}
	Combining the derived bounds yields the desired result.
\end{proof}


\section*{Acknowledgments} \label{Sec: funding}
{\bf Funding.} J.C. is supported by ANID through Fondecyt project 3230553.
B.D. is funded by the Deutsche Forschungsgemeinschaft (DFG, German Research Foundation) -- Project-ID 258734477 -- SFB 1173. The work of V.N.  was partially supported by the  Dutch Research Council (NWO) under the grant OCENW.M.23.371.
%


\bibliography{references}{}

@misc{barattaDOLFINxNextGeneration2023,
	title = {{{DOLFINx}}: {{The}} next Generation {{FEniCS}} Problem Solving Environment},
	shorttitle = {{{DOLFINx}}},
	author = {Baratta, I.~A. and Dean, J.~P. and Dokken, J.~S. and Habera, M. and Hale, J.~S. and Richardson, C.~N. and Rognes, M.~E. and Scroggs, M.~W. and Sime, N. and Wells, G.~N.},
	year = {2023},
	month = dec,
	publisher = {Zenodo},
	doi = {10.5281/zenodo.10447666},
	Note = {\url{https://doi.org/10.5281/zenodo.10447666}},	
}

@article {BalD89,
	AUTHOR = {Bales, L. A. and Dougalis, V. A.},
	TITLE = {Cosine methods for nonlinear second-order hyperbolic
	equations},
	JOURNAL = {Math. Comp.},
	FJOURNAL = {Mathematics of Computation},
	VOLUME = {52},
	YEAR = {1989},
	NUMBER = {186},
	PAGES = {299--319, S15--S33},
	ISSN = {0025-5718,1088-6842},
	MRCLASS = {65M60},
	MRNUMBER = {955747},
	MRREVIEWER = {B.\ M.\ Herbst},
	DOI = {10.2307/2008469},
	URL = {https://doi.org/10.2307/2008469},
}

@article {KovLL16,
	AUTHOR = {Kov\'acs, B. and Li, B. and Lubich, Ch.},
	TITLE = {A-stable time discretizations preserve maximal parabolic
	regularity},
	JOURNAL = {SIAM J. Numer. Anal.},
	FJOURNAL = {SIAM Journal on Numerical Analysis},
	VOLUME = {54},
	YEAR = {2016},
	NUMBER = {6},
	PAGES = {3600--3624},
	ISSN = {0036-1429,1095-7170},
	MRCLASS = {65M20 (65J08 65L04 65M12)},
	MRNUMBER = {3582825},
	MRREVIEWER = {B\"ulent\ Karas\"ozen},
	DOI = {10.1137/15M1040918},
	URL = {https://doi.org/10.1137/15M1040918},
}

@article{mizohata1993global,
AUTHOR = {Mizohata, K. and Ukai, S.},
TITLE = {The global existence of small amplitude solutions to the
nonlinear acoustic wave equation},
JOURNAL = {J. Math. Kyoto Univ.},
FJOURNAL = {Journal of Mathematics of Kyoto University},
VOLUME = {33},
YEAR = {1993},
NUMBER = {2},
PAGES = {505--522},
ISSN = {0023-608X},
MRCLASS = {35L70 (35B35 35Q35 76Q05)},
MRNUMBER = {1231754},
MRREVIEWER = {Mitsuhiro\ Nakao},
DOI = {10.1215/kjm/1250519270},
URL = {https://doi.org/10.1215/kjm/1250519270},
}

@article {LiS17,
	AUTHOR = {Li, B. and Sun, W.},
	TITLE = {Maximal {$L^p$} analysis of finite element solutions for
	parabolic equations with nonsmooth coefficients in convex
	polyhedra},
	JOURNAL = {Math. Comp.},
	FJOURNAL = {Mathematics of Computation},
	VOLUME = {86},
	YEAR = {2017},
	NUMBER = {305},
	PAGES = {1071--1102},
	ISSN = {0025-5718,1088-6842},
	MRCLASS = {65M60 (35K20 65M12)},
	MRNUMBER = {3614012},
	MRREVIEWER = {B\"ulent\ Karas\"ozen},
	DOI = {10.1090/mcom/3133},
	URL = {https://doi.org/10.1090/mcom/3133},
}

@article{shevchenko2012multi,
	AUTHOR = {Shevchenko, I. and Kaltenbacher, M. and Wohlmuth,
	B.},
	TITLE = {A multi-time stepping integration method for the ultrasound
	heating problem},
	JOURNAL = {ZAMM Z. Angew. Math. Mech.},
	FJOURNAL = {ZAMM. Zeitschrift f\"ur Angewandte Mathematik und Mechanik.
	Journal of Applied Mathematics and Mechanics},
	VOLUME = {92},
	YEAR = {2012},
	NUMBER = {11-12},
	PAGES = {869--881},
	ISSN = {0044-2267,1521-4001},
	MRCLASS = {74L15 (74F05 74F10 74H15 92C99)},
	MRNUMBER = {2994541},
	DOI = {10.1002/zamm.201200023},
	URL = {https://doi.org/10.1002/zamm.201200023},
}

@article{hallaj2001simulations,
	title={Simulations of the thermo-acoustic lens effect during focused ultrasound surgery},
	author={Hallaj, I. M. and Cleveland, R. O. and Hynynen, K.},
	journal={The Journal of the Acoustical Society of America},
	volume={109},
	number={5},
	pages={2245--2253},
	year={2001},
	publisher={Acoustical Society of America},
	Note = {\url{https://doi.org/10.1121/1.1360239 }},
}

@article{connor2002bio,
	title={Bio-acoustic thermal lensing and nonlinear propagation in focused ultrasound surgery using large focal spots: a parametric study},
	author={Connor, C. W. and Hynynen, K.},
	journal={Physics in Medicine \& Biology},
	volume={47},
	number={11},
	pages={1911},
	year={2002},
	publisher={IOP Publishing},
	Note = {\url{https://doi.org/10.1088/0031-9155/47/11/306}},
	}

@article{bilaniuk1993speed,
	title={Speed of sound in pure water as a function of temperature},
	author={Bilaniuk, N. and Wong, G. SK},
	journal={The Journal of the Acoustical Society of America},
	volume={93},
	number={3},
	pages={1609--1612},
	year={1993},
	publisher={Acoustical Society of America},
	Note = {\url{https://doi.org/10.1121/1.406819}},	
}

@article{hochbruck2022error,
    AUTHOR = {Hochbruck, M. and Maier, B.},
TITLE = {Error analysis for space discretizations of quasilinear
wave-type equations},
JOURNAL = {IMA J. Numer. Anal.},
FJOURNAL = {IMA Journal of Numerical Analysis},
VOLUME = {42},
YEAR = {2022},
NUMBER = {3},
PAGES = {1963--1990},
ISSN = {0272-4979,1464-3642},
MRCLASS = {65M60 (65J05 65M12 65M15)},
MRNUMBER = {4454914},
DOI = {10.1093/imanum/drab073},
URL = {https://doi.org/10.1093/imanum/drab073},
}

@article{wilke2023p,
	AUTHOR = {Wilke, M.},
	TITLE = {{$L_p$}--{$L_q$}-theory for a quasilinear non-isothermal
	{W}estervelt equation},
	JOURNAL = {Appl. Math. Optim.},
	FJOURNAL = {Applied Mathematics and Optimization},
	VOLUME = {88},
	YEAR = {2023},
	NUMBER = {1},
	PAGES = {Paper No. 13, 24},
	ISSN = {0095-4616,1432-0606},
	MRCLASS = {35L72 (76Q05)},
	MRNUMBER = {4573401},
	DOI = {10.1007/s00245-023-09987-z},
	URL = {https://doi.org/10.1007/s00245-023-09987-z},
}

@article{nikolic2022local,
	AUTHOR = {Nikoli\'c, V. and Said-Houari, B.},
	TITLE = {Local well-posedness of a coupled {W}estervelt-{P}ennes model
	of nonlinear ultrasonic heating},
	JOURNAL = {Nonlinearity},
	FJOURNAL = {Nonlinearity},
	VOLUME = {35},
	YEAR = {2022},
	NUMBER = {11},
	PAGES = {5749--5780},
	ISSN = {0951-7715,1361-6544},
	MRCLASS = {35L70 (35K05)},
	MRNUMBER = {4500880},
}

@article{nikolic2022westervelt,
	AUTHOR = {Nikoli\'c, V. and Said-Houari, B.},
	TITLE = {The {W}estervelt-{P}ennes model of nonlinear thermoacoustics:
	global solvability and asymptotic behavior},
	JOURNAL = {J. Differential Equations},
	FJOURNAL = {Journal of Differential Equations},
	VOLUME = {336},
	YEAR = {2022},
	PAGES = {628--653},
	ISSN = {0022-0396,1090-2732},
	MRCLASS = {80A19 (35Q79)},
	MRNUMBER = {4461663},
	DOI = {10.1016/j.jde.2022.07.031},
	URL = {https://doi.org/10.1016/j.jde.2022.07.031},
}

@article{dorich2024robust,
	title={Robust fully discrete error bounds for the {K}uznetsov equation in the inviscid limit},
	author={D{\"o}rich, B. and Nikoli{\'c}, V.},
	journal={arXiv preprint arXiv:2401.06492},
	year={2024},
	Note = {\url{https://doi.org/10.48550/arXiv.2401.06492}},
	}

@article {Doe24,
	AUTHOR = {D\"{o}rich, B.},
	TITLE = {Strong norm error bounds for quasilinear wave equations 
	under weak {CFL}-type conditions},
	JOURNAL = {Found. Comput. Math.},
	FJOURNAL = {Foundations of Computational Mathematics. The Journal of the
	Society for the Foundations of Computational Mathematics},
	YEAR = {2024},
	MONTH = {feb},
	PAGES = {48pp.},
	ISSN = {1615-3375},
	DOI = {10.1007/s10208-024-09639-w},
	URL = {https://doi.org/10.1007/s10208-024-09639-w},
	FILESURL = {https://doi.org/10.35097/1792},
	NOTE = {\url{https://doi.org/10.1007/s10208-024-09639-w}},
	
}

@article{6,
	title={Maximal Lp error analysis of {FEMS} for nonlinear parabolic equations with nonsmooth coefficients},
	author={Li, Buyang and Sun, Weiwei},
	journal={Int. J. Numer. Anal. Model},
	volume={14},
	number={4-5},
	pages={670--687},
	year={2017}
}

@article{van2011feasibility,
	title={A feasibility study for non-invasive thermometry using non-linear ultrasound},
	author={van Dongen, K W. A. and Verweij, M. D.},
	journal={International Journal of Hyperthermia},
	volume={27},
	number={6},
	pages={612--624},
	year={2011},
	publisher={Taylor \& Francis},
	Note = {\url{https://doi.org/10.3109/02656736.2011.599357}},
}

@book{pierce2019acoustics,
	title={Acoustics: an introduction to its physical principles and applications},
	author={Pierce, A. D.},
	year={2019},
	publisher={Springer},
	Note = {\url{https://doi.org/10.1007/978-3-030-11214-1}},
}

@article {Mak93,
	AUTHOR = {Makridakis, C. G.},
	TITLE = {Finite element approximations of nonlinear elastic waves},
	JOURNAL = {Math. Comp.},
	FJOURNAL = {Mathematics of Computation},
	VOLUME = {61},
	YEAR = {1993},
	NUMBER = {204},
	PAGES = {569--594},
	ISSN = {0025-5718,1088-6842},
	MRCLASS = {73V05 (65M60 73D15 73D35)},
	MRNUMBER = {1195426},
	MRREVIEWER = {I.\ N.\ Molchanov},
	DOI = {10.2307/2153241},
	URL = {https://doi.org/10.2307/2153241},
}

@book {brenner2008mathematical,
	AUTHOR = {Brenner, S.~C. and Scott, L.~R.},
	TITLE = {The mathematical theory of finite element methods},
	SERIES = {Texts in Applied Mathematics},
	VOLUME = {15},
	EDITION = {Third},
	PUBLISHER = {Springer, New York},
	YEAR = {2008},
	PAGES = {xviii+397},
	ISBN = {978-0-387-75933-3},
	MRCLASS = {65-01 (65-02)},
	MRNUMBER = {2373954 (2008m:65001)},
	DOI = {10.1007/978-0-387-75934-0},
	URL = {http://dx.doi.org/10.1007/978-0-387-75934-0},
}

@article {SuzF86,
	AUTHOR = {Suzuki, T. and Fujita, H.},
	TITLE = {A remark on the {$L^\infty$} bounds of the {R}itz operator
	associated with a finite element approximation},
	JOURNAL = {Numer. Math.},
	FJOURNAL = {Numerische Mathematik},
	VOLUME = {49},
	YEAR = {1986},
	NUMBER = {5},
	PAGES = {529--544},
	ISSN = {0029-599X},
	MRCLASS = {65N30},
	MRNUMBER = {856383},
	MRREVIEWER = {Dennis C. Jespersen},
	URL = {https://doi.org/10.1007/BF01389704},
}

@book {FujSS01,
	AUTHOR = {Fujita, H. and Saito, N. and Suzuki, T.},
	TITLE = {Operator theory and numerical methods},
	SERIES = {Studies in Mathematics and its Applications},
	VOLUME = {30},
	PUBLISHER = {North-Holland Publishing Co., Amsterdam},
	YEAR = {2001},
	PAGES = {viii+309},
	ISBN = {0-444-50474-5},
	MRCLASS = {65Nxx (46N40 47N40 65Mxx)},
	MRNUMBER = {1854280},
	MRREVIEWER = {Charles W. Groetsch},
}

@article{ter2016hifu,
	title={{HIFU} tissue ablation: concept and devices},
	author={Ter Haar, G.},
	journal={Therapeutic ultrasound},
	pages={3--20},
	year={2016},
	publisher={Springer},
	Note = {\url{https://doi.org/https://doi.org/10.1007/978-3-319-22536-4_1}},
}

@incollection{ellens2023high,
	title={High-intensity focused ultrasound for medical therapy},
	author={Ellens, N.P.K. and Hynynen, K.},
	booktitle={Power Ultrasonics},
	pages={577--598},
	year={2023},
	publisher={Elsevier},
	Note = {\url{https://doi.org/10.1016/B978-1-78242-028-6.00022-3}},
}
\bibliographystyle{amsplain}

\end{document}